\documentclass[12pt]{amsart}
\usepackage[margin=1.2in]{geometry}
\usepackage{graphicx,latexsym}
\usepackage{comment}
\usepackage{tikz}
\usepackage{amsfonts, amssymb, amsmath, amsthm, bm, hyperref}

\newcommand{\N}{\mathbb{N}}
\newcommand{\Z}{\mathbb{Z}}
\newcommand{\R}{\mathbb{R}}
\newcommand{\C}{\mathbb{C}}

\newcommand{\dx}{{\rm d}x }
\newcommand{\du}{{\rm d}u }

\newcommand{\dss}{{\rm d}s }

\newcommand{\dt}{{\rm d}t }
\newcommand{\dxi}{{\rm d}\xi }

\newcommand{\dy}{{\rm d}y }

\newtheorem{theorem}{Theorem}[section]
\newtheorem{proposition}[theorem]{Proposition}

\newtheorem{lemma}[theorem]{Lemma}
\newtheorem{corollary}[theorem]{Corollary}

\theoremstyle{definition}
\newtheorem{definition}[theorem]{Definition}
\newtheorem{example} [theorem]{Example}

\theoremstyle{remark}
\newtheorem{remark}[theorem]{Remark}

\numberwithin{equation}{section}
\begin{document}
\title[Topological properties of convolutor spaces]{Topological properties of convolutor spaces via the short-time Fourier transform}
\author[A. Debrouwere]{Andreas Debrouwere}
\address{Department of Mathematics: Analysis, Logic and Discrete Mathematics, Ghent University, Krijgslaan 281, 9000 Gent, Belgium}
\email{Andreas.Debrouwere@UGent.be}
\thanks{A. Debrouwere was supported by  FWO-Vlaanderen, through the postdoctoral grant 12T0519N}

\author[J. Vindas]{Jasson Vindas}
\thanks{J. Vindas was supported by Ghent University, through the BOF-grants 01J11615 and 01J04017.}
\address{Department of Mathematics: Analysis, Logic and Discrete Mathematics, Ghent University, Krijgslaan 281, 9000 Gent, Belgium}
\email{Jasson.Vindas@UGent.be}

\subjclass[2010]{Primary 46A13, 46E10, 46F05; Secondary 46M18, 81S30.}
\keywords{Convolutor spaces; Short-time Fourier transform; Completeness of inductive limits; Gelfand-Shilov spaces.}
\begin{abstract}
We discuss the structural and topological properties of a general class of weighted $L^1$ convolutor spaces. Our theory simultaneously applies to weighted $\mathcal{D}'_{L^1}$ spaces as well as to convolutor spaces of the Gelfand-Shilov spaces $\mathcal{K}\{M_p\}$.  In particular, we characterize the sequences of weight functions $(M_p)_{p \in \N}$ for which the space of convolutors of $\mathcal{K}\{M_p\}$ is ultrabornological, thereby generalizing Grothendieck's classical result for the space $\mathcal{O}'_{C}$ of rapidly decreasing distributions. Our methods  lead to the first direct proof of the completeness of the space $\mathcal{O}_{C}$ of very slowly increasing smooth functions.
\end{abstract}

\maketitle

\section{Introduction}
In his fundamental book \cite{Schwartz}, Schwartz introduced the space $\mathcal{O}'_{C}(\mathbb{R}^{d})$ of rapidly decreasing distributions and showed that it is in fact the space of \emph{convolutors} of $\mathcal{S}(\mathbb{R}^{d})$; namely, a tempered distribution $f \in \mathcal{S}'(\R^d)$ belongs to $\mathcal{O}'_C(\R^d)$ if and only if $f \ast \varphi \in \mathcal{S}(\R^d)$ for all  $\varphi \in \mathcal{S}(\R^d)$ \cite[Thm.\ IX, p.\ 244]{Schwartz}. Moreover, for $f  \in \mathcal{O}'_C(\R^d)$ fixed, the mapping $\mathcal{S}(\R^d) \rightarrow \mathcal{S}(\R^d), \, \varphi \mapsto f \ast \varphi$ is continuous by the closed graph theorem. This characterization suggests to endow $\mathcal{O}'_C(\R^d)$ with the initial topology with respect to the mapping
 $$
\mathcal{O}'_C(\R^d) \rightarrow L_b(\mathcal{S}(\R^d), \mathcal{S}(\R^d)), \quad f \mapsto (\varphi \mapsto f \ast \varphi).
$$

This definition entails that $\mathcal{O}'_C(\R^d)$ is semi-reflexive and nuclear. A detailed study of the locally convex structure of $\mathcal{O}'_C(\R^d)$ was carried out by  Grothendieck in the last part of his doctoral thesis \cite{Grothendieck}.  He showed that this space is ultrabornological \cite[Chap.\ II, Thm.\ 16, p.\ 131]{Grothendieck} and that its strong dual is given by the space $\mathcal{O}_C(\R^d)$ of very slowly increasing smooth functions  \cite[Chap.\ II, p.\ 131]{Grothendieck}. Consequently,   $\mathcal{O}_C(\R^d)$ is complete and its strong dual is equal to $\mathcal{O}'_C(\R^d)$. See \cite{Bargetz, B-O, Larcher, L-W, Ortner} 
for modern works concerning these spaces. We would like to point out that ultrabornologicity is the crucial hypothesis for the application of many functional analytic tools such as De Wilde's open mapping and closed graph theorems \cite{M-V}, and the abstract Mittag-Leffler theorem \cite{Wengenroth}, which is very useful to solve surjectivity problems.

On the other hand, two classical results of Schwartz state that a distribution $f \in \mathcal{D}'(\R^d)$ belongs to the space $\mathcal{D}'_{L^1}(\R^d)$ of integrable distributions if and only if $f \ast \varphi \in L^1(\R^d)$ for all $\varphi \in \mathcal{D}(\R^d)$ \cite[Thm.\ XXV, p.\ 201]{Schwartz} and that the strong dual of  $\mathcal{D}'_{L^1}(\R^d)$ is given by $\mathcal{B}(\R^d)$  \cite[p.\ 203]{Schwartz}. Weighted $\mathcal{D}'_{L^1}(\R^d)$ spaces have been considered by Ortner and Wagner \cite{O-W} and, more recently, by Dimovski, Pilipovi\'c and the second author in the broader framework of distribution spaces associated to general translation-invariant Banach spaces \cite{D-P-V2015TIB}. The space $\mathcal{D}'_{L^1}(\R^d)$ and its weighted variants play an essential role in the convolution theory for distributions \cite{Schwartz-57,B-N-O,O-W-2,Wagner}. 
 
 The main goal of this article is to develop a unified approach towards these two types of results and, at the same time, considerably extend them. More precisely, let $\mathcal{W} =(w_N)_{N \in \N}$ be a (pointwise) increasing sequence of positive continuous functions. We define $L^1_{\mathcal{W}}(\R^d)$ as the Fr\'echet space consisting of all measurable functions $f$ on $\R^d$ such that $\|f\|_{L^1_{w_N}} := \|fw_N\|_{L^1} < \infty$ for all $N \in \N$ and 
$$
\mathcal{O}'_C(\mathcal{D},L^1_{\mathcal{W}}) := \{ f \in \mathcal{D}'(\R^d) \, : \, f \ast \varphi \in L^1_\mathcal{W}(\R^d) \mbox{ for all } \varphi \in \mathcal{D}(\R^d) \}.
$$
 As before, for $f \in \mathcal{O}'_C(\mathcal{D},L^1_{\mathcal{W}})$ fixed, the closed graph theorem implies that the mapping $\mathcal{D}(\R^d) \rightarrow L^1_{\mathcal{W}}(\R^d), \, \varphi \mapsto f \ast \varphi$ is continuous. We endow $\mathcal{O}'_C(\mathcal{D},L^1_{\mathcal{W}})$ with the initial topology with respect to the mapping
 $$
\mathcal{O}'_C(\mathcal{D},L^1_{\mathcal{W}}) \rightarrow L_b(\mathcal{D}(\R^d),L^1_{\mathcal{W}}(\R^d)), \quad f \mapsto (\varphi \mapsto f \ast \varphi) 
.$$
Next, for a positive continuous function $v$ on $\R^d$, we define $\mathcal{B}_{v}(\R^d)$ as the Fr\'echet space consisting of all $\varphi \in C^\infty(\R^d)$ such that $\| \varphi \|_{v, n} := \max_{|\alpha| \leq n} \| (\partial^\alpha \varphi) v\|_{L^\infty} < \infty$ for all $n \in \N$ and  $\dot{\mathcal{B}}_{v}(\R^d)$ as the closure of $\mathcal{D}(\R^d)$ in $\mathcal{B}_{v}(\R^d)$. Finally, we introduce the $(LF)$-spaces
$$
\mathcal{B}_{\mathcal{W}^\circ}(\R^d) := \varinjlim_{N \in \N} \mathcal{B}_{1/w_N}(\R^d), \qquad \dot{\mathcal{B}}_{\mathcal{W}^\circ}(\R^d) := \varinjlim_{N \in \N} \dot{\mathcal{B}}_{1/w_N}(\R^d).
$$
Since $\mathcal{D}(\R^d)$ is dense in  $\dot{\mathcal{B}}_{\mathcal{W}^\circ}(\R^d)$,  we may view its dual $(\dot{\mathcal{B}}_{\mathcal{W}^\circ}(\R^d))'$ as a space of distributions. The main results of the present article can then be stated as follows:
\begin{theorem}\label{thm-introduction}
Let $\mathcal{W} =(w_N)_{N \in \N}$ be an increasing sequence of positive continuous functions and suppose that
\begin{equation}
\forall N \in \N\, \exists M \geq N \, : \,  \sup_{x \in \R^d}\frac{w_{N}(x + \: \cdot \:)}{w_{M}(x)} \in L^\infty_{\operatorname{loc}}(\R^d).
\label{general-cond}
\end{equation}
Then, $(\dot{\mathcal{B}}_{\mathcal{W}^\circ}(\R^d))' = \mathcal{O}'_C(\mathcal{D},L^1_{\mathcal{W}})$ as sets. Moreover, the following statements are equivalent:
\begin{itemize}
\item[$(i)$] $\mathcal{W}$ satisfies the condition:
\begin{equation}
\forall N \in \N \, \exists M \geq N \, \forall K \geq M \, \exists \theta \in (0,1) \, \exists C > 0 \, \forall x \in \R^d: 
 \label{Omega-switched}
\end{equation}
$$
 {w_N(x)}^{1-\theta}{w_K(x)}^{\theta} \leq Cw_M(x). 
$$
\item[$(ii)$] $\mathcal{B}_{\mathcal{W}^\circ}(\R^d)$ is complete.
\item[$(iii)$] $\dot{\mathcal{B}}_{\mathcal{W}^\circ}(\R^d)$ is complete.
\item[$(iv)$] $\mathcal{O}'_C(\mathcal{D},L^1_{\mathcal{W}})$ is ultrabornological.
\item[$(v)$]  $(\dot{\mathcal{B}}_{\mathcal{W}^\circ}(\R^d))'_b = \mathcal{O}'_C(\mathcal{D},L^1_{\mathcal{W}})$.
\end{itemize}
In such a case, the strong bidual of $\dot{\mathcal{B}}_{\mathcal{W}^\circ}(\R^d)$ is given by $\mathcal{B}_{\mathcal{W}^\circ}(\R^d)$. 
\end{theorem}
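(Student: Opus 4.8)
The plan is to transport every space in the statement to phase space via the short-time Fourier transform (STFT) and then read off the topological assertions as standard facts about weighted $(LF)$- and $(PLB)$-spaces. Fix a window $\psi \in \mathcal{D}(\R^d)$ with $\int_{\R^d} |\psi(t)|^2 \dt = 1$, write $V_\psi f(x,\xi)$ for the STFT and $V_\psi^{\ast}$ for the synthesis operator, so that $V_\psi^{\ast} V_\psi$ is the identity on all spaces below. The first block of work is to prove clean STFT descriptions: $\varphi \in \mathcal{B}_{1/w_N}(\R^d)$ if and only if $\sup_{(x,\xi) \in \R^{2d}} |V_\psi\varphi(x,\xi)|\, w_N(x)(1 + |\xi|)^n < \infty$ for every $n \in \N$ (after replacing $w_N$, if necessary, by the locally bounded majorant supplied by \eqref{general-cond}); $\varphi \in \dot{\mathcal{B}}_{1/w_N}(\R^d)$ if and only if, in addition, $|V_\psi\varphi(x,\xi)|\, w_N(x)(1+|\xi|)^n \to 0$ as $|(x,\xi)| \to \infty$ for every $n$; and, dually, $f \in \mathcal{O}'_C(\mathcal{D},L^1_{\mathcal{W}})$ if and only if for every $N$ there is $n$ with $\int_{\R^{2d}} |V_\psi f(x,\xi)|\, w_N(x)(1+|\xi|)^{-n}\, \dx \dxi < \infty$, and the convolutor topology matches the natural one on this description. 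These characterizations rest on the standard interplay between convolution and the STFT, and condition \eqref{general-cond} is exactly what makes the translate $x \mapsto w_N(x + \,\cdot\,)/w_M(x)$ locally bounded, hence convolution continuous at the level of phase space. The reconstruction formula then realizes $\dot{\mathcal{B}}_{\mathcal{W}^\circ}(\R^d)$ as a complemented subspace of the weighted $(LF)$-space $\mathcal{V} := \varinjlim_N \mathcal{V}_N$, where $\mathcal{V}_N$ is the Fr\'echet space of continuous functions on $\R^{2d}$ that vanish at infinity relative to $w_N(x)(1+|\xi|)^n$ for every $n$, and $\mathcal{O}'_C(\mathcal{D},L^1_{\mathcal{W}})$ as a complemented subspace of the $(PLB)$-space $\varprojlim_N \varinjlim_n L^1\big(w_N(x)(1+|\xi|)^{-n}\big)$.

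Granting these models, the set-theoretic identity $(\dot{\mathcal{B}}_{\mathcal{W}^\circ}(\R^d))' = \mathcal{O}'_C(\mathcal{D},L^1_{\mathcal{W}})$ follows from the K\"othe-type duality between a countable inductive limit of weighted $C_0$-spaces and the associated weighted $L^1$-space, transported back by $V_\psi$ and $V_\psi^{\ast}$; the pairing $\langle f,\varphi\rangle$ corresponds, up to the normalization constant, to $\int_{\R^{2d}} V_\psi f\, \overline{V_\psi\varphi}\, \dx \dxi$. For the chain of equivalences the crux is $(i) \Leftrightarrow (ii)$, i.e.\ the completeness of the weighted $(LF)$-space $\mathcal{V} \cong \mathcal{B}_{\mathcal{W}^\circ}(\R^d)$. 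I would invoke the projective-description theory of weighted inductive limits together with the functional-analytic criterion for completeness of $(LF)$-spaces: completeness amounts to a ``regularly decreasing''-type condition on the weight system $\big(w_N(x)(1+|\xi|)^n\big)$, and since the $\xi$-weights form the fixed scale $(1+|\xi|)^n$, this condition collapses onto the $x$-weights $(w_N)$ alone, where it reads precisely as the logarithmic-convexity (interpolation) condition \eqref{Omega-switched}: $\log w_M$ dominates, up to an additive constant, every convex combination $(1-\theta)\log w_N + \theta\log w_K$ with $K \geq M$. For the forward direction this is a routine verification once the criterion is set up; for the converse, a failure of \eqref{Omega-switched} at some level $N$ obstructs completeness, which one sees by exhibiting an element of the completion lying outside every step $\mathcal{V}_N$. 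The remaining links are softer: $(ii) \Leftrightarrow (iii)$ because $\dot{\mathcal{B}}_{1/w_N}$ is a closed subspace of $\mathcal{B}_{1/w_N}$ realized by the same phase-space model modulo the $c_0$-versus-$\ell^\infty$ distinction, which does not affect completeness of the inductive limit; and $(ii) \Leftrightarrow (iv) \Leftrightarrow (v)$ follow from the duality theory of $(PLB)$-spaces --- a reduced $(PLB)$-space is ultrabornological exactly when it is the strong dual of a complete $(LF)$-space, in which case the two topologies on the dual coincide --- applied to the models above, using that a complete $(LF)$-space is webbed and hence a Pt\'ak space so that the closed graph theorem is available.

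Finally, the isomorphism $(\dot{\mathcal{B}}_{\mathcal{W}^\circ}(\R^d))'' \cong \mathcal{B}_{\mathcal{W}^\circ}(\R^d)$ is again read off the phase-space model: each step $\dot{\mathcal{B}}_{1/w_N}$ has strong bidual $\mathcal{B}_{1/w_N}$ (a weighted Fr\'echet-level version of $c_0'' = \ell^\infty$, known or provable directly via the STFT), and forming the strong bidual commutes with the countable inductive limit here because the spectrum is regular enough; transporting back via $V_\psi^{\ast}$ yields $\mathcal{B}_{\mathcal{W}^\circ}(\R^d)$. I expect the genuine obstacle to be exactly $(i) \Leftrightarrow (ii)$: one must pin down the completeness content of a genuinely \emph{two-parameter} weighted $(LF)$-space --- the $x$-growth governed by $\mathcal{W}$ entangled with the $\xi$-decay governed by the differentiation index --- and match it, with no loss in either direction, to the somewhat opaque three-weight condition \eqref{Omega-switched}. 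Showing that the projective description and the ``regularly decreasing'' condition collapse cleanly onto the $x$-variable, and that the resulting condition is precisely \eqref{Omega-switched}, is the delicate point; everything else is classical duality or bookkeeping with the STFT.
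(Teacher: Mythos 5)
Your overall architecture --- transporting everything to phase space via the STFT, modelling $\mathcal{B}_{\mathcal{W}^\circ}(\R^d)$ as a complemented subspace of the weighted $(LF)$-space of continuous functions with weights $w_N(x)^{-1}(1+|\xi|)^n$, and reading the convolutor space off a weighted $L^1$ model --- is exactly the paper's strategy, and your forward implication $(i)\Rightarrow(ii)$ can be made to work along your lines: \eqref{Omega-switched} implies the Bierstedt--Bonet condition $(Q)$ for the two-parameter weight system (a short computation, carried out in the paper for the projective description), whence the ambient space equals its complete projective hull and completeness passes down to the complemented subspace. Be aware, though, that the paper's own route to $(ii)$ verifies Wengenroth's criterion ($(wQ)$ plus bounded stability) directly on $\mathcal{B}_{\mathcal{W}^\circ}(\R^d)$, and there the ``collapse'' of the derivative index onto the $x$-weights is \emph{not} bookkeeping: it requires a multivariate Gorny--Kolmogorov inequality $\|f\|_{m}\leq C\|f\|_{0}^{1-m/k}\|f\|_{k}^{m/k}$ on cubes. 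You never name this inequality or any substitute for it.

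The genuine gap is the converse $(ii)/(iii)\Rightarrow(i)$. Complementation transfers completeness only from the ambient phase-space model down to $\dot{\mathcal{B}}_{\mathcal{W}^\circ}(\R^d)$, never upward, so completeness of $\dot{\mathcal{B}}_{\mathcal{W}^\circ}(\R^d)$ tells you nothing about the ambient weighted $(LF)$-space and no weight condition can be ``read off'' from it; moreover, for $(LF)$-spaces with Fr\'echet steps there is no off-the-shelf ``completeness $\Leftrightarrow$ regularly decreasing'' theorem to invoke. Your fallback --- ``exhibit an element of the completion lying outside every step'' --- restates the goal without a construction. What actually closes this direction in the paper is: completeness $\Rightarrow(wQ)$ (general $(LF)$-theory), then testing $(wQ)$ on translates $T_x\varphi$ of $\varphi\in\mathcal{D}_{[-1,1]^d}$ to deduce the splitting-theory condition $(S_2)^*$ for the pair $(\mathcal{D}_{[-1,1]^d},\mathcal{W}^\circ)$, and finally Vogt's argument (apply $(S_2)^*$ to the unit vectors of $s\cong\mathcal{D}_{[-1,1]^d}$ and minimize $v_N(x)/r+v_K(x)r^{k-m}$ over $r>0$) to extract \eqref{Omega-switched}. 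A similar concreteness is missing for $(iv)$ and $(v)$: the ``duality theory of $(PLB)$-spaces'' you invoke does not exist in the clean form you state, and the paper instead proves infrabarrelledness of $\mathcal{O}'_C(\mathcal{D},L^1_{\mathcal{W}})$ by explicitly identifying its strong dual with $\mathcal{B}_{\mathcal{W}^\circ}(\R^d)$ via the desingularization formula (which in turn needs the projective description under \eqref{Omega-switched} and the regularity of $\mathcal{B}_{\mathcal{W}^\circ}(\R^d)$), and proves $(v)\Rightarrow(i)$ by testing polars against the functionals $\partial^{\alpha}(T_x\delta)/w_M(x)$ to obtain $\beta$-regularity of $\dot{\mathcal{B}}_{\mathcal{W}^\circ}(\R^d)$.
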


Condition \eqref{Omega-switched} means that $L^1_{\mathcal{W}}(\R^d)$ has property $(\Omega)$ of Vogt and Wagner \cite{M-V}. Hence, the equivalence between $(i)$ and $(iv)$ in Theorem \ref{thm-introduction} may be anticipated from the fact that, for a Fr\'echet space $E$, the operator space $L_b(\mathcal{D}(\R^d), E) \cong L_b(s, E)^\N$ is ultrabornological if $E$ has $(\Omega)$, while the converse implication holds if, e.g., $E$ is a K\"{o}the sequence space or $E$ is nuclear (cf. \cite[Thm.\ 4.1 and  4.9]{Vogt-2}).  Condition $(\Omega)$ plays an important role in the splitting theory for Fr\'echet spaces \cite{Vogt-2,Vogt} and, in fact, the ideas of some of our proofs in Section \ref{sect-indlimit-smooth} stem from this theory.

By applying Theorem \ref{thm-introduction} to a constant sequence $\mathcal{W} = (w)_{N \in \N}$, where $w$ is a positive continuous\footnote{In fact, as we shall show in Theorem \ref{weighted-dual-DL1}, it suffices to assume that $w$ is measurable.} function on $\R^d$ such that 
$\sup_{x \in \R^d}w(x + \: \cdot \:)/w(x) \in L^\infty_{\operatorname{loc}}(\R^d),$ we obtain the analogue of Schwartz' results for a very general class of weighted $\mathcal{D}'_{L^1}(\R^d)$ spaces (cf.\ Theorem \ref{weighted-dual-DL1}); see \cite{D-P-V2015TIB} for earlier work in this direction. Actually, to the best of our knowledge, the full topological identity $
\mathcal{D}'_{L^1}(\R^d) = \mathcal{O}'_C(\mathcal{D}, L^1)$ even seems to be new in the unweighted case; Schwartz only showed that these spaces coincide algebraically and have the same bounded sets and null sequences \cite[p.\ 202]{Schwartz}. 

We shall also show that, under natural assumptions on the sequence $\mathcal{W} =(w_N)_{N \in \N}$, the space $\mathcal{O}'_C(\mathcal{D},L^1_{\mathcal{W}})$ coincides topologically with the space of convolutors of the Gelfand-Shilov type spaces $\mathcal{B}_{\mathcal{W}}(\R^d) := \varprojlim_{N \in \N} \mathcal{B}_{w_N}(\R^d).$
Hence, Theorem \ref{thm-introduction} also comprises a quantified version of Grothendieck's results (cf.\ Theorem \ref{char-UB-smooth-GS}). The space of convolutors of the space $\mathcal{K}_1(\R^d)$ of exponentially decreasing smooth functions \cite{Hasumi} was studied by Ziele\'{z}ny \cite{Zielezny}. He claims that this space is ultrabornological but his argument seems to contain a gap (see Remark \ref{exp-example}); Theorem \ref{char-UB-smooth-GS} contains this result as a particular instance. 

It should be pointed out that the methods to be employed are completely different from the ones used by Schwartz and Grothendieck. Namely, we first introduce weighted $(LF)$-spaces of smooth functions and study their completeness; these spaces are the analogue of $\mathcal{O}_C(\R^d)$ in the present setting.  To this end, we shall use abstract results concerning the regularity properties of $(LF)$-spaces \cite{Wengenroth-96, Wengenroth}, which have their roots in Palamadov's homological theory for $(LF)$-spaces \cite{Palamadov}. Interestingly, when specialized to $\mathcal{O}_{C}(\mathbb{R}^{d})$, our method supplies what appears to be the first known direct proof in the literature of the completeness of $\mathcal{O}_{C}(\mathbb{R}^{d})$; we refer to \cite{L-W} for a direct proof of the dual statement, that is, the ultrabornologicity of $\mathcal{O}'_C(\R^d) \cong \mathcal{O}_M(\R^d)$. In the second part of this work, we shall exploit the mapping properties of the \emph{short-time Fourier transform} $(STFT)$ \cite{Grochenig} on various function and distribution spaces to show that $(\dot{\mathcal{B}}_{\mathcal{W}^\circ}(\R^d))' = \mathcal{O}'_C(\mathcal{D},L^1_{\mathcal{W}})$ as sets and to link the topological properties of $\mathcal{O}'_C(\mathcal{D},L^1_{\mathcal{W}})$ to those of $\mathcal{B}_{\mathcal{W}^\circ}(\R^d)$ and $\dot{\mathcal{B}}_{\mathcal{W}^\circ}(\R^d)$.  The proof of Theorem \ref{thm-introduction} will then be achieved by combining this with the results obtained in the first part of the article. In our opinion, the use of the STFT leads to transparent and insightful proofs of rather subtle results.
In this context, we highlight the papers \cite{B-O} in which the mapping properties of the STFT on  $\mathcal{O}'_C(\R^d)$ are established by using Schwartz' theory of vector-valued distributions and \cite{K-P-S-V} in which the STFT is used to characterize weighted $\mathcal{B}'(\R^d)$ and $\dot{\mathcal{B}}'(\R^d)$ spaces  in terms of the growth of convolution averages of their elements. We were inspired by both of these works. Finally, we would also like to mention that ultradistributional analogues of our results are treated in the paper \cite{D-Vconvolutorsultra}.

The plan of the article is as follows. In the auxiliary Section \ref{STFT-distributions}, we develop a theory of the STFT that applies to \emph{all} elements of $\mathcal{D}'(\R^d)$. This framework will enable us to deal with weight systems satisfying the very general condition \eqref{general-cond}. The locally convex structure of the $(LF)$-spaces $\mathcal{B}_{\mathcal{W}^\circ}(\R^d)$ and $\dot{\mathcal{B}}_{\mathcal{W}^\circ}(\R^d)$ is analyzed in Section \ref{sect-indlimit-smooth}. We shall show that for these spaces all regularity conditions considered in the literature (in particular, completeness) are equivalent to the fact that the weight system $\mathcal{W}$ satisfies condition \eqref{Omega-switched}. For later use, we also characterize these spaces in terms of the STFT. In the main Section \ref{L1-convolutors}, we study various structural and topological properties of the space $\mathcal{O}'_C(\mathcal{D},L^1_{\mathcal{W}})$ via the STFT and complete the proof of Theorem \ref{thm-introduction}. Our results concerning weighted $\mathcal{D}'_{L^1}$ spaces are presented in Section \ref{sect-weighted-DL1}. Finally, in Section \ref{GS-convolutors}, we apply our general theory to discuss the spaces of convolutors of $\dot{\mathcal{B}}_{\mathcal{W}}(\R^d)$.

\section{The short-time Fourier transform for general distributions}\label{STFT-distributions}
The aim of  this auxiliary section is to define and study the STFT for general distributions with respect to compactly supported smooth window functions. Most notably, we prove  reconstruction and desingularization formulas. These formulas will play an important role in the rest of this article. 

Our notation from distribution theory is standard. Given a compact $K \Subset \R^d$ and $n \in \N$, we write $\mathcal{D}_K^n$ for the Banach space consisting of all $\varphi \in C^n(\R^d)$ with $\operatorname{supp} \varphi \subseteq K$ endowed with the norm
$$
\| \varphi\|_{K,n} := \max_{|\alpha| \leq n} \max_{x \in K } |\partial^\alpha \varphi(x)|.
$$
For $n = 0$, we simply write $\|\: \cdot \: \|_{K} = \|\:\cdot \:\|_{K,0}$. We define
$$
\mathcal{D}_K := \varprojlim_{n \in \N} \mathcal{D}^n_K, \qquad \mathcal{D}(\R^d) := \varinjlim_{K \Subset \R^d} \mathcal{D}_K.
$$
Furthermore, we denote by $\mathcal{E}(\R^d)$ and $\mathcal{S}(\R^d)$ the space of smooth functions on $\R^d$ and the space of rapidly decreasing smooth functions on $\R^d$, respectively, each endowed with their standard Fr\'echet space structure. The dual spaces $\mathcal{D}'(\R^d)$, $\mathcal{E}'(\R^d)$ and $\mathcal{S}'(\R^d)$ are the space of distributions on $\R^d$, the space of compactly supported distributions on $\R^d$, and the space of tempered distributions on $\R^d$, respectively. We endow these spaces with their strong topologies.

Next, we recall some fundamental properties of the STFT on the space $L^2(\R^d)$; for further properties of the STFT we refer to the book \cite{Grochenig}. As customary, the translation and modulation operators are denoted by $T_xf = f(\:\cdot\: - x)$ and $M_\xi f = e^{2\pi i \xi \cdot} f$, $x, \xi \in \R^d$, respectively. We also write $\check{f} = f(- \:\cdot\:)$ for reflection about the origin. The STFT of a function $f \in L^2(\R^d)$ with respect to a window function $\psi \in L^2(\R^d)$ is defined as
$$
V_\psi f(x,\xi) := (f, M_\xi T_x\psi)_{L^2} = \int_{\R^d} f(t) \overline{\psi(t-x)}e^{-2\pi i \xi t} \dt, \qquad (x, \xi) \in \R^{2d}.
$$
We have that $\|V_\psi f\|_{L^2(\R^{2d})} = \|\psi\|_{L^2}\|f\|_{L^2}$. In particular, the mapping $V_\psi : L^2(\R^d) \rightarrow L^2(\R^{2d})$ is continuous. The adjoint of $V_\psi$ is given by the weak integral
$$
V^\ast_\psi F = \int \int_{\R^{2d}} F(x,\xi) M_\xi T_x\psi \dx \dxi, \qquad F \in L^2(\R^{2d}).
$$
If $\psi \neq 0$ and $\gamma \in L^2(\R^d)$ is a synthesis window for $\psi$, that is, $(\gamma, \psi)_{L^2} \neq 0$, then
\begin{equation}
\frac{1}{(\gamma, \psi)_{L^2}} V^\ast_\gamma \circ V_\psi = \operatorname{id}_{L^2(\R^d)}.
\label{reconstruction-L2}
\end{equation}

In order to be able to extend the STFT to the space of distributions $\mathcal{D}'(\R^d)$ we must first establish the mapping properties of the STFT on $\mathcal{D}(\R^d)$. We need some preparation. Given two lcHs (= locally convex Hausdorff spaces) $E$ and $F$, we write $E \widehat{\otimes}_\pi F$, $E \widehat{\otimes}_\varepsilon F$, and $E \widehat{\otimes}_i F$ for the completion of $E \otimes F$ with respect to the projective topology, the $\varepsilon$-topology and the inductive topology, respectively \cite{Komatsu3}. If either $E$ or $F$ is nuclear, we simply write $E \widehat{\otimes} F  = E \widehat{\otimes}_\pi F = E \widehat{\otimes}_\varepsilon F$. Let $d_1, d_2 \in \N$ and $K \Subset \R^{d_1}$. We may identify the space $\mathcal{D}_{K,x} \widehat{\otimes} \mathcal{S}(\R^{d_2}_\xi)$ with the Fr\'echet space consisting of all $\varphi \in C^\infty(\R^{d_1 + d_2}_{x,\xi})$ with $\operatorname{supp} \varphi \subseteq K \times \R^{d_2}$ such that
\begin{equation}
|\varphi|_{K,n} := \max_{|\alpha| \leq n} \max_{|\beta| \leq n} \sup_{(x,\xi) \in K \times \R^{d_2}} |\partial^{\alpha}_x \partial^\beta_\xi \varphi(x,\xi)|(1+|\xi|)^n < \infty 
\label{definition-norm-tensor}
\end{equation}
for all $n \in \N$. By \cite[Thm.~2.3, p.~670]{Komatsu3}, we have the following canonical isomorphisms of lcHs
$$
\mathcal{D}(\R^{d_1}_x) \widehat{\otimes}_i \mathcal{S}(\R_\xi^{d_2}) \cong \varinjlim_{K \Subset \R^{d_1}} \mathcal{D}_{K,x} \widehat{\otimes} \mathcal{S}(\R^{d_2}_\xi)
$$
and
$$
(\mathcal{D}(\R^{d_1}_x) \widehat{\otimes}_i \mathcal{S}(\R_\xi^{d_2}))'_{b} \cong  \mathcal{D}'(\R^{d_1}_x) \widehat{\otimes} \mathcal{S}'(\R^{d_2}_\xi).
$$
We then have:
\begin{proposition}\label{STFT-D}
Let $\psi \in \mathcal{D}(\R^d)$. Then, the mappings
$$
V_\psi: \mathcal{D}(\R^d) \rightarrow \mathcal{D}(\R^d_x) \widehat{\otimes}_i \mathcal{S}(\R_\xi^d)
$$
and
$$
V^\ast_\psi:  \mathcal{D}(\R^d_x) \widehat{\otimes}_i \mathcal{S}(\R_\xi^d) \rightarrow \mathcal{D}(\R^d)
$$
are well-defined and continuous.
\end{proposition}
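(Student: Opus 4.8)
The plan is to establish both mapping properties by working directly from the explicit integral formulas for $V_\psi$ and $V^\ast_\psi$, combined with the concrete description of $\mathcal{D}_{K,x} \widehat{\otimes} \mathcal{S}(\R^d_\xi)$ recalled before the statement and the universal property of the strict $(LF)$-space $\varinjlim_{K} \mathcal{D}_{K,x} \widehat{\otimes} \mathcal{S}(\R^d_\xi)$. In particular, the argument will reduce to: (a) a support bound showing the images land in spaces of the right type; (b) differentiation under the integral sign to get smoothness together with the required seminorm estimates on a fixed Banach/Fr\'echet step; (c) passing to the (inductive) limits.

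First I would treat $V_\psi$. Fix a compact set $L \Subset \R^d$ and $\varphi \in \mathcal{D}_L$. Since the integrand of $V_\psi \varphi(x,\xi) = \int_{\R^d} \varphi(t)\overline{\psi(t-x)} e^{-2\pi i \xi t}\, \dt$ vanishes unless $t \in L$ and $t - x \in \operatorname{supp}\psi$, the function $V_\psi\varphi$ is supported in $K \times \R^d$ with $K := L - \operatorname{supp}\psi \Subset \R^d$ depending only on $L$ and $\psi$; differentiation under the integral sign gives $V_\psi\varphi \in C^\infty(\R^{2d})$, with $\partial_x^\alpha$ acting on $\overline{\psi(t-x)}$ and $\partial_\xi^\beta$ producing a factor $(-2\pi i t)^\beta$. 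To control $(1+|\xi|)^n \partial_x^\alpha \partial_\xi^\beta V_\psi\varphi$ for $|\alpha|,|\beta| \leq n$, I would use that $\xi^\gamma$ times the STFT equals, up to a constant, the Fourier transform in $t$ of $\partial_t^\gamma$ of the (compactly supported) amplitude $t^\beta \varphi(t)\overline{(\partial^\alpha\psi)(t-x)}$, so integration by parts in $t$ (no boundary terms, by compact support) yields $|V_\psi\varphi|_{K,n} \leq C_{n,\psi}\,\|\varphi\|_{L,n}$, where $C_{n,\psi}$ involves finitely many seminorms of $\psi$ and $\sup$'s over $L$. Hence $V_\psi : \mathcal{D}_L \to \mathcal{D}_{K,x} \widehat{\otimes}\mathcal{S}(\R^d_\xi)$ is continuous, and since $L$ is arbitrary the isomorphism $\mathcal{D}(\R^d_x) \widehat{\otimes}_i \mathcal{S}(\R^d_\xi) \cong \varinjlim_K \mathcal{D}_{K,x}\widehat{\otimes}\mathcal{S}(\R^d_\xi)$ gives continuity of $V_\psi$ on $\mathcal{D}(\R^d)$.

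For $V^\ast_\psi$, fix $K \Subset \R^d$ and $F \in \mathcal{D}_{K,x}\widehat{\otimes}\mathcal{S}(\R^d_\xi)$. Because $\|M_\xi T_x\psi\|$ is uniformly bounded while $x \mapsto F(x,\xi)$ is supported in $K$ and $\xi \mapsto F(\cdot,\xi)$ is rapidly decreasing, the weak integral $V^\ast_\psi F = \int\int_{\R^{2d}} F(x,\xi) M_\xi T_x \psi\, \dx \dxi$ converges absolutely (as a Bochner integral) in $\mathcal{D}_L$ with $L := K + \operatorname{supp}\psi$, so $V^\ast_\psi F(t) = \int\int F(x,\xi) e^{2\pi i \xi t}\psi(t-x)\, \dx\dxi$ pointwise and $\operatorname{supp} V^\ast_\psi F \subseteq L$. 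Differentiating under the integral ($\partial_t$ hitting $e^{2\pi i \xi t}$ produces a factor $2\pi i \xi$, hitting $\psi(t-x)$ a derivative of $\psi$) and estimating $\int_{\R^d}(1+|\xi|)^n |F(\cdot,\xi)|\,\dxi$ by $\big(\int (1+|\xi|)^{-d-1}\,\dxi\big)\sup_\xi (1+|\xi|)^{n+d+1}|\,\cdot\,|$, I get $\|V^\ast_\psi F\|_{L,n} \leq C_{n,\psi}\,|F|_{K,n+d+1}$ for all $n$. Thus $V^\ast_\psi : \mathcal{D}_{K,x}\widehat{\otimes}\mathcal{S}(\R^d_\xi) \to \mathcal{D}_L \hookrightarrow \mathcal{D}(\R^d)$ is continuous, and the inductive-limit topology on $\mathcal{D}(\R^d_x)\widehat{\otimes}_i\mathcal{S}(\R^d_\xi)$ upgrades this to continuity of $V^\ast_\psi$ on the whole space.

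The only point requiring care is that $V_\psi\varphi$, a priori only shown to be a smooth function on $K \times \R^d$ with $|V_\psi\varphi|_{K,n} < \infty$ for all $n$, genuinely belongs to the \emph{completed} tensor product $\mathcal{D}_{K,x}\widehat{\otimes}\mathcal{S}(\R^d_\xi)$ rather than to some larger space carrying the same seminorms; but this is precisely the content of the identification recalled from \cite[Thm.\ 2.3]{Komatsu3} immediately before the proposition, so no extra argument is needed. Everything else is routine differentiation under the integral and integration by parts, with the supports tracked through the convolution-type structure of the two kernels; the mild loss of $d+1$ units of $\xi$-regularity in the estimate for $V^\ast_\psi$ is harmless since we are mapping into a projective limit over $n$.
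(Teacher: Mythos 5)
Your proof is correct. It differs from the paper's in presentation rather than in substance: the paper proves both statements by factoring the operators through intermediate continuous maps --- for $V_\psi$ it writes $V_\psi = \mathcal{F}_t \circ S$, where $S$ is multiplication by $T_x\overline{\psi}(t)$ and $\mathcal{F}_t$ is the partial Fourier transform $\mathcal{D}(\R^d_{x,t}) \rightarrow \mathcal{D}(\R^d_x)\widehat{\otimes}_i\mathcal{S}(\R^d_\xi)$, and for $V_\psi^\ast$ it composes a similar multiplication map with partial integration in $(x,\xi)$ --- whereas you unwind everything into direct seminorm estimates (support tracking, differentiation under the integral, integration by parts in $t$ to trade powers of $\xi$ for derivatives of the amplitude, and the $(1+|\xi|)^{-d-1}$ trick to integrate out $\xi$). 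The two routes rest on exactly the same elementary computations; the paper's factorization is more economical on the page and delegates the estimates to the (asserted) continuity of the standard building blocks, while your version is self-contained and produces the explicit bounds $|V_\psi\varphi|_{K,n} \leq C_{n,\psi}\|\varphi\|_{L,n}$ and $\|V_\psi^\ast F\|_{L,n} \leq C_{n,\psi}|F|_{K,n+d+1}$, which are of the same nature as the quantitative estimates the paper later proves by hand in Lemmas \ref{STFT-test-smooth} and \ref{double-int-test-smooth}. You are also right to flag, and correctly dispose of, the one genuinely non-computational point: that the concrete function space defined by the seminorms \eqref{definition-norm-tensor} really is the completed tensor product, which is exactly the identification the paper imports from \cite[Thm.\ 2.3]{Komatsu3}.
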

\begin{proof}
We first look at $V_\psi$. Consider the continuous linear mappings
$$
S: \mathcal{D}(\R^d_t) \rightarrow \mathcal{D}(\R^{2d}_{x,t}), \quad \varphi(t) \mapsto \varphi(t)T_x{\overline{\psi}}(t),
$$
and
$$
\mathcal{F}_t: \mathcal{D}(\R^{2d}_{x,t}) \rightarrow \mathcal{D}(\R^d_x) \widehat{\otimes}_i \mathcal{S}(\R_\xi^d), \quad \varphi(x,t) \mapsto \int_{\R^d} \varphi(x,t)e^{-2\pi i \xi t} \dt.
$$
The result follows from the representation $V_\psi = \mathcal{F}_t \circ S$. Next, we treat $V_\psi^\ast$. Consider the continuous linear mappings
$$
S: \mathcal{D}(\R^d_x) \widehat{\otimes}_i \mathcal{S}(\R_\xi^d) \rightarrow \mathcal{D}(\R^d_t) \widehat{\otimes}_i \mathcal{S}(\R_{x,\xi}^{2d}), \quad \varphi(x,\xi) \mapsto \varphi(x,\xi)M_\xi T_x\psi(t)
$$
and
$$
\operatorname{id}_{ \mathcal{D}(\R^d_t)} \widehat{\otimes}_i 1(x,\xi) :  \mathcal{D}(\R^d_t) \widehat{\otimes}_i \mathcal{S}(\R_{x,\xi}^{2d}) \rightarrow \mathcal{D}(\R^d_t), \quad \varphi(t,x,\xi) 
\mapsto \int\int_{\R^{2d}} \varphi(t,x,\xi) \dx\dxi.
$$
The result follows from the representation $V^\ast_\psi = (\operatorname{id}_{ \mathcal{D}(\R^d_t)}  \widehat{\otimes}_i 1(x,\xi) )\circ S$.
\end{proof}
Observe that, if $\psi \in \mathcal{D}(\R^d) \backslash \{0\}$ and $\gamma \in \mathcal{D}(\R^d)$ is a synthesis window for $\psi$,
the reconstruction formula \eqref{reconstruction-L2} implies that
\begin{equation}
\frac{1}{(\gamma, \psi)_{L^2}} V^\ast_\gamma \circ V_\psi = \operatorname{id}_{\mathcal{D}(\R^d)}.
\label{reconstruction-D}
\end{equation}

We are ready to define the STFT on the space $\mathcal{D}'(\R^d)$. For $\psi \in \mathcal{D}(\R^d)$ and $f \in \mathcal{D}'(\R^d)$ we define
$$
V_\psi f(x,\xi) :=  \langle f, \overline{M_\xi T_x\psi} \rangle = e^{-2\pi i \xi x} (f \ast M_\xi \check{\overline{\psi}})(x), \qquad (x,\xi) \in \R^{2d}.
$$
Clearly, $V_\psi f$ is a smooth function on $\R^{2d}$.
\begin{lemma} \label{well-defined-STFT}
Let $\psi \in \mathcal{D}(\R^d)$ and  $f \in  \mathcal{D}'(\R^d)$. Then, for every $K \Subset \R^d$ there is $n \in \N$ such that
$$
\sup_{(x,\xi) \in K \times \R^{d}}\frac{|V_\psi f(x,\xi)|}{(1+|\xi|)^n} < \infty.
$$
In particular, $V_\psi f$ defines an element of $\mathcal{D}'(\R^{d}_x) \widehat{\otimes} \mathcal{S}'(\R^{d}_\xi)$ via
$$
\langle V_\psi f, \varphi \rangle := \int \int_{\R^{2d}} V_\psi f(x,\xi) \varphi(x,\xi) \dx \dxi, \qquad \varphi \in \mathcal{D}(\R^d_x) \widehat{\otimes}_i \mathcal{S}(\R_\xi^d).
$$
\end{lemma}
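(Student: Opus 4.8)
The plan is to obtain the pointwise bound from the fact that a distribution is locally of finite order, controlling the dependence on $\xi$ by a Leibniz expansion of the modulated, translated window. Fix a compact set $K \Subset \R^d$ and put $L := K + \operatorname{supp}\psi$, which is again a compact subset of $\R^d$. For every $x \in K$ and every $\xi \in \R^d$ the function $\overline{M_\xi T_x\psi}$ is smooth with support in $L$, so it belongs to the Fr\'echet space $\mathcal{D}_L$. Continuity of $f$ on $\mathcal{D}_L$ therefore supplies constants $C > 0$ and $m \in \N$, \emph{independent of $x$ and $\xi$}, such that
$$
|V_\psi f(x,\xi)| = |\langle f, \overline{M_\xi T_x\psi}\rangle| \leq C\, \|\overline{M_\xi T_x\psi}\|_{L,m}.
$$

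Next I would estimate the norm on the right. Applying the Leibniz rule to $\partial_t^\alpha\big(e^{-2\pi i \xi t}\,\overline{\psi(t-x)}\big)$ produces a finite sum of terms of the form $c_{\alpha\beta}\,\xi^\beta e^{-2\pi i \xi t}\,\overline{\partial^{\alpha-\beta}\psi(t-x)}$ with $\beta \leq \alpha$; taking absolute values, using $|\xi^\beta| \leq (1+|\xi|)^{|\beta|} \leq (1+|\xi|)^m$ for $|\beta| \leq |\alpha| \leq m$, and bounding the derivatives of $\psi$ by $\max_{|\gamma| \leq m}\|\partial^\gamma\psi\|_{L^\infty}$, one gets $\|\overline{M_\xi T_x\psi}\|_{L,m} \leq C'(1+|\xi|)^m$ with $C'$ depending only on $\psi$ and $m$. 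Combining this with the previous display yields the asserted estimate with $n := m$. This is really the only computation in the proof, and there is no genuine obstacle; the single point requiring care is that the order $m$ and the constant coming from the local continuity of $f$ be uniform in $x \in K$ and $\xi \in \R^d$, which is exactly why one fixes the one compact set $L = K + \operatorname{supp}\psi$ \emph{before} invoking the continuity of $f$.

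For the ``in particular'' part I would use the strict $(LF)$-representation $\mathcal{D}(\R^d_x)\widehat{\otimes}_i \mathcal{S}(\R^d_\xi) \cong \varinjlim_{K \Subset \R^d} \mathcal{D}_{K,x}\widehat{\otimes}\mathcal{S}(\R^d_\xi)$ recalled before the statement. Any $\varphi$ in the step space $\mathcal{D}_{K,x}\widehat{\otimes}\mathcal{S}(\R^d_\xi)$ is a smooth function supported in $K \times \R^d$ with all seminorms $|\varphi|_{K,n}$ from \eqref{definition-norm-tensor} finite. Applying the first part to this particular $K$ to obtain an exponent $n$ and a constant $C_K$, and using $|\varphi(x,\xi)| \leq |\varphi|_{K,n+d+1}(1+|\xi|)^{-(n+d+1)}$, we estimate
$$
\int\int_{\R^{2d}}|V_\psi f(x,\xi)|\,|\varphi(x,\xi)| \dx \dxi \leq C_K\, |\varphi|_{K,n+d+1}\, |K| \int_{\R^d}(1+|\xi|)^{-d-1}\dxi < \infty.
$$
Hence the integral defining $\langle V_\psi f, \varphi\rangle$ converges absolutely, and the displayed bound shows that $\langle V_\psi f, \,\cdot\,\rangle$ is a continuous linear functional on each step space $\mathcal{D}_{K,x}\widehat{\otimes}\mathcal{S}(\R^d_\xi)$. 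By the universal property of the inductive limit it is continuous on $\mathcal{D}(\R^d_x)\widehat{\otimes}_i\mathcal{S}(\R^d_\xi)$, so $V_\psi f \in (\mathcal{D}(\R^d_x)\widehat{\otimes}_i\mathcal{S}(\R^d_\xi))'$, as claimed.
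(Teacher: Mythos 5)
Your proof is correct and follows essentially the same route as the paper: fix $L = K + \operatorname{supp}\psi$ first, invoke the local continuity of $f$ on $\mathcal{D}_L$ to get a uniform order and constant, and then bound $\|\overline{M_\xi T_x\psi}\|_{L,m}$ by $C'(1+|\xi|)^m$ via the Leibniz rule. Your explicit verification of the ``in particular'' part (absolute convergence and continuity on each step space $\mathcal{D}_{K,x}\widehat{\otimes}\mathcal{S}(\R^d_\xi)$, then passing to the inductive limit) is only sketched in the paper but is carried out correctly here.
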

\begin{proof}
Let $K \Subset \R^d$ be arbitrary and set $L = \operatorname{supp} \psi + K$. Since $\operatorname{supp} \overline{M_\xi T_x\psi} \subseteq L$ for all $(x,\xi) \in K \times \R^d$, there are $n \in \N$ and $C > 0$ such that
$$
|V_\psi f(x,\xi)| = | \langle f, \overline{M_\xi T_x\psi} \rangle| \leq C \|  M_\xi T_x\psi \|_{L,n}, \qquad (x,\xi) \in K \times \R^d.
$$
The result follows from the fact that
\begin{align*}
\| M_\xi T_x\psi\|_{L,n} 
&\leq \max_{|\alpha| \leq n} \sum_{\beta \leq \alpha} \binom{\alpha}{\beta}(2\pi|\xi|)^{|\beta|} \sup_{t \in L} | \partial^{\alpha -\beta} \psi(t-x)| \\
& \leq (2\pi)^n \|\psi\|_{\operatorname{supp} \psi,n} (1+ |\xi|)^n 
\end{align*}
for all $(x,\xi) \in K \times \R^d$.
\end{proof}

\begin{lemma}\label{STFT-transpose}
Let $\psi \in \mathcal{D}(\R^d)$ and let $f \in  \mathcal{D}'(\R^d)$. Then, 
$$
\langle V_\psi f, \varphi \rangle = \langle f, \overline{V^\ast_\psi \overline{\varphi}} \rangle, \qquad \varphi \in \mathcal{D}(\R^d_x) \widehat{\otimes}_i \mathcal{S}(\R_\xi^d).  
$$
\end{lemma}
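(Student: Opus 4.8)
The plan is to reduce, by density and continuity, to the case of elementary tensors $\varphi = \varphi_1 \otimes \varphi_2$, and then to establish the identity by commuting $f$ with a vector-valued integral.

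First I would note that both sides of the asserted identity are continuous linear functionals of $\varphi$ on $\mathcal{D}(\R^d_x) \widehat{\otimes}_i \mathcal{S}(\R^d_\xi)$. For the left-hand side this is exactly the content of Lemma \ref{well-defined-STFT}. For the right-hand side, write it as the composition $\varphi \mapsto \overline{\varphi} \mapsto V^\ast_\psi \overline{\varphi} \mapsto \overline{V^\ast_\psi \overline{\varphi}} \mapsto \langle f, \overline{V^\ast_\psi \overline{\varphi}}\rangle$: conjugation is a continuous conjugate-linear bijection of the tensor space (and of $\mathcal{D}(\R^d)$) onto itself, $V^\ast_\psi$ is continuous into $\mathcal{D}(\R^d)$ by Proposition \ref{STFT-D}, and $f$ is continuous on $\mathcal{D}(\R^d)$; the two conjugations cancel, so the composition is linear and continuous. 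Since $\mathcal{D}(\R^d_x) \otimes \mathcal{S}(\R^d_\xi)$ is dense in its completion $\mathcal{D}(\R^d_x) \widehat{\otimes}_i \mathcal{S}(\R^d_\xi)$, it is enough to prove the identity for $\varphi = \varphi_1 \otimes \varphi_2$ with $\varphi_1 \in \mathcal{D}(\R^d)$ and $\varphi_2 \in \mathcal{S}(\R^d)$.

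Fix such a $\varphi$ and put $L = \operatorname{supp} \psi + \operatorname{supp} \varphi_1 \Subset \R^d$. From the defining formula for $V^\ast_\psi$ one gets
$$
\overline{V^\ast_\psi \overline{\varphi}} = \int\int_{\R^{2d}} \varphi_1(x)\varphi_2(\xi)\, \overline{M_\xi T_x\psi}\, \dx\dxi .
$$
I would check that the integrand $(x,\xi) \mapsto \varphi_1(x)\varphi_2(\xi)\overline{M_\xi T_x\psi}$ is a continuous $\mathcal{D}_L$-valued function, supported in $\operatorname{supp}\varphi_1 \times \R^d$ in the variables $(x,\xi)$, whose norm in each Banach space $\mathcal{D}^n_L$ is, by the estimate $\|M_\xi T_x\psi\|_{L,n} \leq (2\pi)^n \|\psi\|_{\operatorname{supp}\psi,n}(1+|\xi|)^n$ from the proof of Lemma \ref{well-defined-STFT}, bounded by $C_n |\varphi_1(x)||\varphi_2(\xi)|(1+|\xi|)^n$ --- hence integrable since $\varphi_2 \in \mathcal{S}(\R^d)$. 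Therefore the integral converges absolutely in every $\mathcal{D}^n_L$, hence in the Fr\'echet space $\mathcal{D}_L$, as a genuine (Riemann/Bochner) integral. Because $f$ is continuous on $\mathcal{D}_L$ it commutes with this integral, giving
\begin{align*}
\langle f, \overline{V^\ast_\psi \overline{\varphi}}\rangle
&= \int\int_{\R^{2d}} \varphi_1(x)\varphi_2(\xi)\, \langle f, \overline{M_\xi T_x\psi}\rangle\, \dx\dxi \\
&= \int\int_{\R^{2d}} V_\psi f(x,\xi)\, \varphi_1(x)\varphi_2(\xi)\, \dx\dxi = \langle V_\psi f, \varphi\rangle ,
\end{align*}
which is the desired identity on elementary tensors.

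The one point that needs genuine care --- and the main obstacle --- is the previous step: that the weak integral defining $V^\ast_\psi \overline{\varphi}$ is actually a strong, absolutely convergent $\mathcal{D}_L$-valued integral, so that the continuous functional $f$ may legitimately be pulled inside. This is precisely what the support/decay bookkeeping above establishes; alternatively, one could invoke the factorization of $V^\ast_\psi$ exhibited in the proof of Proposition \ref{STFT-D} (through the map $\operatorname{id}_{\mathcal{D}(\R^d_t)} \widehat{\otimes}_i 1(x,\xi)$, which is integration over $(x,\xi)$) to see the interchange directly. The remaining ingredients --- the density reduction and the handling of the conjugations --- are routine.
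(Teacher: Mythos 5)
Your proof is correct, but it takes a different route from the one in the paper. The paper proves the identity for arbitrary $\varphi \in \mathcal{D}(\R^d_x)\widehat{\otimes}_i\mathcal{S}(\R^d_\xi)$ in one stroke: it observes that the kernel $\varphi(x,\xi)\overline{M_\xi T_x\psi}(t)$ belongs to $\mathcal{D}(\R^d_t)\widehat{\otimes}_i\mathcal{S}(\R^{2d}_{x,\xi})$ and invokes the commutativity $1(x,\xi)\widehat{\otimes}_i f(t)=f(t)\widehat{\otimes}_i 1(x,\xi)$ of the tensor product of distributions, i.e.\ a Fubini-type theorem for the inductive tensor product, to swap the order in which $f$ and the $(x,\xi)$-integration are applied. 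You instead reduce to elementary tensors $\varphi_1\otimes\varphi_2$ by density (both sides being continuous by Lemma \ref{well-defined-STFT} and Proposition \ref{STFT-D}, with the two conjugations restoring linearity) and then justify the same interchange concretely, by showing that the weak integral defining $V^\ast_\psi\overline{\varphi}$ is an absolutely convergent $\mathcal{D}_L$-valued Bochner integral with which the continuous functional $f$ commutes. The trade-off is clear: the paper's argument avoids any density reduction but leans on the (standard yet nontrivial) kernel/Fubini machinery for $\widehat{\otimes}_i$; yours is more elementary and self-contained, at the modest cost of the preliminary continuity-and-density bookkeeping, and it correctly identifies the vector-valued convergence of the integral as the one point requiring genuine care. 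Both arguments are sound.
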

\begin{proof}
Let $\varphi \in \mathcal{D}(\R^d_x) \widehat{\otimes}_i \mathcal{S}(\R_\xi^d)$ be arbitrary.  Since  
$$
\varphi(x,\xi)\overline{M_\xi T_x \psi}(t) \in \mathcal{D}(\R^d_t)  \widehat{\otimes}_i \mathcal{S}(\R_{x,\xi}^{2d})
$$
and 
$$
1(x,\xi) \widehat{\otimes}_i f(t)  = f(t) \widehat{\otimes}_i 1(x,\xi) \in \mathcal{D}'(\R^{d}_t) \widehat{\otimes} \mathcal{S}'(\R^{2d}_{x,\xi}),
$$
we have that
\begin{align*}
\langle V_\psi f, \varphi \rangle & = \int \int_{\R^{2d}} \langle f, \overline{M_\xi T_x\psi} \rangle \varphi(x,\xi) \dx \dxi = \langle 1(x,\xi) \widehat{\otimes}_i f(t),  \varphi(x,\xi)\overline{M_\xi T_x\psi}(t) \rangle  \\
&= \langle f(t) \widehat{\otimes}_i 1(x,\xi) ,  \varphi(x,\xi)\overline{M_\xi T_x\psi}(t) \rangle = \langle f(t), \int \int_{\R^{2d}} \varphi(x,\xi)\overline{M_\xi T_x\psi}(t) \dx \dxi \rangle \\
&=\langle f, \overline{V^\ast_\psi \overline{\varphi}} \rangle.
\qedhere
\end{align*}
\end{proof}
Let $\psi \in \mathcal{D}(\R^d)$. We \emph{define} the adjoint STFT of an element $F \in \mathcal{D}'(\R^{d}_x) \widehat{\otimes} \mathcal{S}'(\R^{d}_\xi)$ as
$$
\langle V^\ast_\psi F, \varphi \rangle := \langle F, \overline{V_\psi \overline{\varphi}} \rangle, \qquad \varphi \in \mathcal{D}(\R^d). 
$$
Notice that $V^\ast_\psi F \in \mathcal{D}'(\R^d)$ because of Proposition \ref{STFT-D}. We  have all the necessary ingredients to establish the mapping properties of the STFT on $\mathcal{D}'(\R^d)$.
\begin{proposition}\label{STFT-D-dual}
Let $\psi \in \mathcal{D}(\R^d)$. Then, the mappings
$$
V_\psi: \mathcal{D}'(\R^d) \rightarrow \mathcal{D}'(\R^{d}_x) \widehat{\otimes} \mathcal{S}'(\R^{d}_\xi)
$$
and
$$
V^\ast_\psi: \mathcal{D}'(\R^{d}_x) \widehat{\otimes} \mathcal{S}'(\R^{d}_\xi) \rightarrow \mathcal{D}'(\R^d)
$$
are well-defined and continuous. Moreover, if $\psi \neq 0$ and $\gamma \in \mathcal{D}(\R^d)$ is a synthesis window for $\psi$, then
\begin{equation}
\frac{1}{(\gamma, \psi)_{L^2}} V^\ast_\gamma \circ V_\psi = \operatorname{id}_{\mathcal{D}'(\R^d)}
\label{reconstruction-D-dual}
\end{equation}
and the desingularization formula 
\begin{equation}
\langle f, \varphi \rangle = \frac{1}{(\gamma, \psi)_{L^2}} \int \int_{\R^{2d}} V_\psi f(x,\xi) V_{\overline{\gamma}} \varphi(x,-\xi) \dx \dxi
\label{desing-D-dual}
\end{equation}
holds for all $f \in \mathcal{D}'(\R^d)$ and $\varphi \in \mathcal{D}(\R^d)$. 
\end{proposition}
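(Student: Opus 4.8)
The plan is to reduce everything to the already-established mapping properties of $V_\psi$ and $V^\ast_\psi$ on $\mathcal{D}(\R^d)$ (Proposition \ref{STFT-D}) together with the identity from Lemma \ref{STFT-transpose}. First I would observe that the definition $\langle V^\ast_\psi F, \varphi\rangle = \langle F, \overline{V_\psi\overline{\varphi}}\rangle$ for $F \in (\mathcal{D}(\R^d_x)\widehat{\otimes}_i\mathcal{S}(\R^d_\xi))'$ exhibits $V^\ast_\psi$ as (essentially) the transpose of the continuous map $V_\psi : \mathcal{D}(\R^d) \to \mathcal{D}(\R^d_x)\widehat{\otimes}_i\mathcal{S}(\R^d_\xi)$ from Proposition \ref{STFT-D}, up to the conjugation $\varphi \mapsto \overline{\varphi}$, which is a (conjugate-linear) topological isomorphism of each of these spaces onto itself. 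Since the transpose of a continuous linear map between locally convex spaces is continuous for the strong topologies, this immediately gives that $V^\ast_\psi : (\mathcal{D}(\R^d_x)\widehat{\otimes}_i\mathcal{S}(\R^d_\xi))'_b \to \mathcal{D}'(\R^d)$ is well-defined and continuous. Dually, Lemma \ref{STFT-transpose} states precisely that $\langle V_\psi f, \varphi\rangle = \langle f, \overline{V^\ast_\psi\overline{\varphi}}\rangle$, so $V_\psi : \mathcal{D}'(\R^d) \to (\mathcal{D}(\R^d_x)\widehat{\otimes}_i\mathcal{S}(\R^d_\xi))'_b$ is the transpose (again up to conjugation) of the continuous map $V^\ast_\psi : \mathcal{D}(\R^d_x)\widehat{\otimes}_i\mathcal{S}(\R^d_\xi) \to \mathcal{D}(\R^d)$ from Proposition \ref{STFT-D}, hence continuous for the same reason. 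One small point to check is that $V_\psi f$, a priori defined as the smooth function $\langle f, \overline{M_\xi T_x\psi}\rangle$, indeed acts as the functional described in Lemma \ref{well-defined-STFT} and that this functional agrees with the transpose just described — but that is exactly the content of Lemmas \ref{well-defined-STFT} and \ref{STFT-transpose}.

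Next I would establish the reconstruction formula \eqref{reconstruction-D-dual}. Let $\psi \in \mathcal{D}(\R^d)\setminus\{0\}$ and let $\gamma$ be a synthesis window for $\psi$. For $f \in \mathcal{D}'(\R^d)$ and $\varphi \in \mathcal{D}(\R^d)$, using the definition of the adjoint STFT, Lemma \ref{STFT-transpose}, and the reconstruction formula \eqref{reconstruction-D} on $\mathcal{D}(\R^d)$, I compute
\begin{align*}
\Big\langle \tfrac{1}{(\gamma,\psi)_{L^2}} V^\ast_\gamma(V_\psi f), \varphi\Big\rangle
&= \tfrac{1}{(\gamma,\psi)_{L^2}} \big\langle V_\psi f, \overline{V_\gamma\overline{\varphi}}\big\rangle
= \tfrac{1}{(\gamma,\psi)_{L^2}} \big\langle f, \overline{V^\ast_\psi\overline{\,\overline{V_\gamma\overline{\varphi}}\,}}\big\rangle \\
&= \Big\langle f, \overline{\tfrac{1}{(\overline\gamma\;\!,\;\!\overline\psi)}\, V^\ast_\psi V_\gamma\overline{\varphi}}\,\Big\rangle
= \big\langle f, \overline{\overline{\varphi}}\big\rangle = \langle f, \varphi\rangle,
\end{align*}
where in the penultimate step I used that $(\overline\gamma, \overline\psi)_{L^2} = \overline{(\psi,\gamma)_{L^2}}$ is again nonzero so $\overline\gamma$ is a synthesis window for $\overline\psi$, and hence \eqref{reconstruction-D} applies with the pair $(\overline\psi, \overline\gamma)$; one must be careful with the order of the windows and with which factor carries the conjugate, but this is a routine bookkeeping of conjugates. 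Since $\varphi \in \mathcal{D}(\R^d)$ was arbitrary, \eqref{reconstruction-D-dual} follows.

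Finally, the desingularization formula \eqref{desing-D-dual} is obtained by pairing the reconstruction formula against $\varphi$ and unwinding the definitions. Concretely, $\langle f, \varphi\rangle = \tfrac{1}{(\gamma,\psi)_{L^2}}\langle V^\ast_\gamma V_\psi f, \varphi\rangle = \tfrac{1}{(\gamma,\psi)_{L^2}}\langle V_\psi f, \overline{V_\gamma\overline{\varphi}}\rangle$, and then using the integral representation of the pairing from Lemma \ref{well-defined-STFT} together with the elementary identity $\overline{V_\gamma\overline{\varphi}(x,\xi)} = V_{\overline\gamma}\varphi(x,-\xi)$ — which one verifies directly from the integral defining the STFT — gives exactly \eqref{desing-D-dual}. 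I expect the main obstacle to be purely notational: keeping track of the conjugations and the swap $(x,\xi)\leftrightarrow(x,-\xi)$ so that the synthesis-window condition is invoked for the correct pair in the correct order. The functional-analytic content is light, since continuity of all four maps is just transposition of continuous maps combined with Proposition \ref{STFT-D}, and the two formulas are formal consequences of their $\mathcal{D}$-level counterparts \eqref{reconstruction-D} via the duality bracket.
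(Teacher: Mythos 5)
Your proposal is correct and follows essentially the same route as the paper: continuity of both maps by viewing them as (conjugate-)transposes of the continuous maps from Proposition \ref{STFT-D}, and the formulas \eqref{reconstruction-D-dual}--\eqref{desing-D-dual} by combining Lemma \ref{STFT-transpose} with the test-function reconstruction formula \eqref{reconstruction-D}. Your bookkeeping of conjugates and of the identity $\overline{V_\gamma\overline{\varphi}(x,\xi)} = V_{\overline{\gamma}}\varphi(x,-\xi)$ is consistent with the paper's (more tersely stated) computation.
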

\begin{proof}
The mapping  $V_\psi$ is continuous because of Lemma \ref{STFT-transpose} and the continuity of $V^\ast_\psi: \mathcal{D}(\R^d_x) \widehat{\otimes}_i \mathcal{S}(\R_\xi^d) \rightarrow \mathcal{D}(\R^d)$ (Proposition \ref{STFT-D}), while $V^\ast_\psi$ is continuous because  of the continuity of $V_\psi: \mathcal{D}(\R^d) \rightarrow \mathcal{D}(\R^d_x) \widehat{\otimes}_i \mathcal{S}(\R_\xi^d)$ (Proposition \ref{STFT-D}). Next, suppose that $\psi \neq 0$ and $\gamma \in \mathcal{D}(\R^d)$ is a synthesis window for $\psi$. Let $f \in \mathcal{D}'(\R^d)$ and $\varphi \in \mathcal{D}(\R^d)$ be arbitrary. Lemma \ref{STFT-transpose} and the reconstruction formula \eqref{reconstruction-D} imply that
$$
\langle V^\ast_\gamma (V_\psi f), \varphi \rangle = \langle V_\psi f, \overline{V_\gamma \overline{\varphi}} \rangle = \langle f,  \overline{V^\ast_\psi (V_\gamma \overline{\varphi})} \rangle = (\gamma, \varphi)_{L^2} \langle f, \varphi \rangle;
$$
that is, \eqref{reconstruction-D-dual} and \eqref{desing-D-dual} hold.
\end{proof}

Finally, we show that, for $f \in \mathcal{E}'(\R^d)$, the desingularization formula \eqref{desing-D-dual} holds for all $\varphi \in \mathcal{E}(\R^d)$. To this end, we briefly discuss the STFT on $\mathcal{E}(\R^d)$ and $\mathcal{E}'(\R^d)$. The space $\mathcal{E}(\R^d_x) \widehat{\otimes} \mathcal{S}(\R_\xi^d)$ may be identified with the Fr\'echet space consisting of all $\varphi \in C^\infty(\R^{2d}_{x,\xi})$ such that $|\varphi|_{K,n} < \infty$ for all $K \Subset \R^d$ and $n \in \N$ (cf.\ \eqref{definition-norm-tensor}).
\begin{proposition}\label{STFT-E}
Let $\psi \in \mathcal{D}(\R^d)$. 
\begin{itemize}
\item[$(i)$] The mapping
$$
V_\psi: \mathcal{E}(\R^d) \rightarrow \mathcal{E}(\R^d_x) \widehat{\otimes} \mathcal{S}(\R_\xi^d)
$$
is well-defined and continuous.
\item[$(ii)$] Let $f \in \mathcal{E}'(\R^d)$. Then, there are a compact $K \Subset \R^d$ and $n \in \N$ such that $\operatorname{supp} V_\psi f \subseteq K \times \R^d$ and 
$$
\sup_{(x,\xi) \in K \times \R^{d}}\frac{|V_\psi f(x,\xi)|}{(1+|\xi|)^n} < \infty.
$$
In particular, $V_\psi f$ defines an element of $\mathcal{E}'(\R^d_x) \widehat{\otimes} \mathcal{S}'(\R_\xi^d) \cong (\mathcal{E}(\R^d_x) \widehat{\otimes} \mathcal{S}(\R_\xi^d))'$ via
$$
\langle V_\psi f, \varphi \rangle := \int \int_{\R^{2d}} V_\psi f(x,\xi) \varphi(x,\xi) \dx \dxi, \qquad \varphi \in \mathcal{E}(\R^d_x) \widehat{\otimes} \mathcal{S}(\R_\xi^d).
$$
\end{itemize}
\end{proposition}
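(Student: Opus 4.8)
The overall strategy is to treat $(i)$ and $(ii)$ separately, in each case reducing to computations already carried out in Proposition~\ref{STFT-D} and Lemma~\ref{well-defined-STFT}; the only new feature is that elements of $\mathcal{E}(\R^d)$ are not compactly supported. For $(i)$ the plan is to factor $V_\psi$ as a composition of three continuous maps, after first straightening the moving $t$-support of the windowed function by the change of variables $t = x+s$. Writing $K_0 = \operatorname{supp}\psi$, the map
$$
\widetilde{S}: \mathcal{E}(\R^d_t) \to \mathcal{E}(\R^d_x) \widehat{\otimes} \mathcal{D}_{K_0}(\R^d_s): \varphi(t) \mapsto \varphi(x+s)\overline{\psi(s)}
$$
is well-defined and continuous: by the Leibniz rule $\sup_{x \in Q,\, s \in K_0}\max_{|\alpha|,|\gamma| \le n}|\partial_x^\alpha \partial_s^\gamma(\varphi(x+s)\overline{\psi(s)})| \le C\,\|\varphi\|_{Q+K_0,\,2n}$ for every $Q \Subset \R^d$ and $n \in \N$, and for each fixed $x$ the function $s \mapsto \varphi(x+s)\overline{\psi(s)}$ is supported in $K_0$, so the image genuinely lies in $\mathcal{E}(\R^d_x)\widehat{\otimes}\mathcal{D}_{K_0}(\R^d_s)$, which is identified with the corresponding concrete Fréchet function space exactly as $\mathcal{D}_K\widehat{\otimes}\mathcal{S}$ and $\mathcal{E}\widehat{\otimes}\mathcal{S}$ are in the excerpt. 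Composing with the partial Fourier transform $\operatorname{id}_{\mathcal{E}}\widehat{\otimes}\mathcal{F}_{s\to\xi}$ — continuous because $\mathcal{F}_{s\to\xi}: \mathcal{D}_{K_0}(\R^d_s) \to \mathcal{S}(\R^d_\xi)$ is continuous (integration by parts and $|K_0|<\infty$ give $\|\xi^\beta\partial_\xi^\gamma\widehat{\chi}\|_{L^\infty}\le C\|\chi\|_{K_0,|\beta|}$ for $\chi\in\mathcal{D}_{K_0}$) — and then with multiplication by $e^{-2\pi i\xi x}$, one recovers $V_\psi$ from the identity
$$
V_\psi\varphi(x,\xi) = e^{-2\pi i\xi x}\int_{\R^d}\varphi(x+s)\overline{\psi(s)}e^{-2\pi i\xi s}\dss = e^{-2\pi i\xi x}\,\bigl(\operatorname{id}_{\mathcal{E}}\widehat{\otimes}\mathcal{F}_{s\to\xi}\bigr)(\widetilde{S}\varphi).
$$
Equivalently, one may argue by hand: differentiating under the integral sign in $\partial_x^\alpha\partial_\xi^\beta V_\psi\varphi$ and integrating by parts $n$ times in $t$ yields $|V_\psi\varphi|_{K,n} \le C\,\|\varphi\|_{K+K_0,\,N}$ for a suitable $N=N(n)$ and every $K\Subset\R^d$ (cf.\ \eqref{definition-norm-tensor}), which is precisely the continuity asserted in $(i)$.

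For $(ii)$, let $f \in \mathcal{E}'(\R^d)$ and set $K := \operatorname{supp} f - K_0$, a compact subset of $\R^d$. Since $\operatorname{supp}\overline{M_\xi T_x\psi}\subseteq x+K_0$, the pairing $V_\psi f(x,\xi) = \langle f, \overline{M_\xi T_x\psi}\rangle$ vanishes whenever $x\notin K$, hence $\operatorname{supp} V_\psi f\subseteq K\times\R^d$. The growth bound follows exactly as in the proof of Lemma~\ref{well-defined-STFT}: picking $L\Subset\R^d$, $n\in\N$, $C>0$ with $|\langle f,\chi\rangle|\le C\|\chi\|_{L,n}$ for all $\chi\in\mathcal{E}(\R^d)$, the estimate on $\|M_\xi T_x\psi\|_{L,n}$ computed there gives $|V_\psi f(x,\xi)|\le C(2\pi)^n\|\psi\|_{K_0,n}(1+|\xi|)^n$ for all $(x,\xi)\in\R^{2d}$. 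Finally, the integral defining $\langle V_\psi f,\varphi\rangle$ converges absolutely and is dominated by a continuous seminorm: using $|\varphi(x,\xi)|\le |\varphi|_{K,\,n+d+1}(1+|\xi|)^{-(n+d+1)}$ for $x\in K$,
$$
\left| \int\int_{\R^{2d}} V_\psi f(x,\xi)\varphi(x,\xi)\dx\dxi \right| \le C\,|\varphi|_{K,\,n+d+1}\,|K|\int_{\R^d}(1+|\xi|)^{-(d+1)}\dxi < \infty,
$$
so $V_\psi f\in(\mathcal{E}(\R^d_x)\widehat{\otimes}\mathcal{S}(\R^d_\xi))'$.

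The only genuine obstacle I anticipate is in part $(i)$: because $\varphi\in\mathcal{E}(\R^d)$ is not compactly supported, the naive factorization $V_\psi=\mathcal{F}_t\circ S$ of Proposition~\ref{STFT-D} does not land in a tensor product of the form $\mathcal{D}_K\widehat{\otimes}(\cdots)$, so one must first perform the substitution $t=x+s$ (equivalently, peel off the modulation factor $e^{-2\pi i\xi x}$) in order that the windowed function truly belongs to $\mathcal{E}(\R^d_x)\widehat{\otimes}\mathcal{D}_{K_0}(\R^d_s)$ with support in $\R^d_x\times K_0$; verifying that $e^{-2\pi i\xi x}$ acts as a continuous multiplier on $\mathcal{E}(\R^d_x)\widehat{\otimes}\mathcal{S}(\R^d_\xi)$ — each $\partial_x$ costs a factor $(1+|\xi|)$, absorbed by the Schwartz decay, and each $\partial_\xi$ a factor bounded on compact $x$-sets — is the small technical point on which everything hinges. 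Part $(ii)$, by contrast, is a routine repetition of Lemma~\ref{well-defined-STFT}, the only addition being that the compactness of $\operatorname{supp} f$ forces $V_\psi f$ to have compact support in the $x$-variable.
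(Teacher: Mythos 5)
Your proposal is correct, and part $(ii)$ coincides with the paper's argument: same support computation $\operatorname{supp} V_\psi f \subseteq (\operatorname{supp} f - \operatorname{supp}\psi)\times\R^d$, same reduction of the growth bound to Lemma \ref{well-defined-STFT}, plus the (routine) absolute-convergence estimate for the pairing. For part $(i)$ you take a slightly different route, and the ``genuine obstacle'' you flag is in fact illusory. The paper keeps exactly the factorization $V_\psi = \mathcal{F}_t\circ S$ of Proposition \ref{STFT-D}, only with the intermediate space changed: $S:\mathcal{E}(\R^d_t)\to\mathcal{E}(\R^d_x)\widehat{\otimes}\mathcal{S}(\R^d_t)$, $\varphi(t)\mapsto\varphi(t)T_x\overline{\psi}(t)$, is well defined because for $x$ ranging in a compact $Q$ the product is supported in $t\in Q+\operatorname{supp}\psi$ and is therefore trivially of rapid decrease in $t$ uniformly on $Q$; one then applies the partial Fourier transform $\mathcal{F}_t:\mathcal{E}(\R^d_x)\widehat{\otimes}\mathcal{S}(\R^d_t)\to\mathcal{E}(\R^d_x)\widehat{\otimes}\mathcal{S}(\R^d_\xi)$. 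So nothing forces you into a space of the form $\mathcal{D}_K\widehat{\otimes}(\cdots)$, and no change of variables or multiplier lemma is required. Your alternative --- substituting $t=x+s$ to land in $\mathcal{E}(\R^d_x)\widehat{\otimes}\mathcal{D}_{K_0}(\R^d_s)$, applying $\mathcal{F}_{s\to\xi}:\mathcal{D}_{K_0}\to\mathcal{S}$, and then multiplying by $e^{-2\pi i\xi x}$ --- is also valid: the modulation factor is indeed a continuous multiplier on $\mathcal{E}(\R^d_x)\widehat{\otimes}\mathcal{S}(\R^d_\xi)$ for the reason you give (each $x$-derivative costs a power of $(1+|\xi|)$ absorbed by the Schwartz decay, each $\xi$-derivative a power of $x$ bounded on compacts). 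What your version buys is that the compact $s$-support is fixed once and for all, at the price of the extra multiplier verification; the paper's version is shorter because it simply lets the compact $t$-support move with $x$.
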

\begin{proof}
$(i)$ It suffices to observe that we can factor $V_\psi = \mathcal{F}_t \circ S$ through the continuous linear mappings
$$
S: \mathcal{E}(\R^d_t) \rightarrow \mathcal{E}(\R^d_x) \widehat{\otimes} \mathcal{S}(\R_t^d), \quad \varphi(t) \mapsto \varphi(t)T_x{\overline{\psi}}(t)
$$
and
$$
\mathcal{F}_t: \mathcal{E}(\R^d_x) \widehat{\otimes} \mathcal{S}(\R_t^d) \rightarrow \mathcal{E}(\R^d_x) \widehat{\otimes}\mathcal{S}(\R_\xi^d), \, \varphi(x,t) \mapsto \int_{\R^d} \varphi(x,t)e^{-2\pi i \xi t} \dt.
$$

$(ii)$ Since $\operatorname{supp} \overline{M_\xi T_x \psi} \subseteq \operatorname{supp} \psi + x$ for all $(x,\xi) \in \R^{2d}$, we obtain that $V_\psi f(x,\xi) = \langle f, \overline{M_\xi T_x \psi} \rangle = 0$ for all $(x, \xi) \notin (\operatorname{supp} f - \operatorname{supp} \psi) \times \R^d$, that is, $\operatorname{supp} V_\psi f \subseteq (\operatorname{supp} f - \operatorname{supp} \psi) \times \R^d$. The second part follows from Lemma \ref{well-defined-STFT}.
\end{proof}
\begin{corollary}\label{desing-E-dual}
Let $\psi \in \mathcal{D}(\R^d) \backslash \{0\}$ and $\gamma \in \mathcal{D}(\R^d)$ be a synthesis window for $\psi$. Then, the desingularization formula 
\eqref{desing-D-dual} holds for all $f \in \mathcal{E}'(\R^{d})$ and $\varphi \in \mathcal{E}(\R^d)$. 
\end{corollary}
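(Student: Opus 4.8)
The plan is to reduce everything to the already-established desingularization formula \eqref{desing-D-dual} for test functions, by cutting $\varphi$ off near $\operatorname{supp} f$. Since $f \in \mathcal{E}'(\R^d)$, the set
\[
L := (\operatorname{supp} f) \cup \bigl(\operatorname{supp} f - \operatorname{supp} \psi + \operatorname{supp} \gamma\bigr)
\]
is compact, so I would first fix $\chi \in \mathcal{D}(\R^d)$ with $\chi \equiv 1$ on an open neighborhood of $L$. Then $\chi \varphi \in \mathcal{D}(\R^d)$ and, because $\chi \equiv 1$ near $\operatorname{supp} f$, we have $\langle f, \varphi \rangle = \langle f, \chi \varphi \rangle$. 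Applying Proposition \ref{STFT-D-dual} (i.e.\ \eqref{desing-D-dual}) to $f \in \mathcal{D}'(\R^d)$ and $\chi\varphi \in \mathcal{D}(\R^d)$ gives
\[
\langle f, \varphi \rangle = \langle f, \chi\varphi \rangle = \frac{1}{(\gamma, \psi)_{L^2}} \int\int_{\R^{2d}} V_\psi f(x,\xi)\, V_{\overline{\gamma}}(\chi\varphi)(x,-\xi)\, \dx \dxi .
\]

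Next I would compare $V_{\overline{\gamma}}(\chi\varphi)$ with $V_{\overline{\gamma}}\varphi$. From the explicit formula $V_{\overline{\gamma}}\phi(x,\xi) = \int_{\R^d} \phi(t)\gamma(t-x)e^{-2\pi i \xi t}\dt$, the value $V_{\overline{\gamma}}\phi(x,\xi)$ depends only on the restriction of $\phi$ to $x + \operatorname{supp}\gamma$. For $x$ in the compact set $K := \operatorname{supp} f - \operatorname{supp}\psi$ one has $x + \operatorname{supp}\gamma \subseteq L$, where $\chi \equiv 1$; hence $\chi\varphi$ and $\varphi$ agree on $x + \operatorname{supp}\gamma$ and therefore $V_{\overline{\gamma}}(\chi\varphi)(x,\xi) = V_{\overline{\gamma}}\varphi(x,\xi)$ for all $(x,\xi) \in K \times \R^d$. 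On the other hand, by Proposition \ref{STFT-E}$(ii)$ we have $\operatorname{supp} V_\psi f \subseteq K \times \R^d$, so in the integral above the factor $V_{\overline{\gamma}}(\chi\varphi)(x,-\xi)$ may be replaced by $V_{\overline{\gamma}}\varphi(x,-\xi)$ without changing the integral, which is precisely \eqref{desing-D-dual} for the pair $(f,\varphi)$. It also needs to be checked that the right-hand side makes sense in this generality: $V_{\overline{\gamma}}\varphi \in \mathcal{E}(\R^d_x)\widehat\otimes\mathcal{S}(\R^d_\xi)$ by Proposition \ref{STFT-E}$(i)$ applied to the window $\overline{\gamma} \in \mathcal{D}(\R^d)$, the reflection $(x,\xi)\mapsto(x,-\xi)$ preserves this Fr\'echet space, and $V_\psi f$ pairs continuously with it by Proposition \ref{STFT-E}$(ii)$, so the integral converges absolutely.

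There is no serious obstacle here; the only point requiring care is the bookkeeping of the supports, namely choosing $\chi$ so that it simultaneously forces $\langle f,\varphi\rangle = \langle f, \chi\varphi\rangle$ (which needs $\chi \equiv 1$ near $\operatorname{supp} f$) and $V_{\overline{\gamma}}(\chi\varphi) = V_{\overline{\gamma}}\varphi$ on $K \times \R^d$ (which needs $\chi \equiv 1$ on $K + \operatorname{supp}\gamma$), which is exactly why $L$ is taken to be the union of the two relevant compact sets. An alternative would be a density argument, approximating $\varphi \in \mathcal{E}(\R^d)$ by elements of $\mathcal{D}(\R^d)$ in $\mathcal{E}(\R^d)$ and invoking continuity of both sides in $\varphi$, but the cutoff argument above is more elementary and sidesteps any continuity statement for $f \mapsto V_\psi f$ on $\mathcal{E}'(\R^d)$.
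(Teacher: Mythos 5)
Your proof is correct, but it takes a different route from the paper. The paper's argument is the density one you mention at the end: it picks $\varphi_n \in \mathcal{D}(\R^d)$ with $\varphi_n \to \varphi$ in $\mathcal{E}(\R^d)$, writes $\langle f,\varphi_n\rangle = \langle V_\psi f, \overline{V_\gamma\overline{\varphi_n}}\rangle$ via \eqref{desing-D-dual}, and passes to the limit using both parts of Proposition \ref{STFT-E} (continuity of $V_\gamma$ on $\mathcal{E}(\R^d)$ and the fact that $V_\psi f$ acts continuously on $\mathcal{E}(\R^d_x)\widehat\otimes\mathcal{S}(\R^d_\xi)$). Your cutoff argument instead localizes $\varphi$ once and for all: the choice of $L = (\operatorname{supp} f) \cup (\operatorname{supp} f - \operatorname{supp}\psi + \operatorname{supp}\gamma)$ is exactly right, since $\chi \equiv 1$ near $\operatorname{supp} f$ gives $\langle f,\varphi\rangle = \langle f,\chi\varphi\rangle$, while $\chi \equiv 1$ on $K + \operatorname{supp}\gamma$ with $K = \operatorname{supp} f - \operatorname{supp}\psi$ forces $V_{\overline{\gamma}}(\chi\varphi) = V_{\overline{\gamma}}\varphi$ precisely where $V_\psi f$ is supported (Proposition \ref{STFT-E}$(ii)$). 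What your version buys is that it replaces a limit interchange by a purely finite support computation, needing only the statement of Proposition \ref{STFT-E}$(ii)$ for the support and growth of $V_\psi f$ (to see that both integrals converge absolutely) rather than the continuity of the pairing along a sequence; what the paper's version buys is brevity and the fact that it reuses the already-established continuity statements verbatim. Both are complete proofs.
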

\begin{proof}
Let $f \in \mathcal{E}'(\R^d)$ and  $\varphi \in \mathcal{E}(\R^d)$ be arbitrary. Choose a sequence $(\varphi_n)_{n \in \N} \subset \mathcal{D}(\R^d)$ such that $\varphi_n \rightarrow \varphi$ in $\mathcal{E}(\R^d)$. Hence, the desingularization formula \eqref{desing-D-dual} and Proposition \ref{STFT-E} imply that
$$
\langle f, \varphi \rangle = \lim_{n \to \infty} \langle f, \varphi_n \rangle = \lim_{n \to \infty} \langle V_\psi f, \overline{V_\gamma \overline{\varphi_n}} \rangle =  \langle V_\psi f, \overline{V_\gamma \overline{\varphi}} \rangle,
$$
so that \eqref{desing-D-dual} holds for $f$ and $\varphi$.
\end{proof}
\section{Weighted inductive limits of spaces of smooth functions}\label{sect-indlimit-smooth}
In this section, we introduce two general classes of weighted inductive limits of spaces of smooth functions defined via a decreasing sequence of positive continuous functions. Our main goal is to characterize the completeness of these spaces in terms of the defining sequence of weight functions. In order to do so, we first recall several regularity conditions for $(LF)$-spaces. Furthermore, we also establish the mapping properties of the STFT on these spaces; we shall repeatedly use the latter results in Sections \ref{L1-convolutors} and \ref{GS-convolutors}.
\subsection{Regularity conditions for $(LF)$-spaces}\label{sect-reg}
A lcHs $E$ is called an $(LF)$-space if there is a sequence $(E_N)_{N \in \N}$ of Fr\'echet spaces with $E_N \subset E_{N + 1}$ and continuous inclusion mappings such that $E = \bigcup_{N \in \N} E_N$ and the topology of $E$ coincides with the finest locally convex topology for which all inclusion mappings $E_n \rightarrow E$ are continuous. We emphasize that, for us, $(LF)$-spaces are Hausdorff by definition. We call  $(E_N)_{N}$ a defining inductive spectrum for $E$ and write $E = \varinjlim_{N}E_N$. If each $E_N$ is a Banach space, $E$ is called an $(LB)$-space. 

Let $E = \varinjlim_{N}E_N$ be an $(LF)$-space. We shall consider the following regularity conditions on $E$:
\begin{itemize}
\item[$(i)$] $E$ is said to be \emph{boundedly retractive} if for every bounded set $B$ in $E$ there is $N \in \N$ such that $B$ is contained in $E_N$, and $E$ and $E_N$ induce the same topology on $B$.
\item[$(ii)$] $E$ is said to be \emph{regular} if for every bounded set $B$ in $E$ there is $N \in \N$ such that $B$ is contained and bounded in $E_N$.
\item[$(iii)$] $E$ is said to be $\beta$-\emph{regular} if for every $N \in \N$ and every subset $B$ of $E_N$ that is bounded in $E$ there is $M \geq N$ such that $B$ is bounded in $E_M$.
\item[$(iv)$] $E$ is said to satisfy condition $(wQ)$ if for every $N \in \N$ there are a neighborhood $U$ of $0$ in $E_N$ and $M \geq N$ such that for every $K \geq M$ and every neighborhood $W$ of $0$ in $E_M$ there are a neighborhood $V$ of $0$ in $E_K$ and $\lambda > 0$ such that $V \cap U \subseteq \lambda W$. If $ (\| \: \cdot \: \|_{N,n})_{n \in \N}$ is a fundamental increasing sequence of seminorms for $E_N$, then $E$ satisfies $(wQ)$ if and only if
\begin{gather*}
\forall N \in \N \, \exists M \geq N \, \exists n \in \N \, \forall K \geq M \, \forall m \in \N \, \exists k \in \N \, \exists C > 0\, \forall e \in E_N: \\
\|e\|_{M,m} \leq C(\|e\|_{N,n} + \|e\|_{K,k}).
\end{gather*}
\end{itemize}
Finally, $E$ is said to be \emph{boundedly stable} if for every $N \in \N$ and every bounded set $B$ in $E_N$ there is $M \geq N$ such that for every $K \geq M$ the spaces $E_M$ and $E_K$ induce the same topology on $B$. Grothendieck's factorization theorem (see e.g.\ \cite[p.\ 225]{Kothe}) implies that all of these conditions do not depend on the defining inductive spectrum of $E$. This justifies calling an $(LF)$-space boundedly retractive, etc., if one (and thus all) of its defining inductive spectra has this property. 
The following result shall be of crucial importance to us.
\begin{theorem} \label{reg-cond}
Let $E$ be an $(LF)$-space. Consider the following conditions:
\begin{itemize}
\item[$(i)$] $E$ is boundedly retractive.
\item[$(ii)$] $E$ is complete.
\item[$(iii)$] $E$ is regular.
\item[$(iv)$] $E$  is $\beta$-regular.
\item[$(v)$] $E$ satisfies $(wQ)$.
\end{itemize}
Then, $(i) \Rightarrow (ii) \Rightarrow (iii) \Rightarrow (iv) \Rightarrow (v)$. Moreover, if $E$ is boundedly stable, then $(v) \Rightarrow (i)$.
\end{theorem}
We refer to \cite[Sect.\ 6]{Wengenroth} (see also \cite{Wengenroth-96} and the references therein) for the proof of and more information on Theorem \ref{reg-cond}.

\subsection{Regularity properties} We now introduce the weighted $(LF)$-spaces of smooth functions that we shall be concerned with.
Let $v$ be a non-negative function on $\R^d$ and let $n \in \N$. We define $\mathcal{B}^n_v(\R^d)$ as the seminormed space consisting of all $\varphi \in C^n(\R^d)$ such that
$$
\|\varphi\|_{v,n} := \max_{|\alpha| \leq n} \sup_{x \in \R^d} |\partial^{\alpha}\varphi(x)|v(x) < \infty. 
$$
The closure of $\mathcal{D}(\R^d)$ in $\mathcal{B}^n_v(\R^d)$ is denoted by ${\dot{\mathcal{B}}}^n_v(\R^d)$. Clearly, the latter space consists of all $\varphi \in C^n(\R^d)$ such that
$$
\lim_{|x| \to \infty}|\partial^{\alpha}\varphi(x)|v(x) = 0
$$
for all $|\alpha| \leq n$ and we also endow it with the seminorm $\|\:\cdot \:\|_{v,n}$. If $v$ is positive and $v^{-1}$ is locally bounded, $\mathcal{B}^n_v(\R^d)$ and ${\dot{\mathcal{B}}}^n_v(\R^d)$ are Banach spaces. These requirements are fulfilled if $v$ is positive and continuous.
 For $n=0$ we simply write $\mathcal{B}^0_v(\R^d) = Cv(\R^d)$, ${\dot{\mathcal{B}}}^0_v(\R^d) = C(v)_0(\R^d)$ and $\|\:\cdot \:\|_{v,0} = \|\:\cdot \:\|_v$. 
Furthermore, we set
$$
\mathcal{B}_v(\R^d):= \varprojlim_{n \in \N} \mathcal{B}^n_{v}(\R^d), \qquad \dot{\mathcal{B}}_v(\R^d) := \varprojlim_{n \in \N} {\dot{\mathcal{B}}}^n_{v}(\R^d).
$$
A (pointwise) decreasing sequence $\mathcal{V} = (v_{N})_{N \in \N}$ of positive continuous functions on $\R^d$ is called a \emph{decreasing weight system}. We define the following $(LF)$-spaces
$$
\mathcal{B}_{\mathcal{V}}(\R^d) := \varinjlim_{N \in \N} \mathcal{B}_{v_N}(\R^d), \qquad \dot{\mathcal{B}}_{\mathcal{V}}(\R^d) := \varinjlim_{N \in \N} \dot{\mathcal{B}}_{v_N}(\R^d).
$$
\begin{remark}\label{V-remark}
If $\mathcal{V} = (v_N)_N$ satisfies condition $(V)$ (cf.\ \cite[p.\ 114]{B-M-S}), i.e.,
$$
\forall N \in \N \, \exists  M > N: \lim_{|x| \to \infty} v_M(x)/v_N(x) = 0,
$$
then $\mathcal{B}_{\mathcal{V}}(\R^d) = \dot{\mathcal{B}}_{\mathcal{V}}(\R^d)$ and this space is an $(LFS)$-space (= inductive limit of $(FS)$-spaces).
\end{remark}
 We shall often need to impose the following mild condition on $\mathcal{V}$:
\begin{equation}
\forall N \in \N\, \exists \widetilde{N} \geq N \, : \, g_{N,\widetilde{N}} = \sup_{x \in \R^d}\frac{v_{\widetilde{N}}(x + \: \cdot \:)}{v_N(x)} \in L^\infty_{\operatorname{loc}}(\R^d).
\label{locally-bounded-decreasing}
\end{equation}

We are ready to discuss the regularity properties of the $(LF)$-spaces $\mathcal{B}_{\mathcal{V}}(\R^d)$ and $\dot{\mathcal{B}}_{\mathcal{V}}(\R^d)$.  We need the following definition.
\begin{definition} 
A decreasing weight system  $\mathcal{V} =(v_N)_N$ is said to satisfy $(\Omega)$ if
\begin{gather*}
\forall N \in \N \, \exists M \geq N \, \forall K \geq M \, \exists \theta \in (0,1) \, \exists C > 0 \, \forall x \in \R^d: \\
v_M(x) \leq C {v_N(x)}^{1-\theta}{v_K(x)}^{\theta}.
\end{gather*}
\end{definition}
\begin{theorem}\label{completeness-ind-lim-smooth}
Let $\mathcal{V} = (v_N)_{N}$ be a decreasing weight system satisfying \eqref{locally-bounded-decreasing}. Then, the following statements are equivalent:
\begin{itemize}
\item[$(i)$] $\mathcal{V}$ satisfies $(\Omega)$.
\item[$(ii)$] $\mathcal{B}_\mathcal{V}(\R^d)$ is boundedly retractive.
\item[$(iii)$] $\mathcal{B}_\mathcal{V}(\R^d)$ satisfies $(wQ)$.
\item[$(ii)'$] $\dot{\mathcal{B}}_\mathcal{V}(\R^d)$ is boundedly retractive.
\item[$(iii)'$] $\dot{\mathcal{B}}_\mathcal{V}(\R^d)$ satisfies $(wQ)$.
\end{itemize}
\end{theorem}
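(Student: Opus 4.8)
The plan is to close the two logical loops $(i)\Rightarrow(ii)\Rightarrow(iii)\Rightarrow(i)$ and $(i)\Rightarrow(ii)'\Rightarrow(iii)'\Rightarrow(i)$. The implications $(ii)\Rightarrow(iii)$ and $(ii)'\Rightarrow(iii)'$ are immediate from the chain \eqref{implications-regularity}, so the substance is in $(iii)\Rightarrow(i)$, $(iii)'\Rightarrow(i)$ and $(i)\Rightarrow(ii)$, $(i)\Rightarrow(ii)'$.

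For $(iii)\Rightarrow(i)$ and $(iii)'\Rightarrow(i)$ I would test the seminorm form of $(wQ)$ on translated and modulated bumps. Fix $\chi\in\mathcal{D}(\R^d)$ with $\chi(0)=1$ and $\operatorname{supp}\chi$ in a ball of fixed radius $R$, and for $x_0\in\R^d$, $t\geq 1$ put $\varphi_{x_0,t}:=M_{te_1}T_{x_0}\chi\in\mathcal{D}(\R^d)\subseteq\dot{\mathcal{B}}_{v_N}(\R^d)$; since these functions lie in every step space of both spectra, one computation handles $(iii)$ and $(iii)'$ at once. A direct estimate gives $\|\varphi_{x_0,t}\|_{v_P,p}\leq C(1+t)^{p}\sup_{|y|\leq R}v_P(x_0+y)$ for every index $P$ and order $p$, while $\|\varphi_{x_0,t}\|_{v_M,m}\geq c\,t^{m}v_M(x_0)$ for $t$ large. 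Using \eqref{locally-bounded-decreasing} applied to the indices playing the role of $N$ and $K$ in $(\Omega)$ --- the only place the standing assumption enters here --- one bounds $\sup_{|y|\leq R}v_{\widetilde N}(x_0+y)$ and $\sup_{|y|\leq R}v_{\widetilde K}(x_0+y)$ by $v_N(x_0)$ and $v_K(x_0)$ up to constants uniform in $x_0$. Feeding this into $(wQ)$ yields $t^{m}v_M(x_0)\leq C(t^{n}v_N(x_0)+t^{k}v_K(x_0))$ for all $x_0$ and all large $t$, with $n$ fixed; arranging $n<m<k$ (take $m=n+1$ and enlarge $k$ if necessary) and optimizing the free parameter $t$ produces $v_M(x_0)\leq C\,v_N(x_0)^{1-\theta}v_K(x_0)^{\theta}$ with $\theta=(m-n)/(k-n)$ on the region where $v_N(x_0)/v_K(x_0)$ is large, while on the complementary region the same inequality is immediate from $v_K\leq v_M\leq v_N$. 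This is exactly $(\Omega)$.

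For $(i)\Rightarrow(ii)$ and $(i)\Rightarrow(ii)'$ I would invoke Theorem \ref{reg-cond}: it suffices to show that $(\Omega)$ forces $\mathcal{B}_\mathcal{V}(\R^d)$ (resp.\ $\dot{\mathcal{B}}_\mathcal{V}(\R^d)$) to be boundedly stable and to satisfy $(wQ)$. Bounded stability is straightforward: if $B$ is bounded in $\mathcal{B}_{v_N}(\R^d)$ and $M$ is as in $(\Omega)$, then for $K\geq M$ the pointwise interpolation $v_M\leq C\,v_N^{1-\theta}v_K^{\theta}$ yields $\|\psi\|_{v_M,p}\leq C\|\psi\|_{v_N,p}^{1-\theta}\|\psi\|_{v_K,p}^{\theta}$ for all $\psi$ and all $p$, so on $B-B$ each $\mathcal{B}_{v_M}$-seminorm is dominated by a power of the corresponding $\mathcal{B}_{v_K}$-seminorm, whence $\mathcal{B}_{v_M}(\R^d)$ and $\mathcal{B}_{v_K}(\R^d)$ induce the same topology on $B$ (and likewise for $\dot{\mathcal{B}}$). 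The verification of $(wQ)$ is the delicate point, and it is again where \eqref{locally-bounded-decreasing} is essential: the step space $\mathcal{B}_{v_N}(\R^d)$ carries the whole scale of seminorms $\|\cdot\|_{v_N,n}$ indexed by the derivative order, whereas $(wQ)$ demands a single source order valid for all target orders. To produce it one couples $(\Omega)$ with weighted Landau--Kolmogorov (Taylor-type) interpolation inequalities between derivative orders, which are available precisely because \eqref{locally-bounded-decreasing} allows a derivative order to be traded for a controlled shift of the weight index. An alternative, more structural packaging is to use the STFT (Propositions \ref{STFT-D} and \ref{STFT-D-dual}) to realize $\mathcal{B}_\mathcal{V}(\R^d)$ (resp.\ $\dot{\mathcal{B}}_\mathcal{V}(\R^d)$) as a complemented subspace of $\big(\varinjlim_{N}Cv_N(\R^d_x)\big)\widehat{\otimes}\mathcal{S}(\R^d_\xi)$ (resp.\ the same with $C(v_N)_0$), thereby reducing bounded retractivity to the classical regularity theory of weighted $(LB)$-spaces of continuous functions, in which $(\Omega)$ is equivalent to $(wQ)$ and to bounded retractivity.

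I expect the verification of $(wQ)$ from $(\Omega)$ to be the main obstacle: it is the step where the projective (derivative-order) and inductive (weight) parameters must be genuinely decoupled, and it is also the part in closest contact with the splitting-theoretic ideas around Vogt's condition $(\Omega)$ alluded to in the introduction. Everything else --- the bump-function extraction, bounded stability, and the passage through \eqref{implications-regularity} and Theorem \ref{reg-cond} --- is comparatively routine once the right test functions and the index bookkeeping dictated by \eqref{locally-bounded-decreasing} are in place.
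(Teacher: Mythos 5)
Your proposal is correct and follows essentially the same architecture as the paper: $(i)\Rightarrow(ii),(ii)'$ via Wengenroth's criterion (Theorem \ref{reg-cond}), with bounded stability obtained from the pointwise interpolation $v_M\leq Cv_N^{1-\theta}v_K^{\theta}$ exactly as in Lemma \ref{boundedly-stable-smooth}, and $(wQ)$ obtained from a Gorny/Landau--Kolmogorov interpolation in the derivative order combined with the index shifts permitted by \eqref{locally-bounded-decreasing} --- this is precisely Lemma \ref{gorny-wQ}, which you correctly identify as the crux but do not carry out (the paper imports the multivariate Gorny inequality from Frerick and localizes to translated unit cubes). The one place you genuinely diverge is $(iii)'\Rightarrow(i)$: the paper routes this through the isomorphism $\mathcal{D}_{[-1,1]^d}\cong s$, condition $(S_2)^{*}$ for the pair $(s,\mathcal{V})$, and evaluation on the unit vectors of $s$ (Lemma \ref{suff-Omega}), whereas you test the seminorm form of $(wQ)$ directly on the modulated bumps $M_{te_1}T_{x_0}\chi$; the resulting inequality $t^{m}v_M(x_0)\leq C(t^{n}v_N(x_0)+t^{k}v_K(x_0))$ and the minimization over $t$ are the same computation as the paper's minimization over $r$, so your version is a more self-contained, equivalent packaging that avoids the abstract detour. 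The index bookkeeping you describe (pre- and post-composing with \eqref{locally-bounded-decreasing} to absorb the $\sup_{|y|\leq R}$ over the support of the bump) matches the paper's handling of $\widetilde N$, $\widetilde M$, $\widetilde K$, and your closing of the two loops yields the same five-fold equivalence.
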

The proof of Theorem \ref{completeness-ind-lim-smooth} is based on the ensuing three lemmas. 
\begin{lemma} \label{boundedly-stable-smooth}
Let $\mathcal{V} = (v_N)_{N}$ be a decreasing weight system satisfying $(\Omega)$. Then, $\mathcal{B}_\mathcal{V}(\R^d)$ and $\dot{\mathcal{B}}_\mathcal{V}(\R^d)$ are boundedly stable.
\end{lemma}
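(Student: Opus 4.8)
The plan is to read bounded stability off the interpolation inequality encoded in condition $(\Omega)$. The first step is to observe the following consequence of $(\Omega)$: given $N\in\N$, let $M\geq N$ be the index provided by $(\Omega)$; then for each $K\geq M$ there are $\theta\in(0,1)$ and $C>0$ such that
$$
\|\varphi\|_{v_M,n}\leq C\,\|\varphi\|_{v_N,n}^{1-\theta}\,\|\varphi\|_{v_K,n}^{\theta}
$$
for all $n\in\N$ and all $\varphi\in\mathcal{B}_{v_N}(\R^d)$. This is immediate: for $|\alpha|\leq n$ one has, by $(\Omega)$, the pointwise bound $|\partial^\alpha\varphi(x)|\,v_M(x)\leq C\bigl(|\partial^\alpha\varphi(x)|\,v_N(x)\bigr)^{1-\theta}\bigl(|\partial^\alpha\varphi(x)|\,v_K(x)\bigr)^{\theta}$, and taking the supremum over $x\in\R^d$ and then the maximum over $|\alpha|\leq n$ gives the claim. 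The same inequality holds with $\dot{\mathcal{B}}$ everywhere in place of $\mathcal{B}$, since $\dot{\mathcal{B}}_{v_N}(\R^d)$ carries the restricted norms $\|\cdot\|_{v_N,n}$.

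Second, fix $N\in\N$ and a set $B\subseteq\mathcal{B}_{v_N}(\R^d)$ bounded in $\mathcal{B}_{v_N}(\R^d)$, and put $A_n:=\sup_{\varphi\in B}\|\varphi\|_{v_N,n}<\infty$ for $n\in\N$. Let $M\geq N$ be the index from $(\Omega)$; crucially, it depends only on $N$, not on $B$. Fix any $K\geq M$, with associated constants $\theta\in(0,1)$ and $C>0$. Since $\mathcal{V}$ is decreasing we have a continuous inclusion $\mathcal{B}_{v_M}(\R^d)\hookrightarrow\mathcal{B}_{v_K}(\R^d)$, so on $B$ the topology induced by $\mathcal{B}_{v_K}(\R^d)$ is coarser than the one induced by $\mathcal{B}_{v_M}(\R^d)$; it remains to establish the reverse. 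As both Fr\'echet spaces are metrizable, it suffices to show that the identity is sequentially continuous from $B$ equipped with the topology of $\mathcal{B}_{v_K}(\R^d)$ to $B$ equipped with the topology of $\mathcal{B}_{v_M}(\R^d)$. If $\varphi_j,\varphi_0\in B$ and $\varphi_j\to\varphi_0$ in $\mathcal{B}_{v_K}(\R^d)$, then $\varphi_j-\varphi_0\in B-B$ forces $\|\varphi_j-\varphi_0\|_{v_N,n}\leq 2A_n$, so the interpolation inequality yields
$$
\|\varphi_j-\varphi_0\|_{v_M,n}\leq C\,(2A_n)^{1-\theta}\,\|\varphi_j-\varphi_0\|_{v_K,n}^{\theta},
$$
and the right-hand side tends to $0$ as $j\to\infty$, for every $n\in\N$. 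Hence $\varphi_j\to\varphi_0$ in $\mathcal{B}_{v_M}(\R^d)$, so $\mathcal{B}_{v_M}(\R^d)$ and $\mathcal{B}_{v_K}(\R^d)$ induce the same topology on $B$; this is precisely bounded stability of $\mathcal{B}_{\mathcal{V}}(\R^d)$.

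The argument for $\dot{\mathcal{B}}_{\mathcal{V}}(\R^d)$ is word for word the same, using the interpolation inequality in its $\dot{\mathcal{B}}$ form and the fact that $\dot{\mathcal{B}}_{v_N}(\R^d)$ is a subspace of $\mathcal{B}_{v_N}(\R^d)$ carrying the induced norms. I do not expect any genuine obstacle here; the only points that need a little care are tracking the quantifier dependence of $M,\theta,C$ in $(\Omega)$ and the routine passage from the pointwise estimate to an estimate between the seminorms of the Fr\'echet spaces.
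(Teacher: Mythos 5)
Your proposal is correct and follows essentially the same route as the paper: both arguments reduce bounded stability to the seminorm interpolation inequality $\|\varphi\|_{v_M,n}\leq C\|\varphi\|_{v_N,n}^{1-\theta}\|\varphi\|_{v_K,n}^{\theta}$ obtained pointwise from $(\Omega)$, and then compare the induced topologies on a bounded set $B$ of $\mathcal{B}_{v_N}(\R^d)$ (the paper via explicit neighborhood inclusions $V\cap B\subseteq U(n,\varepsilon)$, you via sequential continuity on the metrizable trace topologies, which is a purely presentational difference). Your handling of the $\dot{\mathcal{B}}$ case and of the quantifier dependence of $M$, $\theta$, $C$ also matches the paper.
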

\begin{proof}
Notice that $\dot{\mathcal{B}}_\mathcal{V}(\R^d)$ is boundedly stable if $\mathcal{B}_\mathcal{V}(\R^d)$ is so. Hence, it suffices to show that $\mathcal{B}_\mathcal{V}(\R^d)$ is boundedly stable. Let $N \in \N$ be arbitrary and choose $M \geq N$ according to $(\Omega)$. We shall show that for all $K \geq M$ the spaces $\mathcal{B}_{v_K}(\R^d)$ and $\mathcal{B}_{v_M}(\R^d)
$ induce the same topology on the bounded sets $B$ of  $\mathcal{B}_{v_N}(\R^d)$. We only need to prove that the topology induced by $\mathcal{B}_{v_K}(\R^d)$ is finer than the 
one induced by $\mathcal{B}_{v_M}(\R^d)$. Consider the basis of neighborhoods of $0$ in $\mathcal{B}_{v_M}(\R^d)$ given by 
$$
U(n,\varepsilon) =  \{ \varphi \in \mathcal{B}_{v_M}(\R^d) \, : \, \|\varphi \|_{v_M,n} \leq \varepsilon \}, \qquad n \in \N,\varepsilon > 0.
$$
Let $n \in \N$ and $\varepsilon > 0$ be arbitrary. Choose $\theta \in (0,1)$ and $C > 0$ such that $v_M(x) \leq C {v_N(x)}^{1-\theta}v_K(x)^{\theta}$ for all $x \in \R^d$. Set  $\delta = (\varepsilon/C)^{1/\theta}(\sup_{\varphi \in B} \|\varphi\|_{v_N,n})^{-(1-\theta)/\theta}$ and $V = \{ \varphi \in \mathcal{B}_{v_K}(\R^d)  \, : \, \| \varphi \|_{v_K,n} \leq \delta \}$. We claim that $V \cap B \subseteq U(n,\varepsilon)$. Indeed, for $\varphi \in V \cap B$ we have that
\begin{align*}
\| \varphi \|_{v_M,n} &=\max_{|\alpha| \leq n} \sup_{x \in \R^d} |\partial^{\alpha}\varphi(x)|v_M(x) \leq C\max_{|\alpha| \leq n}\sup_{x \in \R^d} (|\partial^{\alpha}\varphi(x)| {v_N}(x))^{1-\theta}(|\partial^{\alpha}\varphi(x)|v_K(x))^{\theta} \\
& \leq C\| \varphi\|^{1-\theta}_{v_N,n}\| \varphi\|^\theta_{v_K,n} \leq \varepsilon.
\qedhere
\end{align*}
\end{proof}
\begin{lemma}\label{gorny-wQ}
Let $\mathcal{V} = (v_N)_{N}$ be a decreasing weight system satisfying \eqref{locally-bounded-decreasing} and $(\Omega)$. Then, 
\begin{gather*}
\forall N \in \N \, \exists M \geq N \, \forall K \geq M \, \forall m \in \N \, \exists k \in \N \, \exists \rho \in (0,1) \, \exists C > 0\, \forall \varphi \in \mathcal{B}_{v_N}(\R^d) : \\
\|\varphi\|_{v_M,m} \leq C(\|\varphi\|^{1-\rho}_{v_N} \|\varphi\|^{\rho}_{v_K,k}).
\end{gather*}
\end{lemma}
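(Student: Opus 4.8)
The plan is to establish the estimate pointwise --- for each $x\in\R^d$ and each $\alpha$ with $|\alpha|\le m$ --- by combining \emph{Gorny's inequality} (a Landau--Kolmogorov type bound for intermediate derivatives) with the two hypotheses on $\mathcal V$: condition \eqref{locally-bounded-decreasing} will be used to transfer weights from a point to its unit neighbourhood, and $(\Omega)$ will supply the multiplicative splitting of $v_M$.

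First I would fix all indices, respecting the quantifier order. Given $N$, use \eqref{locally-bounded-decreasing} with $N$ to get $\widetilde N\ge N$ and put $c_0:=\sup_{|h|\le1}g_{N,\widetilde N}(h)<\infty$; this encodes $v_{\widetilde N}(x)\le c_0 v_N(y)$ whenever $|x-y|\le1$. Then apply $(\Omega)$ with $N$ replaced by $\widetilde N$ to obtain the $M\ge\widetilde N\ge N$ required by the lemma. Now let $K\ge M$ and $m\in\N$ be given: use \eqref{locally-bounded-decreasing} with $K$ to get $\widetilde K\ge K$ and $c_1:=\sup_{|h|\le1}g_{K,\widetilde K}(h)<\infty$, so that $v_{\widetilde K}(x)\le c_1 v_K(y)$ for $|x-y|\le1$; and apply $(\Omega)$ (from the previous step) with the large index taken to be $\widetilde K$ --- legitimate since $\widetilde K\ge K\ge M$ --- to obtain $\theta\in(0,1)$ and $C_0>0$ with $v_M(x)\le C_0\,v_{\widetilde N}(x)^{1-\theta}v_{\widetilde K}(x)^{\theta}$ for all $x$. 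Finally set $\rho:=\theta$ and choose $k\in\N$ with $k>m$ and $k\ge m/\theta$, so that $|\alpha|/k\le m/k\le\theta$ for every $|\alpha|\le m$.

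The analytic core is Gorny's inequality in unit-ball form: since $k>|\alpha|$, there is $C_1=C_1(d,m,k)\ge1$ such that, writing $P:=\sup_{|y-x|\le1}|\varphi(y)|$ and $Q:=\max_{|\beta|\le k}\sup_{|y-x|\le1}|\partial^\beta\varphi(y)|$, one has $|\partial^\alpha\varphi(x)|\le C_1\,P^{1-|\alpha|/k}Q^{|\alpha|/k}$ for every $|\alpha|\le m$ (the one-variable Gorny inequality on $[x-1,x+1]$, applied along line segments through $x$, yields the several-variable version; since $\beta=0$ is admitted in $Q$, we have $0\le P\le Q$). The decisive algebraic point is that $P\le Q$ and $\theta-|\alpha|/k\ge0$ give
$$P^{1-|\alpha|/k}Q^{|\alpha|/k}=P^{1-\theta}Q^{\theta}\,(P/Q)^{\theta-|\alpha|/k}\le P^{1-\theta}Q^{\theta},$$
so in fact $|\partial^\alpha\varphi(x)|\le C_1 P^{1-\theta}Q^{\theta}$ for \emph{all} $|\alpha|\le m$, now with the exponent $\theta$ of $(\Omega)$. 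Multiplying by $v_M(x)\le C_0 v_{\widetilde N}(x)^{1-\theta}v_{\widetilde K}(x)^{\theta}$ and regrouping gives $|\partial^\alpha\varphi(x)|v_M(x)\le C_0C_1\,(P v_{\widetilde N}(x))^{1-\theta}(Q v_{\widetilde K}(x))^{\theta}$, and the two factors are controlled by \eqref{locally-bounded-decreasing}: $P v_{\widetilde N}(x)\le c_0\sup_{|y-x|\le1}|\varphi(y)|v_N(y)\le c_0\|\varphi\|_{v_N}$, and likewise $Q v_{\widetilde K}(x)\le c_1\|\varphi\|_{v_K,k}$. Hence $|\partial^\alpha\varphi(x)|v_M(x)\le C\,\|\varphi\|_{v_N}^{1-\rho}\|\varphi\|_{v_K,k}^{\rho}$ with $\rho=\theta\in(0,1)$ and $C=C_0C_1\max(c_0,1)\max(c_1,1)$; taking the supremum over $x$ and the maximum over $|\alpha|\le m$ finishes the proof (the right-hand side is finite since $v_K\le v_N$ forces $\|\varphi\|_{v_K,k}\le\|\varphi\|_{v_N,k}<\infty$ for $\varphi\in\mathcal B_{v_N}(\R^d)$).

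I expect the real obstacle to be precisely the reconciliation of the two interpolations: Gorny's inequality interpolates with the derivative fraction $|\alpha|/k$, whereas $(\Omega)$ produces the fixed exponent $\theta$. Lowering $(\Omega)$'s exponent down to $|\alpha|/k$ (using $v_{\widetilde K}\le v_{\widetilde N}$) would re-introduce a high-order $v_N$-seminorm on the right-hand side, and the estimate would then refuse to close with a single $\rho$. The trick is rather to keep $\theta$ and absorb the mismatch $\theta-|\alpha|/k$ into the harmless factor $(P/Q)^{\theta-|\alpha|/k}\le1$, which is possible exactly because the order $k$ in Gorny's inequality may be taken as large as we please, here with $m/k\le\theta$. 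The remaining ingredients --- the unit-ball, several-variable version of Gorny's inequality, and the correct (directional) use of \eqref{locally-bounded-decreasing}, i.e.\ $v_{\widetilde N}(x)\lesssim v_N(y)$ and $v_{\widetilde K}(x)\lesssim v_K(y)$ for $|x-y|\le1$ --- are routine.
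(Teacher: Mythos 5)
Your proof is correct and follows essentially the same route as the paper: the multivariate Gorny inequality on unit cubes (the paper cites \cite[Ex.\ 3.13]{Frerick} for exactly the estimate you need, so your one-line ``line segments'' sketch can simply be replaced by that reference), the same index bookkeeping $N\to\widetilde N\to M$ and $K\to\widetilde K$, and the same weight transfer via \eqref{locally-bounded-decreasing}. The only (cosmetic) difference is where the mismatch between $m/k$ and $\theta$ is absorbed: you raise the Gorny exponent to $\theta$ using $P\le Q$ and end with $\rho=\theta$, while the paper lowers the weight exponent to $m/k$ using $v_{\widetilde K}\le v_{\widetilde N}$ and ends with $\rho=m/k$ --- the same monotonicity of $s\mapsto b^{1-s}a^{s}$ either way.
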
 
\begin{proof}
The proof is based on the following  Landau-Kolmogorov type inequality for a cube (also known as Gorny's inequality in dimension one), shown in \cite[Example 3.13]{Frerick}:  For all $m,k \in \N$ with $m \leq k$ there is $C > 0$ such that
$$
\|f\|_{[-1,1]^d,m} \leq C\|f\|^{1-(m/k)}_{[-1,1]^d}\|f\|^{m/k}_{[-1,1]^d,k}, \qquad f \in C^\infty([-1,1]^d).
$$
Let $N \in \N$ be arbitrary and choose $\widetilde{N} \geq N$ as in \eqref{locally-bounded-decreasing}. Next, choose $M \geq \widetilde{N}$ according to $(\Omega)$. Let $K \geq M$ and $m \in \N$ be arbitrary. Choose $\widetilde{K} \geq K$ as in \eqref{locally-bounded-decreasing}. There are $\theta \in (0,1)$ and $C' > 0$ such that $v_M(x) \leq C'v
_{\widetilde{N}}(x)^{1-\theta}v_{\widetilde{K}}(x)^{\theta}$ for all $x \in \R^d$. Finally, let $k \in \N$ be so large that $\rho = m/k \leq \theta$. Notice that for $0 \leq a \leq b $ it holds that $b^{1-\theta}a^\theta \leq b^{1-\rho}a^\rho$. Hence, for $\varphi \in \mathcal{B}_{v_N}(\R^d)$, it 
holds that
\begin{align*}
\|\varphi\|_{v_M,m} &\leq \sup_{x \in \R^d} \|T_{-x}\varphi\|_{[-1,1]^d,m}v_M(x) \\
&\leq CC' \sup_{x \in \R^d} \|T_{-x}\varphi\|^{1-(m/k)}_{[-1,1]^d}\|T_{-x}\varphi\|^{m/k}_{[-1,1]^d,k}v_{\widetilde{N}}(x)^{1-\theta}v_{\widetilde{K}}(x)^{\theta} \\
&\leq CC'(\sup_{x \in \R^d} \|T_{-x}\varphi\|_{[-1,1]^d}v_{\widetilde{N}}(x))^{1-\rho}(\sup_{x \in \R^d} \|T_{-x}\varphi\|_{[-1,1]^d,k}v_{\widetilde{K}}(x))^\rho.
\end{align*}
The result  follows from the fact that
$$
\sup_{x \in \R^d} \|T_{-x}\varphi\|_{[-1,1]^d,j}v_{\widetilde{J}}(x) \leq \|\check{g}_{J,\widetilde{J}}\|_{[-1,1]^d} \|\varphi\|_{v_J,j}, 
$$
for all $j,J \in \N$ and $\varphi \in  \mathcal{B}_{v_J}(\R^d)$. 
\end{proof}
In the next lemma, $s$ denotes the Fr\'echet space of rapidly decreasing sequences, that is, the space consisting of all sequences $(c_j)_{j \in \N} \in \C^\N$ such that $\sup_{j \in \N}|c_j| j^k < \infty$ for all $k \in \N$.
\begin{lemma}\label{suff-Omega}
Let $\mathcal{V} = (v_N)_N$ be a decreasing weight system and let $E$ be a Fr\'echet space such that $ E \cong s$ topologically. Let $(\| \: \cdot \; \|_n)_{n \in \N}$ be a fundamental increasing sequence of seminorms  for $E$ and  suppose that the pair $(E, \mathcal{V})$ satisfies $(S_2)^*$ (cf.\ \cite[Lemma 2.1]{Albanese}), i.e.,
\begin{gather*}
\forall N \in \N \, \exists M \geq N \, \exists n \in \N \, \forall K \geq M \, \forall m \in \N \, \exists k \in \N \, \exists C > 0\, \forall e \in E \,  \forall x \in \R^d: \\
v_M(x)\|e\|_m \leq C(v_N(x)\|e\|_n + v_K(x)\|e\|_k).
\end{gather*}
Then, $\mathcal{V}$ satisfies $(\Omega)$.
\end{lemma}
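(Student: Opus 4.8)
The plan is to reduce everything to the one relevant feature of $E$, namely $E\cong s$, by testing the defining inequality of $(S_2)^*$ on the canonical basis of $s$ and then optimising the resulting purely scalar polynomial estimate in the weights $v_N(x)$.

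First I would normalise the seminorms. Write $\||c\||_n=\sup_{j\in\N}|c_j|(1+j)^n$ for the canonical fundamental increasing system of $s$. The topological isomorphism $E\cong s$ yields nondecreasing maps $\sigma,\tau\colon\N\to\N$ and positive constants $A_n,B_n$ with $\||\,\cdot\,\||_n\le A_n\|\,\cdot\,\|_{\sigma(n)}$ and $\|\,\cdot\,\|_n\le B_n\||\,\cdot\,\||_{\tau(n)}$ for all $n$. A routine rearrangement of the nested quantifiers in $(S_2)^*$ (in the input replace the produced index $n$ by $\tau(n)$, the running index by $\sigma(\,\cdot\,)$, and the produced index $k$ by $\tau(k)$, estimating the left-hand term from above via $A$ and the two right-hand terms from above via $B$) shows that $(s,\mathcal{V})$ again satisfies $(S_2)^*$, now with respect to $(\||\,\cdot\,\||_n)_n$. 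Hence I may assume from the outset that $E=s$ and that the canonical unit vectors $\delta_j$ satisfy $\|\delta_j\|_n=(1+j)^n$.

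Next I would extract a scalar inequality and optimise. Fix $N\in\N$ and let $M\ge N$ and $n\in\N$ be as provided by $(S_2)^*$. Fix $K\ge M$ and apply $(S_2)^*$ with the choice $m=n+1$; this yields $k\in\N$ and $C>0$, and since the seminorms are increasing we may enlarge $k$ so that $k\ge n+2$. Testing on $e=\delta_j$ gives
$$
v_M(x)(1+j)^{n+1}\le C\bigl(v_N(x)(1+j)^{n}+v_K(x)(1+j)^{k}\bigr),\qquad j\in\N,\ x\in\R^d .
$$
As $\mathcal{V}$ is decreasing and $N\le K$, we have $v_N(x)\ge v_K(x)>0$; for fixed $x$ set $t=(v_N(x)/v_K(x))^{1/(k-n)}\ge1$ and choose $j\in\N$ with $t\le 1+j\le 2t$, which is possible because $[t,2t]$ has length $\ge1$. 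Then $v_K(x)(1+j)^{k}\le 2^{k-n}v_N(x)(1+j)^{n}$, so the displayed estimate becomes $v_M(x)(1+j)\le C(1+2^{k-n})v_N(x)$, whence, using $1+j\ge t$,
$$
v_M(x)\le C(1+2^{k-n})\,v_N(x)\,(1+j)^{-1}\le C(1+2^{k-n})\,v_N(x)^{1-1/(k-n)}v_K(x)^{1/(k-n)} .
$$
Since $k-n\ge2$, this is exactly $(\Omega)$ with $\theta=1/(k-n)\in(0,1)$ and constant $C(1+2^{k-n})$, which is independent of $x$; as $N$ and $K\ge M$ were arbitrary, $\mathcal{V}$ satisfies $(\Omega)$.

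I expect the only point requiring genuine care to be the reduction in the second paragraph: one must verify that the precise form of $(S_2)^*$ is preserved under passing to an equivalent fundamental system of seminorms, which is a matter of correctly threading the alternating $\forall/\exists$ pattern, and one must ensure that in the normalised version the auxiliary index $k$ may always be taken at least $n+2$, so that the resulting exponent $\theta=1/(k-n)$ lies strictly in $(0,1)$. Both are straightforward but error‑prone. The optimisation step itself, which is the conceptual core, is entirely elementary.
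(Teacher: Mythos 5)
Your proposal is correct and follows essentially the same route as the paper: reduce to $E=s$ via the invariance of $(S_2)^*$ under topological isomorphism, test the inequality on the unit vectors with $m=n+1$, and optimise the resulting scalar estimate in the parameter $j$ (the paper first passes from integers $j$ to all reals $r>0$ at the cost of a factor $2^{k-m}$ and then minimises, whereas you plug in a near-optimal integer directly; both yield $\theta=1/(k-n)$). No gaps.
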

\begin{proof}
If $E$ and $F$ are topologically isomorphic Fr\'echet spaces, then $(E, \mathcal{V})$ satisfies $(S_2)^*$ if and only if  $(F, \mathcal{V})$ does so. Hence, it suffices to show that 
$\mathcal{V}$ satisfies $(\Omega)$ provided that $(s,\mathcal{V})$ satisfies $(S_2)^*$. This is essentially shown in  \cite[Thm.~4.1]{Vogt}, but we repeat the argument here for the sake of completeness. Let $N \in \N$ be arbitrary and choose $M \geq N$ as in $(S_2)^\ast$. Let $K \geq M$ be arbitrary. Choose $k \in \N$ according to $(S_2)^\ast$ for $m = n+1$. We may assume that $k > m$. By applying $(S_2)^\ast$ to the unit vectors in $s$, we obtain that there is $C \geq 1$ such that
$$
v_M(x) \leq C\left(\frac{v_N(x)}{j} + v_K(x)j^{(k-m)}\right)
$$
for all $x \in \R^d$ and $j \in \Z_+$, which implies that
$$
v_M(x) \leq 2^{k-m}C\left(\frac{v_N(x)}{r} + v_K(x)r^{(k-m)}\right)
$$
for all $x \in \R^d$ and $r >0$. The result  follows by minimizing the right-hand side for $r > 0$ (with $x \in \R^d$ fixed).
\end{proof}
\begin{proof}[Proof of Theorem \ref{completeness-ind-lim-smooth}]
The implications $(ii)  \Rightarrow (iii)$ and $(ii)'  \Rightarrow (iii)'$ hold for general $(LF)$-spaces by Theorem \ref{reg-cond}, while $(iii) \Rightarrow (iii)'$ is obvious. 
 In view of Lemmas \ref{boundedly-stable-smooth} and \ref{gorny-wQ}, $(i) \Rightarrow (ii)$ and $(i) \Rightarrow (ii)'$ follow from another application of Theorem \ref{reg-cond}.  Finally, we show $(iii)' \Rightarrow (i)$. By Lemma \ref{suff-Omega} and the fact that $\mathcal{D}_{[-1,1]^d} \cong s$ topologically \cite[Prop.\ 31.12]{M-V}, it suffices to show that  $(\mathcal{D}_{[-1,1]^d}, \mathcal{V})$ satisfies $(S_2)^\ast$. Let $N \in \N$ be arbitrary and choose $\widetilde{N} \geq N$ as in \eqref{locally-bounded-decreasing}. Next, choose $M \geq \widetilde{N}$ and $n \in \N$ according to $(wQ)$. Pick $\widetilde{M} \geq M$ as in \eqref{locally-bounded-decreasing}. We shall show $(S_2)^\ast$ for $\widetilde{M}$ and $n$. Let $K  \geq \widetilde{M}$ and $m \in \N$ be arbitrary. Choose $\widetilde{K} \geq K$ as in \eqref{locally-bounded-decreasing}. By $(wQ)$, there are $k \in \N$ and $C > 0$ such that
$$
\|\varphi\|_{v_M,m} \leq C\Big(\|\varphi\|_{v_{\widetilde{N}},n} + \|\varphi\|_{{v_{\widetilde{K}},k}}\Big), \qquad \varphi \in  \dot{\mathcal{B}}_{v_N}(\R^d).
$$
Let $x \in \R^d$ and $\varphi \in \mathcal{D}_{[-1,1]^d}$ be arbitrary. For all $j,J \in \N$ we have that $T_x\varphi \in \mathcal{D}(\R^d) \subset \dot{\mathcal{B}}_{v_J}(\R^d)$ and 
$$
\|T_x\varphi\|_{v_{\widetilde{J}},j} \leq \|g_{J,\widetilde{J}}\|_{[-1,1]^d} v_J(x) \|\varphi\|_{[-1,1]^d,j}
$$
and
$$
v_{\widetilde{J}}(x) \|\varphi\|_{[-1,1]^d,j}\leq \|\check{g}_{J,\widetilde{J}}\|_{[-1,1]^d} \|T_x\varphi\|_{v_J,j}. 
$$
Hence, 
\begin{align*}
v_{\widetilde{M}}(x) \|\varphi\|_{[-1,1]^d,m} &\leq \|\check{g}_{M,\widetilde{M}}\|_{[-1,1]^d}\|T_x\varphi\|_{v_M,m}  \\
&\leq C\|\check{g}_{M,\widetilde{M}}\|_{[-1,1]^d}(\|T_x\varphi\|_{v_{\widetilde{N}},n} + \|T_x\varphi\|_{{v_{\widetilde{K}},k}})\\
&\leq C'(v_N(x)\|\varphi\|_{[-1,1]^d,n}  + v_K(x)\|\varphi\|_{[-1,1]^d,k}),
\end{align*}
where $C' = C\|\check{g}_{M,\widetilde{M}}\|_{[-1,1]^d}\max\{\|g_{N,\widetilde{N}}\|_{[-1,1]^d},\|g_{K,\widetilde{K}}\|_{[-1,1]^d}\}$.
\end{proof}

\subsection{Characterization via the STFT} We now turn our attention to the mapping properties of the STFT on the spaces $\mathcal{B}_{\mathcal{V}}(\R^d)$ and $\dot{\mathcal{B}}_{\mathcal{V}}(\R^d)$. The following two technical lemmas are needed.

\begin{lemma}\label{STFT-test-smooth}
Let $\psi \in \mathcal{D}(\R^d)$ and let $w$ and $v$ be positive continuous functions on $\R^d$ such that
\begin{equation}
g = \sup_{x \in \R^d} \frac{v(x + \:\cdot\:)}{w(x)} \in L^\infty_{\operatorname{loc}}(\R^d).
\label{locally-bounded-cond}
\end{equation}
Then, the mappings
$$
V_\psi: \mathcal{B}^n_{w}(\R^d) \rightarrow C(v\otimes (1 + |\:\cdot\:|)^n)(\R^{2d}_{x,\xi})
$$ 
and
$$
V_\psi: {\dot{\mathcal{B}}}^n_{w}(\R^d) \rightarrow C(v\otimes (1 + |\:\cdot\:|)^n)_0(\R^{2d}_{x,\xi})
$$
are well-defined and continuous.
\end{lemma}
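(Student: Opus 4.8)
The plan is to derive the first mapping directly from a pointwise estimate obtained by integration by parts, and then to deduce the second mapping by a density argument rather than by a separate limiting computation.

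First I would record that, for $f\in\mathcal{B}^n_w(\R^d)$, the defining integral $V_\psi f(x,\xi)=\int_{\R^d} f(t)\overline{\psi(t-x)}e^{-2\pi i\xi t}\dt$ runs over the compact set $x+\operatorname{supp}\psi$, so $V_\psi f$ is a well-defined smooth, hence continuous, function on $\R^{2d}$. To produce the weight $(1+|\xi|)^n$ I would integrate by parts in $t$: for $|\alpha|\le n$ one has, all boundary terms vanishing since $\psi$ has compact support,
$$
(2\pi i\xi)^{\alpha}V_\psi f(x,\xi)=\sum_{\beta\le\alpha}\binom{\alpha}{\beta}\int_{\operatorname{supp}\psi}\partial^{\beta}f(x+s)\,\overline{\partial^{\alpha-\beta}\psi(s)}\,e^{-2\pi i\xi(x+s)}\dss
$$
Then I would estimate $|\partial^{\beta}f(x+s)|\le\|f\|_{w,n}/w(x+s)$ and, crucially, $v(x)/w(x+s)\le\check g(s)$ — which is exactly what \eqref{locally-bounded-cond} gives once $x$ is written as $(x+s)-s$ — and use that $\check g$ is integrable over $\operatorname{supp}\psi$ because $g\in L^\infty_{\operatorname{loc}}(\R^d)$. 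Summing over $|\alpha|\le n$ and bounding $(1+|\xi|)^n$ by a constant multiple of $\sum_{|\alpha|\le n}|\xi^{\alpha}|$ then yields $|V_\psi f(x,\xi)|\,v(x)\,(1+|\xi|)^n\le C\,\|f\|_{w,n}$ for all $(x,\xi)\in\R^{2d}$, which is precisely the continuity of $V_\psi\colon\mathcal{B}^n_w(\R^d)\to Cv\otimes(1+|\cdot|)^n(\R^{2d}_{x,\xi})$.

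For the second mapping I would argue by density. For $\varphi\in\mathcal{D}(\R^d)$ one checks that $V_\psi\varphi$ is supported in $(\operatorname{supp}\varphi-\operatorname{supp}\psi)\times\R^d$ and that, for each fixed $x$, the function $\xi\mapsto V_\psi\varphi(x,\xi)$ is the Fourier transform of the compactly supported smooth function $\varphi\,T_x\overline{\psi}$, hence rapidly decreasing; consequently $V_\psi\varphi\in C(v\otimes(1+|\cdot|)^n)_0(\R^{2d}_{x,\xi})$. Since $C(v\otimes(1+|\cdot|)^n)_0(\R^{2d}_{x,\xi})$ is, by definition, a closed subspace of $Cv\otimes(1+|\cdot|)^n(\R^{2d}_{x,\xi})$, since $\dot{\mathcal{B}}^n_w(\R^d)$ is the closure of $\mathcal{D}(\R^d)$ in $\mathcal{B}^n_w(\R^d)$, and since $V_\psi$ is continuous on $\mathcal{B}^n_w(\R^d)$, it follows that $V_\psi$ maps $\dot{\mathcal{B}}^n_w(\R^d)$ continuously into $C(v\otimes(1+|\cdot|)^n)_0(\R^{2d}_{x,\xi})$, which is the remaining assertion.

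The only genuinely delicate point is the weight bookkeeping in the first step: the shift $s\in\operatorname{supp}\psi$ introduced by the window must be fed into \eqref{locally-bounded-cond} with the correct orientation, so that it is the reflection $\check g$ — not $g$ itself — that dominates $v(x)/w(x+s)$, and one must invoke $g\in L^\infty_{\operatorname{loc}}(\R^d)$ to control the resulting integral over $\operatorname{supp}\psi$. Beyond that the argument is a routine integration by parts together with a closed-subspace observation, and in particular the statement about the dotted spaces requires no separate convergence estimate once the case of $\mathcal{D}(\R^d)$ has been settled.
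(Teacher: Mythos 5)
Your proof is correct. The first half (the pointwise estimate via integration by parts, the Leibniz expansion, and the observation that $v(x)/w(x+s)=v((x+s)-s)/w(x+s)\leq \check g(s)$ with $\check g$ bounded on $\operatorname{supp}\psi$) is essentially identical to the paper's argument for the unndotted spaces. For the dotted spaces, however, you take a genuinely different and in fact shorter route. The paper verifies the vanishing condition directly for an arbitrary $\varphi\in\dot{\mathcal{B}}^n_w(\R^d)$: the decay as $|x|\to\infty$ is read off from the same inequality, but the decay as $|\xi|\to\infty$ uniformly for $x$ in a compact set requires an extra argument, namely that $e^{-2\pi i t\xi}\to 0$ weak-$\ast$ in $L^\infty$ and hence uniformly on the compact subset $\{fT_x\chi : x\in L\}$ of $L^1(\R^d)$ (a Riemann--Lebesgue-on-compacta argument). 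You replace all of this by: $V_\psi(\mathcal{D}(\R^d))\subseteq C(v\otimes(1+|\cdot|)^n)_0(\R^{2d})$, the target is a closed subspace of $Cv\otimes(1+|\cdot|)^n(\R^{2d})$, $\dot{\mathcal{B}}^n_w(\R^d)$ is by definition the closure of $\mathcal{D}(\R^d)$ in $\mathcal{B}^n_w(\R^d)$, and $V_\psi$ is already known to be continuous on the latter. This is legitimate and arguably cleaner, since it exploits the definition of the dotted space directly and avoids the functional-analytic detour; what the paper's direct computation buys is independence from the density of $\mathcal{D}(\R^d)$, i.e.\ it would survive if one took the ``vanishing at infinity'' description as the definition. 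One small point you should make explicit: to conclude $V_\psi\varphi\in C(v\otimes(1+|\cdot|)^n)_0$ for $\varphi\in\mathcal{D}(\R^d)$ you need the rapid decay in $\xi$ to be \emph{uniform} over the compact $x$-support of $V_\psi\varphi$, not merely for each fixed $x$; this is immediate from the bound $|\xi^\alpha V_\psi\varphi(x,\xi)|\leq C\|\partial^\alpha(\varphi\,T_x\overline{\psi})\|_{L^1}$, which is uniform for $x$ in a compact set, but as written your ``consequently'' glosses over it.
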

\begin{proof}
Set $K = \operatorname{supp} \psi$ and let $\varphi \in \mathcal{B}^n_{w}(\R^d)$ be arbitrary. For all $(x,\xi) \in \R^{2d}$ it holds that
\begin{align*}
&|V_\psi\varphi(x,\xi)| v(x) (1 + |\xi|)^n \leq (1+ \sqrt{d})^n \max_{|\alpha| \leq n}|\xi^\alpha V_\psi\varphi(x,\xi)|v(x) \\
&\leq (1+ \sqrt{d})^n \max_{|\alpha| \leq n}(2\pi)^{-|\alpha|} \sum_{\beta \leq \alpha} \binom{\alpha}{\beta} \int_{K} |\partial^{\beta}\varphi(t+x)|w(t+x)\check{g}(t)|\partial^{\alpha-\beta}\psi(t)| \dt \\
&\leq (1+ \sqrt{d})^n|K| \|\psi\|_{K,n} \|\check{g}\|_{K} \max_{|\alpha| \leq n}\sup_{t \in K}|\partial^{\alpha}\varphi(t+x)|w(t+x),
\end{align*}
which shows the continuity of $V_\psi$ on $\mathcal{B}^n_{w}(\R^d)$. 
We still need to show that $V_\psi({\dot{\mathcal{B}}}^n_{w}(\R^d)) \subseteq C(v\otimes (1 + |\:\cdot\:|)^n)_0(\R^{2d})$. Let $\varphi \in {\dot{\mathcal{B}}}^n_{w}(\R^d)$ be arbitrary. The above inequality shows that
$$
\lim_{|x| \to \infty} \sup_{\xi \in \R^d} |V_\psi\varphi(x,\xi)| v(x) (1 + |\xi|)^n = 0.
$$
Hence, we only need to prove that
$$
\lim_{|\xi| \to \infty} \sup_{x \in L} |V_\psi\varphi(x,\xi)| v(x) (1 + |\xi|)^n = 0
$$
for all compacts $L \Subset \R^d$. Since
\begin{align*}
&\sup_{x \in L} |V_\psi\varphi(x,\xi)| v(x) (1 + |\xi|)^n \\
&\leq  (1+ \sqrt{d})^n\|v\|_{L}  \sup_{x \in L}\max_{|\alpha| \leq n}(2\pi)^{-|\alpha|} \sum_{\beta \leq \alpha}\binom{\alpha}{\beta} |\mathcal{F}(\partial^{\beta}\varphi T_x \partial^{\alpha-\beta}\overline{\psi})(\xi)|,
\end{align*}
it suffices to show that
$$
\lim_{|\xi| \to \infty} \sup_{x \in L} |\mathcal{F}(f T_x \chi)(\xi)| = \lim_{|\xi| \to \infty} \sup_{x \in L}|\langle e^{-2\pi i t \xi}, f(t) T_x \chi(t) \rangle_{(L^\infty,L^1)}| = 0
$$
for all $f \in C(\R^d)$ and $\chi \in \mathcal{D}(\R^d)$.  Since the set $\{ e^{-2\pi i t  \xi} \, : \, \xi \in \R^d\}$ is bounded in $L^\infty(\R^d_t)$ and  $\lim_{|\xi| \rightarrow \infty} e^{-2\pi i t \xi} = 0$ in $L^\infty(\R^d_t)$ endowed with the weak-$\ast$ topology (Riemann-Lebesgue lemma), we obtain that $\lim_{|\xi| \rightarrow \infty} e^{-2\pi i t \xi} = 0$ on compact subsets of $L^1(\R^d)$. The result  follows by observing that the set $\{ f T_x \chi \, : \, x \in L\}$ is compact in $L^1(\R^d)$.\end{proof}
\begin{lemma}\label{double-int-test-smooth}
Let $\psi \in \mathcal{D}(\R^d)$ and let $w$ and $v$ be positive continuous functions on $\R^d$ for which \eqref{locally-bounded-cond} holds. Then, the mappings
$$
V^\ast_\psi: C(w\otimes (1+|\:\cdot\:|)^{n+d+1})(\R^{2d}_{x,\xi}) \rightarrow \mathcal{B}^{n}_v(\R^d)
$$
and
$$
V^\ast_\psi: C(w\otimes (1+|\:\cdot\:|)^{n+d+1})_0(\R^{2d}_{x,\xi}) \rightarrow {\dot{\mathcal{B}}}^{n}_v(\R^d)
$$
are well-defined and continuous.
\end{lemma}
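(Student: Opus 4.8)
The plan is to work directly from the weak-integral definition
\[
V^\ast_\psi F(t) = \int\int_{\R^{2d}} F(x,\xi)\, M_\xi T_x\psi(t)\, \dx\dxi = \int\int_{\R^{2d}} F(x,\xi)\, e^{2\pi i \xi t}\psi(t-x)\, \dx\dxi ,
\]
and to estimate the weighted derivatives of $V^\ast_\psi F$. First I would observe that, for fixed $t$, the factor $\psi(t-x)$ restricts the $x$-integration to the compact set $t - K$, where $K := \operatorname{supp}\psi$, while $|F(x,\xi)| \le \|F\|_{w\otimes(1+|\cdot|)^{n+d+1}}\,(w(x)(1+|\xi|)^{n+d+1})^{-1}$; since $1/w$ is locally bounded (as $w$ is positive continuous) and $(1+|\xi|)^{-(n+d+1)}$ is integrable on $\R^d$, the integral converges absolutely. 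Differentiating under the integral sign --- which I would justify by checking that each derivative $\partial^\alpha_t$ with $|\alpha| \le n$ of the integrand is dominated, uniformly for $t$ in compacts, by a fixed integrable function of $(x,\xi)$ --- yields $V^\ast_\psi F \in C^n(\R^d)$.

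The core estimate rests on the Leibniz expansion
\[
\partial^\alpha_t\big(e^{2\pi i \xi t}\psi(t-x)\big) = \sum_{\beta \le \alpha}\binom{\alpha}{\beta}(2\pi i \xi)^\beta e^{2\pi i \xi t}(\partial^{\alpha-\beta}\psi)(t-x),
\]
the trivial bound $|\xi^\beta| \le (1+|\xi|)^n$ for $|\beta| \le n$, and --- this is the one place where hypothesis \eqref{locally-bounded-cond} is used --- the inequality $v(t)/w(t-y) = v((t-y)+y)/w(t-y) \le g(y) \le \|g\|_{K}$, valid for $y \in K$. Performing the substitution $x = t - y$ in the $x$-integral, these combine to give, for all $|\alpha| \le n$ and all $t \in \R^d$,
\[
|\partial^\alpha_t V^\ast_\psi F(t)|\, v(t) \le C \,\|g\|_{K} \,\|\psi\|_{K,n}\,|K|\Big(\int_{\R^d}\frac{\dxi}{(1+|\xi|)^{d+1}}\Big)\, \|F\|_{w\otimes(1+|\cdot|)^{n+d+1}} ,
\]
with $C$ depending only on $n$ and $d$. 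This simultaneously shows $V^\ast_\psi F \in \mathcal{B}^n_v(\R^d)$ and, by linearity in $F$, the continuity of the first mapping.

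For the second mapping I would argue by density. Compactly supported functions are dense in $C(w\otimes(1+|\cdot|)^{n+d+1})_0(\R^{2d})$ (multiply by cut-off functions $\chi_R(x,\xi)$ equal to $1$ on the ball of radius $R$ and supported in the ball of radius $2R$), and whenever $F$ has compact support in $(x,\xi)$ the $x$-integration takes place over a compact set, so that $V^\ast_\psi F$ is a compactly supported function; differentiating under the integral as above shows it is $C^\infty$, hence $V^\ast_\psi F \in \mathcal{D}(\R^d) \subseteq \dot{\mathcal{B}}^n_v(\R^d)$. Since $V^\ast_\psi$ is continuous into $\mathcal{B}^n_v(\R^d)$ by the first part and $\dot{\mathcal{B}}^n_v(\R^d)$ is by definition the closure of $\mathcal{D}(\R^d)$ in $\mathcal{B}^n_v(\R^d)$, it follows that $V^\ast_\psi$ maps all of $C(w\otimes(1+|\cdot|)^{n+d+1})_0(\R^{2d})$ continuously into $\dot{\mathcal{B}}^n_v(\R^d)$.

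There is no serious obstacle here: the argument is essentially a careful unpacking of the definition of $V^\ast_\psi$ on functions. The only points requiring a little care are the justification of differentiation under the integral sign (needed both to land in $C^n$ and to see that $V^\ast_\psi F$ is smooth when $F$ is compactly supported) and the correct bookkeeping of which compact set --- $K = \operatorname{supp}\psi$ versus $t - K$ --- is the relevant one when invoking \eqref{locally-bounded-cond}.
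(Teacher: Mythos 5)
Your proof is correct, and the heart of it --- the Leibniz expansion of $\partial^\alpha_t\bigl(e^{2\pi i \xi t}\psi(t-x)\bigr)$, the bound $|\xi^\beta|\le(1+|\xi|)^n$, and the use of \eqref{locally-bounded-cond} in the form $v(t)\le g(t-x)w(x)\le\|g\|_K w(x)$ for $t-x\in K=\operatorname{supp}\psi$, leaving $(1+|\xi|)^{-(d+1)}$ to absorb the $\xi$-integral --- is exactly the estimate the paper performs for the first mapping. The only divergence is in the second mapping: the paper keeps its inequality in the form
$$
|\partial^\alpha V^\ast_\psi F(t)|\,v(t)\le C\int_{\R^d}\sup_{x\in K}\frac{|F(t-x,\xi)|\,w(t-x)\,(1+|\xi|)^{n+d+1}}{(1+|\xi|)^{d+1}}\,\dxi
$$
and applies dominated convergence as $|t|\to\infty$ to read off directly that the weighted derivatives vanish at infinity, whereas you approximate $F$ by compactly supported functions (dense in $C(w\otimes(1+|\cdot|)^{n+d+1})_0$), observe that $V^\ast_\psi$ sends these into $\mathcal{D}(\R^d)$, and conclude via continuity plus the fact that $\dot{\mathcal{B}}^n_v(\R^d)$ is by definition closed in $\mathcal{B}^n_v(\R^d)$. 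Both mechanisms are sound; the paper's is marginally shorter since it reuses the displayed inequality verbatim, while yours has the small advantage of not needing the pointwise characterization of $\dot{\mathcal{B}}^n_v(\R^d)$ at all, only its definition as a closure.
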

\begin{proof}
Set $K = \operatorname{supp} \psi$ and let $F \in C(w\otimes (1+|\: \cdot \:|)^{n+d+1})(\R^{2d})$ be arbitrary.  For all $t \in \R^d$ and $|\alpha| \leq n$ it holds that
\begin{align*}
|\partial^\alpha V^\ast_\psi F(t)|v(t) &\leq  \sum_{\beta \leq \alpha} \binom{\alpha}{\beta} v(t)\int \int_{\R^{2d}} |F(x,\xi)|(2\pi|\xi|)^{|\beta|}|\partial^{\alpha-\beta}\psi(t-x)| \dx \dxi \\
&\leq (2\pi)^n|K|\|\psi\|_{K,n}\|g\|_{K} \int_{\R^d}\sup_{x \in K} \frac{|F(t-x,\xi)|w(t-x)(1+|\xi|)^{n+d+1}}{(1+|\xi|)^{d+1}} \dxi,
\end{align*}
which shows the continuity of $V^\ast_\psi$ on  $C(w\otimes (1+|\: \cdot \:|)^{n+d+1})(\R^{2d})$. The fact that $V^\ast_\psi(C(w\otimes (1+|\: \cdot \:|)^{n+d+1})_0(\R^{2d})) \subseteq {\dot{\mathcal{B}}}^{n}_v(\R^d)$ follows from the above inequality and Lebesgue's dominated convergence theorem.
\end{proof}
Given a decreasing weight system $\mathcal{V} = (v_N)_N$, we define the following  $(LF)$-spaces of continuous functions
$$
\mathcal{V}_{\operatorname{pol}}C(\R^{2d}) := \varinjlim_{N \in \N} \varprojlim_{n \in \N}  C(v_N\otimes (1+|\: \cdot \:|)^{n})(\R^{2d})
$$
and
$$
\mathcal{V}_{\operatorname{pol},0}C(\R^{2d}) := \varinjlim_{N \in \N} \varprojlim_{n \in \N}  C(v_N\otimes (1+|\: \cdot \:|)^{n})_0(\R^{2d}).
$$
Lemmas \ref{STFT-test-smooth} and \ref{double-int-test-smooth} together with \eqref{reconstruction-D-dual} directly imply the ensuing two results.
\begin{proposition}\label{STFT-test-char-smooth}
Let $\psi \in \mathcal{D}(\R^d)$ and let $\mathcal{V} = (v_N)_N$ be a decreasing weight system satisfying \eqref{locally-bounded-decreasing}. Then, the mappings
$$
V_\psi: \mathcal{B}_{\mathcal{V}}(\R^d) \rightarrow \mathcal{V}_{\operatorname{pol}}C(\R^{2d}_{x,\xi}) 
$$ 
and 
$$
V^\ast_\psi: \mathcal{V}_{\operatorname{pol}}C(\R^{2d}_{x,\xi}) \rightarrow \mathcal{B}_{\mathcal{V}}(\R^d)
$$
are well-defined and continuous. Moreover, if $\psi \neq 0$ and $\gamma \in \mathcal{D}(\R^d)$ is a synthesis window for $\psi$, then
$$
\frac{1}{(\gamma, \psi)_{L^2}} V^\ast_\gamma \circ V_\psi = \operatorname{id}_{\mathcal{B}_{\mathcal{V}}(\R^d)}.
$$
\end{proposition}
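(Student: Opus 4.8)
The plan is to reduce the two mapping statements to Lemmas \ref{STFT-test-smooth} and \ref{double-int-test-smooth} by unwinding the inductive and projective limits that define the spaces involved, and then to deduce the reconstruction formula from its distributional counterpart in Proposition \ref{STFT-D-dual}.

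First I would treat $V_\psi$. Fix $N \in \N$ and choose $\widetilde{N} \geq N$ according to \eqref{locally-bounded-decreasing}, so that $g_{N,\widetilde{N}} = \sup_{x} v_{\widetilde{N}}(x + \cdot)/v_N(x) \in L^\infty_{\operatorname{loc}}(\R^d)$; this is precisely condition \eqref{locally-bounded-cond} with $w = v_N$ and $v = v_{\widetilde{N}}$. By Lemma \ref{STFT-test-smooth}, $V_\psi : \mathcal{B}^n_{v_N}(\R^d) \to Cv_{\widetilde{N}}\otimes(1+|\cdot|)^n(\R^{2d})$ is continuous for every $n \in \N$. Composing with the canonical projections $\mathcal{B}_{v_N}(\R^d) = \varprojlim_n \mathcal{B}^n_{v_N}(\R^d) \to \mathcal{B}^n_{v_N}(\R^d)$ yields a continuous map $V_\psi : \mathcal{B}_{v_N}(\R^d) \to \varprojlim_n Cv_{\widetilde{N}}\otimes(1+|\cdot|)^n(\R^{2d})$, and the latter Fréchet space is the step of index $\widetilde{N}$ of the $(LF)$-space $\mathcal{V}_{\operatorname{pol}}C(\R^{2d})$. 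Hence $V_\psi : \mathcal{B}_{v_N}(\R^d) \to \mathcal{V}_{\operatorname{pol}}C(\R^{2d})$ is continuous for every $N$, and by the universal property of the inductive limit $\mathcal{B}_{\mathcal{V}}(\R^d) = \varinjlim_N \mathcal{B}_{v_N}(\R^d)$ the map $V_\psi : \mathcal{B}_{\mathcal{V}}(\R^d) \to \mathcal{V}_{\operatorname{pol}}C(\R^{2d})$ is continuous. For $V^\ast_\psi$ I would argue symmetrically: with $N$ and $\widetilde{N}$ as above, Lemma \ref{double-int-test-smooth} (again with $w = v_N$, $v = v_{\widetilde{N}}$) gives continuous maps $V^\ast_\psi : Cv_N\otimes(1+|\cdot|)^{n+d+1}(\R^{2d}) \to \mathcal{B}^n_{v_{\widetilde{N}}}(\R^d)$ for every $n$; precomposing with the projections from $\varprojlim_m Cv_N\otimes(1+|\cdot|)^m(\R^{2d})$ and then taking the projective limit over $n$ produces a continuous map $V^\ast_\psi : \varprojlim_m Cv_N\otimes(1+|\cdot|)^m(\R^{2d}) \to \mathcal{B}_{v_{\widetilde{N}}}(\R^d) \hookrightarrow \mathcal{B}_{\mathcal{V}}(\R^d)$, and the universal property of $\mathcal{V}_{\operatorname{pol}}C(\R^{2d}) = \varinjlim_N \varprojlim_m Cv_N\otimes(1+|\cdot|)^m(\R^{2d})$ finishes this part.

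It remains to establish the reconstruction formula. Here I would note that the classical operators $V_\psi$ and $V^\ast_\gamma$ used above are restrictions of the distributional operators of Section \ref{STFT-distributions}: for $\varphi \in \mathcal{B}_{\mathcal{V}}(\R^d) \subset \mathcal{D}'(\R^d)$ the classical transform $V_\psi\varphi$ is the function $(x,\xi) \mapsto \langle \varphi, \overline{M_\xi T_x\psi}\rangle$ of Lemma \ref{well-defined-STFT}, and for $F \in \mathcal{V}_{\operatorname{pol}}C(\R^{2d})$ a short Fubini argument (using that $\gamma$ has compact support and that $F$ is controlled by weights only polynomial in $\xi$) identifies the absolutely convergent integral defining $V^\ast_\gamma F$ with the distributional $V^\ast_\gamma F$. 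Granting this consistency, the distributional reconstruction formula \eqref{reconstruction-D-dual} gives $V^\ast_\gamma(V_\psi\varphi) = (\gamma,\psi)_{L^2}\varphi$ in $\mathcal{D}'(\R^d)$ for every $\varphi \in \mathcal{B}_{\mathcal{V}}(\R^d)$; since by the first part both sides lie in $\mathcal{B}_{\mathcal{V}}(\R^d)$, which embeds injectively into $\mathcal{D}'(\R^d)$, the identity $\frac{1}{(\gamma,\psi)_{L^2}} V^\ast_\gamma \circ V_\psi = \operatorname{id}_{\mathcal{B}_{\mathcal{V}}(\R^d)}$ follows. Alternatively, one may prove the formula directly and pointwise: for fixed $t$ the integrand $V_\psi\varphi(x,\xi)M_\xi T_x\gamma(t)$ is absolutely integrable, so Fubini together with Fourier inversion applied to $\varphi\, T_x\overline{\psi} \in \mathcal{D}(\R^d)$ reduces $V^\ast_\gamma(V_\psi\varphi)(t)$ to $\varphi(t)\int_{\R^d}\gamma(u)\overline{\psi(u)}\,du$.

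The computations involved are routine; the only point that needs a little care is the consistency between the classical and distributional versions of $V^\ast_\gamma$ (equivalently, the justification of Fubini in the direct computation), which I expect to be the main, though minor, obstacle. One must also keep careful track of the two successive applications of \eqref{locally-bounded-decreasing} so that source and target weight indices match at every step.
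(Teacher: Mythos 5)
Your proposal is correct and follows exactly the route the paper intends: the paper's ``proof'' is the one-line remark that Lemmas \ref{STFT-test-smooth} and \ref{double-int-test-smooth} together with \eqref{desing-D-dual} directly imply the result, and your argument is precisely the careful unwinding of that remark (applying the two lemmas with $w=v_N$, $v=v_{\widetilde N}$ at each step of the limits, then transferring \eqref{reconstruction-D-dual} from $\mathcal{D}'(\R^d)$ via the consistency of the classical and distributional transforms). The index bookkeeping and the Fubini justification you flag are handled correctly.
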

\begin{proposition}\label{STFT-test-char-smooth-1}
Let $\psi \in \mathcal{D}(\R^d)$ and let $\mathcal{V} = (v_N)_N$ be a decreasing weight system satisfying \eqref{locally-bounded-decreasing}. Then, the mappings
$$
V_\psi: \dot{\mathcal{B}}_{\mathcal{V}}(\R^d) \rightarrow \mathcal{V}_{\operatorname{pol},0}C(\R^{2d}_{x,\xi})
$$ 
and 
$$
V^\ast_\psi: \mathcal{V}_{\operatorname{pol},0}C(\R^{2d}_{x,\xi}) \rightarrow \dot{\mathcal{B}}_{\mathcal{V}}(\R^d)
$$
are well-defined and continuous. 
\end{proposition}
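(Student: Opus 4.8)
The plan is to deduce both assertions from Lemmas~\ref{STFT-test-smooth} and~\ref{double-int-test-smooth} by feeding them the weights prescribed by \eqref{locally-bounded-decreasing} and then invoking the universal properties of projective and inductive limits, exactly along the lines of the (stated) proof of Proposition~\ref{STFT-test-char-smooth}. The relevant structural facts are: a linear map from an $(LF)$-space $\varinjlim_N E_N$ into a locally convex space is continuous as soon as each of its restrictions to the $E_N$ is continuous; we have $\dot{\mathcal{B}}_{\mathcal{V}}(\R^d) = \varinjlim_N \dot{\mathcal{B}}_{v_N}(\R^d)$ with $\dot{\mathcal{B}}_{v_N}(\R^d) = \varprojlim_n {\dot{\mathcal{B}}}^n_{v_N}(\R^d)$; and $\mathcal{V}_{\operatorname{pol},0}C(\R^{2d})$ is by definition the inductive limit of the Fr\'echet steps $\varprojlim_n C(v_N\otimes(1+|\cdot|)^n)_0(\R^{2d})$. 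Thus everything reduces to statements about the individual Banach building blocks.

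For $V_\psi$ I would fix $N \in \N$ and pick $\widetilde N \geq N$ as in \eqref{locally-bounded-decreasing}, so that \eqref{locally-bounded-cond} holds with $w = v_N$ and $v = v_{\widetilde N}$. Lemma~\ref{STFT-test-smooth} then yields, for every $n \in \N$, a continuous map $V_\psi : {\dot{\mathcal{B}}}^n_{v_N}(\R^d) \to C(v_{\widetilde N}\otimes(1+|\cdot|)^n)_0(\R^{2d})$. Precomposing with the continuous inclusion $\dot{\mathcal{B}}_{v_N}(\R^d)\hookrightarrow {\dot{\mathcal{B}}}^n_{v_N}(\R^d)$ and letting $n$ range over $\N$ shows that $V_\psi$ maps $\dot{\mathcal{B}}_{v_N}(\R^d)$ continuously into $\varprojlim_n C(v_{\widetilde N}\otimes(1+|\cdot|)^n)_0(\R^{2d})$, which embeds continuously into $\mathcal{V}_{\operatorname{pol},0}C(\R^{2d})$. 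Since $N$ was arbitrary, this gives the first mapping.

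For $V^\ast_\psi$ I would again fix $N$ and choose $\widetilde N \geq N$ as in \eqref{locally-bounded-decreasing}, again arranging \eqref{locally-bounded-cond} with $w = v_N$ and $v = v_{\widetilde N}$. Lemma~\ref{double-int-test-smooth} then provides, for each $n$, a continuous map $V^\ast_\psi : C(v_N\otimes(1+|\cdot|)^{n+d+1})_0(\R^{2d}) \to {\dot{\mathcal{B}}}^n_{v_{\widetilde N}}(\R^d)$. Since the Fr\'echet step $\varprojlim_m C(v_N\otimes(1+|\cdot|)^m)_0(\R^{2d})$ includes continuously into $C(v_N\otimes(1+|\cdot|)^{n+d+1})_0(\R^{2d})$ for each $n$, passing to the projective limit over $n$ produces a continuous map into $\varprojlim_n {\dot{\mathcal{B}}}^n_{v_{\widetilde N}}(\R^d) = \dot{\mathcal{B}}_{v_{\widetilde N}}(\R^d) \hookrightarrow \dot{\mathcal{B}}_{\mathcal{V}}(\R^d)$. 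Taking the inductive limit over $N$ completes the argument.

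I do not anticipate a real obstacle: this is essentially bookkeeping. The only mild subtlety is the index shift $N \mapsto \widetilde N$ forced by \eqref{locally-bounded-decreasing}, which is harmless because $(\widetilde N)_N$ is cofinal in $\N$, so none of the $\varinjlim$'s over $N$ is affected. Should one also want the reconstruction identity $\frac{1}{(\gamma,\psi)_{L^2}}\,V^\ast_\gamma\circ V_\psi = \operatorname{id}$ on $\dot{\mathcal{B}}_{\mathcal{V}}(\R^d)$, it follows at once from \eqref{reconstruction-D-dual} (equivalently, from the $L^2$ reconstruction formula applied to the smooth functions involved), exactly as in Proposition~\ref{STFT-test-char-smooth}.
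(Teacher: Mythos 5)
Your proof is correct and follows exactly the route the paper intends: the paper's own "proof" is the one-line remark that Lemmas \ref{STFT-test-smooth} and \ref{double-int-test-smooth} directly imply the result, and your argument just fills in the routine bookkeeping of applying those lemmas with $w=v_N$, $v=v_{\widetilde N}$ from \eqref{locally-bounded-decreasing} and passing through the projective and inductive limits. The index shift $N\mapsto\widetilde N$ and the exponent shift $n\mapsto n+d+1$ are handled correctly, so nothing is missing.
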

Finally, we combine Proposition \ref{STFT-test-char-smooth} with a result concerning the projective description of weighted $(LF)$-spaces of continuous functions from \cite{B-B} to give an explicit system of seminorms generating the topology of $\mathcal{B}_{\mathcal{V}}(\R^d)$ in case $\mathcal{V}$ satisfies $(\Omega)$; this will be of vital importance later on.

 A double sequence $\mathcal{U} = (u_{N,n})_{(N,n) \in \N^2}$ of positive continuous functions on $\R^d$ is called a \emph{weight system} if $u_{N,n}(x) \geq u_{N+1,n}(x)$ and $u_{N,n}(x) \leq u_{N,n+1}(x)$ for all $N,n \in \N$ and $x \in \R^d$. We define the following $(LF)$-space
$$
\mathcal{U}C(\R^d) := \varinjlim_{N \in \N} \varprojlim_{n \in \N} Cu_{N,n}(\R^d).
$$
The \emph{maximal Nachbin family associated with $\mathcal{U}$}, denoted by $ \overline{U}=\overline{U}(\mathcal{U})$, is given by the space consisting of all non-negative upper semicontinuous functions $u$ on $\R^d$ for which there is a sequence of natural numbers $(n_N)_{N \in \N}$ such that $\sup_{x \in \R^d} u(x)/u_{N,n_N}(x) < \infty$ for all $N \in \N$.
We define $C\overline{U}(\R^d)$ as the space consisting of all $f \in C(\R^d)$ such that $\| f \|_u < \infty$ for all $u \in \overline{U}$ and endow it with the locally convex topology generated by the system of seminorms $\{ \| \: \cdot \: \|_u \, : \, u \in \overline{U}\}$. Clearly, $\mathcal{U}C(\R^d)$ is continuously included in $C\overline{U}(\R^d)$. The problem of  projective description in this context is to find conditions on $\mathcal{U}$ which ensure that $\mathcal{U}C(\R^d)$ and $C\overline{U}(\R^d)$ coincide algebraically and/or topologically. This problem was thoroughly studied by Bierstedt and Bonet in \cite{B-B}. We shall use the following result of these authors\footnote{In fact, this result holds for weight systems defined on a general locally compact Hausdorff space.}:

\begin{theorem}[{\cite[Cor.\ 5, p.\ 42]{B-B}}]
\label{proj-desc-cont-LF}

Let $\mathcal{U} = (u_{N,n})_{N,n}$ be a weight system satisfying condition $(Q)$, i.e.,
\begin{gather*}
\forall N \in \N \, \exists M \geq N \, \exists n \in \N \, \forall K \geq M \, \forall m \in \N \, \forall \varepsilon > 0 \, \exists k \in \N \, \exists C > 0\, \forall x \in \R^d: \\
u_{M,m}(x) \leq \varepsilon u_{N,n}(x) + Cu_{K,k}(x).
\end{gather*}
Then, $\mathcal{U}C(\R^d) = C\overline{U}(\R^d)$ topologically.
\end{theorem}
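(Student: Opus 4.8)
The plan is to prove that the inclusion $j:\mathcal{U}C(\R^d)\to C\overline{U}(\R^d)$ is a topological isomorphism. Write $F_N:=\varprojlim_{n\in\N}Cu_{N,n}(\R^d)$, so that $\mathcal{U}C(\R^d)=\varinjlim_{N}F_N$ as an $(LF)$-space, and note first that $j$ is a well-defined continuous injection: an element of $F_N$ is a continuous $f$ with $\|f\,u_{N,n}\|_{L^\infty}<\infty$ for all $n$, and for any $u\in\overline{U}$ there are $C_N>0$ and $n_N\in\N$ with $u\le C_N u_{N,n_N}$, so $\|f\|_u\le C_N\|f\,u_{N,n_N}\|_{L^\infty}$; thus $f\in C\overline{U}(\R^d)$ and every seminorm $\|\cdot\|_u$ restricts to a continuous seminorm on each Fréchet step, hence is continuous on $\mathcal{U}C(\R^d)$. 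Consequently it suffices to establish \emph{(a)} $j$ is onto (the \emph{algebraic} projective description) and \emph{(b)} $j$ is relatively open, i.e.\ every $0$-neighbourhood of $\mathcal{U}C(\R^d)$ contains $\{f:\|f\|_u\le1\}$ for some $u\in\overline{U}$; together these make $j$ a continuous open bijection, hence a topological isomorphism.

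The crux is (b), and both parts rely on one weight construction whose internal consistency is exactly what condition $(Q)$ supplies. Fix a compact exhaustion $K_1\subset K_2\subset\cdots$ of $\R^d$. A basic $0$-neighbourhood of the $(LF)$-space $\mathcal{U}C(\R^d)$ contains the closed absolutely convex hull $V$ of $\bigcup_{N}\{f\in F_N:\|f\,u_{N,n_N}\|_{L^\infty}\le\varepsilon_N\}$ for some $(n_N)_N$ and $(\varepsilon_N)_N$, which — using $u_{N,n}\le u_{N,n+1}$ — may be taken with $n_N$ increasing and $\varepsilon_N$ decreasing. I would then put $u(x):=\min\{\varepsilon_N u_{N,n_N}(x):N\ge1,\ x\notin K_{N-1}\}$ (with $K_0=\emptyset$) and replace $u$ by its upper semicontinuous regularisation; since $u\le\varepsilon_N u_{N,n_N}$ off $K_{N-1}$ and all weights are continuous and positive, $u\le C_N u_{N,n_N}$ on all of $\R^d$, so $u\in\overline{U}$. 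It remains to show that $\|f\|_u\le1$ forces $f\in C\cdot V$ for a constant $C$ independent of $f$. Decompose $f=\sum_j f\chi_j$ along a locally finite smooth partition of unity adapted to the annuli of the exhaustion: each $f\chi_j$ lies in a single step $F_{M(j)}$ with $M(j)$ comparable to $j$, and the bound $\|f\|_u\le1$ makes $\|f\chi_j\,u_{M(j),n_{M(j)}}\|_{L^\infty}$ small. The delicate point is that $\chi_j$ overlaps two consecutive annuli, so controlling $f\chi_j$ forces a comparison of $u_{M(j),m}$ with the neighbouring weights $u_{N,n}$ and $u_{K,k}$; here one invokes $(Q)$, whose estimate $u_{M,m}\le\varepsilon u_{N,n}+Cu_{K,k}$ with $\varepsilon$ \emph{as small as desired} is tailored exactly to this hand-off, so that the transition errors, summed over all $j$, assemble into an absolutely convex combination of the generators of $V$ with total mass $\le1$. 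This yields $f\in C\cdot V$, proving (b).

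For (a) I would argue by contraposition with the same device: if $f\in C(\R^d)$ belongs to no step $F_N$, choose for each $N$ an index $n_N$ with $\|f\,u_{N,n_N}\|_{L^\infty}=\infty$ and feed these into the $\min$-construction to obtain $u\in\overline{U}$; condition $(Q)$ is what prevents the infimum defining $u$ from being driven to $0$ faster than $f$ grows along a sequence escaping every $K_j$, whence $\|f\|_u=\infty$ and $f\notin C\overline{U}(\R^d)$. Combining (a) and (b) gives $\mathcal{U}C(\R^d)=C\overline{U}(\R^d)$ topologically.

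I expect the main obstacle to be the absorption estimate in (b): converting the infimum-type weight $u$ back into a membership statement in the $(LF)$-neighbourhood $V$ is a quantitative telescoping argument in which one repeatedly reshuffles the triple of indices $N,M,K$ with a gain of an arbitrarily small factor at each handoff, and this is precisely the strength of $(Q)$ over the weaker regularity conditions $(wQ)$, $\beta$-regularity, and their relatives. An alternative, possibly cleaner, route is to localise first: over each compact set of the exhaustion all weights $u_{N,n}$ are mutually equivalent, so $\mathcal{U}C(\R^d)$ can be modelled on a Köthe-type $(LF)$-sequence space built from the exhaustion, $(Q)$ is then Vogt's condition guaranteeing the projective description for such sequence spaces, and one transfers the identity back; the residual nuisance there is the bookkeeping of upper semicontinuity and of the local boundedness of the weights.
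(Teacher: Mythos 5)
The paper offers no proof of this statement to compare against: it is imported verbatim from Bierstedt and Bonet \cite[p.\ 42, Cor.\ 5]{B-B}, and the authors use it as a black box. Judged on its own, your proposal correctly identifies the architecture that the known proof has --- the continuous injection $j$, the split into (a) surjectivity and (b) relative openness, and the standard device of manufacturing a weight $u\in\overline{U}$ as a pointwise minimum of the $\varepsilon_N u_{N,n_N}$ off a compact exhaustion --- but it does not actually prove either (a) or (b), and these two steps are the entire content of the theorem.

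The gap in (b) is not a routine verification. From $\|f\|_u\le 1$ you only get, on the annulus $K_j\setminus K_{j-1}$, the bound $|f(x)|\le\max_{N\le j}\bigl(\varepsilon_N u_{N,n_N}(x)\bigr)^{-1}$, whereas what you must control to place $f\chi_j$ into the $j$-th generator of $V$ is $\|f\chi_j\,u_{j,n_j}\|_{L^\infty}$; this involves ratios $u_{j,n_j}/u_{N,n_N}$ with $j\ge N$ and $n_j\ge n_N$, and these two monotonicities pull in opposite directions (the weights decrease in the first index but increase in the second), so the ratios are unbounded in general. Making the telescoping work requires applying $(Q)$ \emph{iteratively to re-choose} the indices $(n_N)$ and scalars $(\varepsilon_N)$ that enter the definition of $u$ before the partition-of-unity decomposition is attempted; your $u$ is built directly from the data of the given neighbourhood, and the sentence asserting that the transition errors ``assemble into an absolutely convex combination with total mass $\le 1$'' is precisely the estimate that has to be proved, not a consequence of anything established earlier. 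The same applies to (a): the claim that $(Q)$ ``prevents the infimum defining $u$ from being driven to $0$ faster than $f$ grows'' is the whole algebraic projective description problem, and no argument is given. (The alternative route you mention, localizing to a K\"othe-type sequence space over the exhaustion and invoking Vogt-type results, is a genuine known strategy, but it is likewise only named, not executed.) As written, the text is a credible plan for Bierstedt--Bonet's proof rather than a proof.
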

We are now able to show the following result:
\begin{corollary}\label{proj-desc-smooth-functions}
Let $\mathcal{V} = (v_N)_N$ be a decreasing weight system satisfying $(\Omega)$. Then, the weight system 
$$
\mathcal{V} _{\operatorname{pol}} = (v_N\otimes (1+|\: \cdot \:|)^{n})_{N,n}
$$
(defined on $\R^{2d}$) satisfies $(Q)$. Consequently, if $\mathcal{V}$ satisfies \eqref{locally-bounded-decreasing} and $\psi \in \mathcal{D}(\R^d) \backslash \{0\}$ is fixed,  $f \in \mathcal{D}'(\R^d)$ belongs to $\mathcal{B}_{\mathcal{V}}(\R^d)$ if and only if  $\|V_\psi f \|_u < \infty$ for all $u \in  \overline{U}(\mathcal{V} _{\operatorname{pol}})$. Moreover, the topology of $\mathcal{B}_{\mathcal{V}}(\R^d)$ is generated by the system of seminorms $\{ \|V_\psi ( \: \cdot \: ) \|_u \, : \, u \in \overline{U}(\mathcal{V} _{\operatorname{pol}}) \}$.
\end{corollary}
\begin{proof}
The second part follows from the first one, Theorem \ref{proj-desc-cont-LF}, and Proposition \ref{STFT-test-char-smooth}. We now prove that  $\mathcal{V} _{\operatorname{pol}}$ satisfies $(Q)$ if $\mathcal{V}$ satisfies $(\Omega)$ by employing the same idea as in \cite[Thm.\ 5.1]{Vogt}. We shall show $(Q)$ with $n = 0$.  Let $N \in \N$ be arbitrary and choose $M \geq N$ according to $(\Omega)$. Next, let $K \geq M$ and $m \in \N$ be arbitrary. Condition $(\Omega)$ implies that there are $\theta \in (0,1)$ and $C > 0$ such that $v_M(x) \leq C {v_N(x)}^{1-\theta}{v_K(x)}^{\theta}$ for all $x \in \R^d$. Choose  $k \in \N$ so large that $k\theta \geq m$.
Let $\varepsilon > 0$ be arbitrary and set $C' = (C\varepsilon^{\theta-1})^{1/\theta}$. We claim that
$$
v_{M}(x)(1+|\xi|)^m \leq \varepsilon v_{N}(x)  + C'v_{K}(x)(1+|\xi|)^k, \qquad (x,\xi) \in \R^{2d}. 
$$
Let $(x,\xi) \in \R^{2d}$ be arbitrary. If $v_{M}(x)(1+|\xi|)^m \leq \varepsilon v_{N}(x)$, we are done. So, we may assume that $v_{M}(x)(1+|\xi|)^m > \varepsilon v_{N}(x)$. Hence,
\begin{align*}
v_{M}(x)(1+|\xi|)^m &\leq  C{v_N(x)}^{1-\theta}(v_K(x)(1+|\xi|)^k))^{\theta} \\
&\leq C\varepsilon^{\theta-1} (v_M(x)(1+|\xi|)^m)^{1-\theta}(v_K(x)(1+|\xi|)^k)^{\theta},
\end{align*}
and thus $v_{M}(x)(1+|\xi|)^m \leq C'v_{K}(x)(1+|\xi|)^k$.
\end{proof}

\section{On a class of weighted $L^1$ convolutor spaces}\label{L1-convolutors}
In this section, we introduce a class of weighted $L^1$ convolutor spaces $\mathcal{O}'_C(\mathcal{D}, L^1_{\mathcal{W}})$ defined via an increasing sequence of positive continuous functions $\mathcal{W}$. In the main theorem of this section we characterize the increasing weight systems $\mathcal{W}$ for which the space $\mathcal{O}'_C(\mathcal{D}, L^1_{\mathcal{W}})$ is ultrabornological. To achieve this goal, we first study various structural and topological properties of these spaces; most notably, we determine a topological predual and give an explicit description of the dual of $\mathcal{O}'_C(\mathcal{D}, L^1_{\mathcal{W}})$. Since the proofs of the latter results will heavily rely on the mapping properties of the STFT on $\mathcal{O}'_C(\mathcal{D}, L^1_{\mathcal{W}})$, we start with a discussion of the STFT on this space.

Let $w$ be a positive measurable function on $\R^d$. We define $L^1_w(\R^d)$ as the Banach space consisting of all measurable functions $f$ on $\R^d$ such that
$$
\|f\|_{L^1_w} := \int_{\R^d}|f(x)|w(x) \dx < \infty.
$$
A pointwise increasing  sequence $\mathcal{W} = (w_N)_{N \in \N}$  of positive continuous functions on $\R^d$ is called an \emph{increasing weight system}. Set
$$
L^1_\mathcal{W}(\R^d) := \varprojlim_{N \in \N} L^1_{w_N}(\R^d).
$$
We are interested in the following convolutor spaces
$$
\mathcal{O}'_C(\mathcal{D},L^1_{\mathcal{W}}) := \{ f \in \mathcal{D}'(\R^d) \, : \, f \ast \varphi \in L^1_\mathcal{W}(\R^d) \mbox{ for all } \varphi \in \mathcal{D}(\R^d) \}.
$$
For $f \in \mathcal{O}'_C(\mathcal{D},L^1_{\mathcal{W}})$ fixed, the mapping $\mathcal{D}(\R^d) \rightarrow L^1_{\mathcal{W}}(\R^d),$  $\varphi \mapsto f \ast \varphi$ is continuous, as follows from the continuity of the mapping $\mathcal{D}(\R^d) \rightarrow \mathcal{E}(\R^d),$ $\varphi \mapsto f \ast \varphi$ and the closed graph theorem. We endow 
$\mathcal{O}'_C(\mathcal{D},L^1_{\mathcal{W}})$ with the initial topology with respect to the mapping
$$
\mathcal{O}'_C(\mathcal{D},L^1_{\mathcal{W}}) \rightarrow L_b(\mathcal{D}(\R^d),L^1_{\mathcal{W}}(\R^d)), \quad f \mapsto (\varphi \mapsto f \ast \varphi).
$$
Next, we define $\mathcal{D}_{L^1_{\mathcal{W}}}(\R^d)$ as the Fr\'echet space consisting of all $\varphi \in C^\infty(\R^d)$ such that 
$
\max_{|\alpha| \leq n} \| \partial^\alpha \varphi\|_{L^1_{w_N}} < \infty
$
for all $N,n \in \N$.  Set 
$$
\mathcal{O}'_C(\mathcal{D},\mathcal{D}_{L^1_{\mathcal{W}}}) := \{ f \in \mathcal{D}'(\R^d) \, : \, f \ast \varphi \in \mathcal{D}_{L^1_{\mathcal{W}}}(\R^d)\mbox{ for all } \varphi \in \mathcal{D}(\R^d) \}
$$
and endow this space with the initial topology with respect to the mapping
$$
\mathcal{O}'_C(\mathcal{D}, \mathcal{D}_{L^1_{\mathcal{W}}}) \rightarrow L_b(\mathcal{D}(\R^d), \mathcal{D}_{L^1_{\mathcal{W}}}(\R^d)), \quad f \mapsto (\varphi \mapsto f \ast \varphi).
$$
The next lemma follows from the fact that $\partial^\alpha (f \ast \varphi) = f \ast \partial^\alpha \varphi$ for all $f \in \mathcal{D}'(\R^d)$ and $\varphi \in \mathcal{D}(\R^d)$.
\begin{lemma}\label{new} Let  $\mathcal{W} = (w_N)_N$ be an increasing weight system. Then,
$$
\mathcal{O}'_C(\mathcal{D},L^1_{\mathcal{W}}) = \mathcal{O}'_C(\mathcal{D}, \mathcal{D}_{L^1_{\mathcal{W}}})
$$ 
topologically. 
\end{lemma}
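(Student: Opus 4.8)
The plan is to split the assertion into the equality of the underlying sets and the coincidence of the two initial topologies; both reduce to the facts that $\mathcal{D}(\R^d)$ is stable under differentiation and that $\mathcal{D}_{L^1_{\mathcal{W}}}(\R^d)$ is continuously included in $L^1_{\mathcal{W}}(\R^d)$.

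First I would record the set equality. For $f \in \mathcal{D}'(\R^d)$ and $\varphi \in \mathcal{D}(\R^d)$ the convolution $f \ast \varphi$ is smooth and $\partial^\alpha(f \ast \varphi) = f \ast (\partial^\alpha \varphi)$ for every multi-index $\alpha$. Hence, if $f \ast \psi \in L^1_{\mathcal{W}}(\R^d)$ for all $\psi \in \mathcal{D}(\R^d)$, then in particular $\partial^\alpha(f \ast \varphi) = f \ast (\partial^\alpha\varphi) \in L^1_{\mathcal{W}}(\R^d)$ for all $\varphi \in \mathcal{D}(\R^d)$ and all $\alpha$, that is, $f \ast \varphi \in \mathcal{D}_{L^1_{\mathcal{W}}}(\R^d)$; the reverse inclusion is trivial since $\mathcal{D}_{L^1_{\mathcal{W}}}(\R^d) \subseteq L^1_{\mathcal{W}}(\R^d)$. (That each $f$ in either space induces a \emph{continuous} map into the respective target is exactly the closed graph statement already recorded in the footnotes.)

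Next I would handle the topologies. One inclusion is immediate: composing with the continuous inclusion $\mathcal{D}_{L^1_{\mathcal{W}}}(\R^d) \hookrightarrow L^1_{\mathcal{W}}(\R^d)$ yields a continuous map $L_b(\mathcal{D}(\R^d),\mathcal{D}_{L^1_{\mathcal{W}}}(\R^d)) \to L_b(\mathcal{D}(\R^d),L^1_{\mathcal{W}}(\R^d))$, so the identity $\mathcal{O}'_C(\mathcal{D},\mathcal{D}_{L^1_{\mathcal{W}}}) \to \mathcal{O}'_C(\mathcal{D},L^1_{\mathcal{W}})$ is continuous. For the converse I would use that the topology of $L_b(\mathcal{D}(\R^d),\mathcal{D}_{L^1_{\mathcal{W}}}(\R^d))$ is generated by the seminorms
$$
p_{B,N,n}(T) := \sup_{\varphi \in B}\, \max_{|\alpha| \leq n} \|\partial^\alpha(T\varphi)\|_{L^1_{w_N}}, \qquad B \subset \mathcal{D}(\R^d) \text{ bounded}, \ N,n \in \N,
$$
and, for $T\varphi = f \ast \varphi$, rewrite $\partial^\alpha(f \ast \varphi) = f \ast (\partial^\alpha\varphi)$. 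Since differentiation is continuous on $\mathcal{D}(\R^d)$, the set $B' := \{\partial^\alpha\varphi : \varphi \in B,\ |\alpha|\leq n\}$ is bounded in $\mathcal{D}(\R^d)$, whence $p_{B,N,n}(\varphi \mapsto f \ast \varphi) \leq \sup_{\psi \in B'}\|f \ast \psi\|_{L^1_{w_N}}$, and the right-hand side is one of the seminorms defining the topology of $\mathcal{O}'_C(\mathcal{D},L^1_{\mathcal{W}})$. This gives continuity of the identity in the other direction, which would finish the proof.

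The lemma is essentially bookkeeping, so I do not expect a genuine obstacle; the only point that needs a little care is keeping all convolutions of the form (distribution) $\ast$ (test function) via the identity $\partial^\alpha(f\ast\varphi)=f\ast(\partial^\alpha\varphi)$, so that boundedness of $B$ in $\mathcal{D}(\R^d)$ transfers to the derivative set $B'$ and the last step stays within the class of seminorms defining $\mathcal{O}'_C(\mathcal{D},L^1_{\mathcal{W}})$.
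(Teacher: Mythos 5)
Your proof is correct and is exactly the routine verification the paper leaves to the reader (the paper states the lemma with ``the verification \ldots is left to the reader'' and gives no argument of its own): the set equality and the continuity of the identity in the nontrivial direction both come down to $\partial^\alpha(f\ast\varphi)=f\ast(\partial^\alpha\varphi)$ together with the fact that $\{\partial^\alpha\varphi : \varphi\in B,\ |\alpha|\le n\}$ is again bounded in $\mathcal{D}(\R^d)$. No gaps; this is the intended bookkeeping argument.
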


Finally, we introduce the following condition on $\mathcal{W} = (w_N)_N$ (cf.\ \eqref{locally-bounded-decreasing}):
\begin{equation}
\forall N \in \N\, \exists \widetilde{N} \geq N \, : \, h_{N,\widetilde{N}} = \sup_{x \in \R^d}\frac{w_{N}(x + \: \cdot \:)}{w_{\widetilde{N}}(x)} \in L^\infty_{\operatorname{loc}}(\R^d).
\label{locally-bounded-increasing}
\end{equation}

\subsection{Characterization via the STFT}
We now discuss the mapping properties of the STFT on the space $\mathcal{O}'_C(\mathcal{D},L^1_{\mathcal{W}})$. To do so, we need to introduce some more notation. We set $\mathcal{P} := ((1+| \: \cdot \: |)^{-n})_{n}$, a decreasing weight system,  and  write 
$$
\mathcal{P}C(\R^d) := \varinjlim_{n \in \N} C(1+| \: \cdot \: |)^{-n}(\R^d)
$$
for the corresponding $(LB)$-space of continuous functions. Furthermore, we denote by $\overline{P}$  the maximal Nachbin family associated to $\mathcal{P}$. More explicitly, the space $\overline{P}$ consists of all non-negative upper semicontinuous functions $p$ on $\R^d$ such that $$\sup_{\xi \in \R^d} p(\xi)(1+|\xi|)^{n} < \infty$$ for all $n \in \N$. For a non-negative function $f$ on $\R^d$ it holds that 
\begin{itemize}
\item[$(i)$]  $\sup_{\xi \in \R^d} f(\xi) (1+|\xi|)^{-n} < \infty$ for some $n \in \N$ if and only if $\sup_{\xi \in \R^d} f(\xi) p(\xi) < \infty$ for all $p \in \overline{P}$.
\item[$(ii)$]  $\sup_{\xi \in \R^d} f(\xi)/ p(\xi) < \infty$ for some $p \in \overline{P}$ if and only if  $\sup_{\xi \in \R^d} f(\xi)(1+|\xi|)^{n}  < \infty$ for all $n \in \N$.
\end{itemize}
We shall use these properties without explicitly referring to them. Let $\mathcal{W} = (w_N)_{N}$ be an increasing weight system. By \cite[Prop.\ 1.5]{Komatsu3} and \cite[Thm.\ 3.1(d)]{B-M-S}, we may identify $L^1_{\mathcal{W}}(\R_x^d) \widehat{\otimes}_\varepsilon \mathcal{P}C(\R_\xi^d)$ with the space consisting of all $f \in C(\R^d_\xi, L^1_{\mathcal{W}}(\R^d_x))$ 
satisfying the following property: For all $N  \in \N$ there is $n \in \N$ such that
$$
\sup_{\xi \in \R^d}\|f(\: \cdot \:, \xi)\|_{L^1_{w_N}} (1+|\xi|)^{-n}< \infty.
$$
Hence, a function $f \in C(\R^d_\xi, L^1_{\mathcal{W}}(\R^d_x))$ belongs to $L^1_{\mathcal{W}}(\R_x^d) \widehat{\otimes}_\varepsilon \mathcal{P}C(\R_\xi^d)$ if and only if 
$$
\| f \|_{L^1_{w_N},p} := \sup_{\xi \in \R^d}\|f(\: \cdot \:, \xi)\|_{L^1_{w_{N}}} p(\xi) < \infty
$$
for all $N \in \N$ and $p \in \overline{P}$. By \cite[Thm.\ 3.1(c)]{B-M-S}, the topology of $L^1_{\mathcal{W}}(\R_x^d) \widehat{\otimes}_\varepsilon\mathcal{P}C(\R_\xi^d)$ is generated by the system of seminorms $\{ \| \: \cdot \: \|_{L^1_{w_N},p} \, : \, N \in \N, p \in \overline{P} \}$. We then have:
\begin{proposition}\label{STFT-convolutors}
Let $\psi \in \mathcal{D}(\R^d)$ and let $\mathcal{W} = (w_N)_{N}$ be an increasing weight system satisfying \eqref{locally-bounded-increasing}. Then, the mappings
$$
V_\psi: \mathcal{O}'_C(\mathcal{D},L^1_{\mathcal{W}})\rightarrow  L^1_{\mathcal{W}}(\R_x^d) \widehat{\otimes}_\varepsilon \mathcal{P}C(\R_\xi^d)
$$ 
and
$$
V^\ast_\psi:  L^1_{\mathcal{W}}(\R_x^d) \widehat{\otimes}_\varepsilon \mathcal{P}C(\R_\xi^d) \rightarrow \mathcal{O}'_C(\mathcal{D},L^1_{\mathcal{W}})
$$
are well-defined and continuous. 
\end{proposition}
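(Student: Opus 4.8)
The plan is to establish both continuity assertions by means of the desingularization machinery of Section \ref{STFT-distributions} together with the mapping properties of the STFT on test function spaces proved in Lemma \ref{STFT-test-smooth} and Lemma \ref{double-int-test-smooth}; the point is that convolution commutes with the STFT in a way that reduces everything to estimates on $\mathcal{D}(\R^d)$ and $L^1_{\mathcal{W}}(\R^d)$. Concretely, for $f\in\mathcal{O}'_C(\mathcal{D},L^1_{\mathcal{W}})$ and $\varphi\in\mathcal{D}(\R^d)$ one has the identity $V_\psi(f\ast\varphi)(x,\xi)=\int_{\R^d}V_{\psi}f(x-y,\xi)\,e^{-2\pi i\xi y}\varphi(y)\,\dy$ up to harmless unimodular factors, i.e.\ $V_\psi f$ is, columnwise in $\xi$, the ``convolutor kernel'' producing $V_\psi(f\ast\varphi)$. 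Since $f\ast\psi'\in L^1_{\mathcal{W}}(\R^d)$ for every $\psi'\in\mathcal{D}(\R^d)$ and $V_\psi f(x,\xi)=e^{-2\pi i\xi x}(f\ast M_\xi\check{\overline{\psi}})(x)$, the function $x\mapsto V_\psi f(x,\xi)$ lies in $L^1_{w_N}(\R^d)$ for each $N$ and each fixed $\xi$; the dependence on $\xi$ is controlled by differentiating under the convolution and using \eqref{locally-bounded-increasing} to absorb the translation in the weight, exactly as in the proof of Lemma \ref{well-defined-STFT}, which yields a polynomial bound in $|\xi|$ for $\|V_\psi f(\,\cdot\,,\xi)\|_{L^1_{w_N}}$. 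This is precisely the membership criterion for $L^1_{\mathcal{W}}(\R^d_x)\widehat{\otimes}_\varepsilon\mathcal{P}C(\R^d_\xi)$ recalled just before the statement.

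For the continuity of $V_\psi$ I would argue via the transpose/desingularization identity rather than by a direct seminorm estimate, since $\mathcal{O}'_C(\mathcal{D},L^1_{\mathcal{W}})$ carries the initial topology with respect to $f\mapsto(\varphi\mapsto f\ast\varphi)$ into $L_b(\mathcal{D}(\R^d),L^1_{\mathcal{W}}(\R^d))$. Thus it suffices to show that for each bounded $B\subset\mathcal{D}(\R^d)$ and each seminorm $\|\,\cdot\,\|_{L^1_{w_N},p}$ on the target tensor space, the composition $f\mapsto V_\psi f\mapsto \|V_\psi f\|_{L^1_{w_N},p}$ is dominated by $\sup_{\varphi\in B}\|f\ast\varphi\|_{L^1_{w_{N'}}}$ for suitable $N'$ and $B$. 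Using $V_\psi f(x,\xi)=e^{-2\pi i\xi x}(f\ast M_\xi\check{\overline{\psi}})(x)$ directly, $\|V_\psi f(\,\cdot\,,\xi)\|_{L^1_{w_N}}=\|f\ast M_\xi\check{\overline{\psi}}\|_{L^1_{w_N}}$; the family $\{(1+|\xi|)^{-k}M_\xi\check{\overline{\psi}}:\xi\in\R^d\}$ is bounded in $\mathcal{D}_K(\R^d)$ for $k$ large enough and $K=\operatorname{supp}\psi$, by the computation in Lemma \ref{well-defined-STFT}. Hence $\|V_\psi f(\,\cdot\,,\xi)\|_{L^1_{w_N}}p(\xi)\le (1+|\xi|)^k p(\xi)\sup_{\varphi\in B_k}\|f\ast\varphi\|_{L^1_{w_N}}$ with $B_k$ this bounded set; since $p\in\overline{P}$ the factor $(1+|\xi|)^k p(\xi)$ is bounded, and continuity follows.

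For the continuity of $V^\ast_\psi$ I would use Lemma \ref{new} to replace $L^1_{\mathcal{W}}(\R^d)$ by $\mathcal{D}_{L^1_{\mathcal{W}}}(\R^d)$ in the convolutor space, and then invoke the closed graph (or directly the initial-topology) characterization of $\mathcal{O}'_C(\mathcal{D},L^1_{\mathcal{W}})$: for $F$ in the tensor space, $V^\ast_\psi F\in\mathcal{D}'(\R^d)$ by Proposition \ref{STFT-D-dual}, and one must check that $(V^\ast_\psi F)\ast\varphi\in L^1_{\mathcal{W}}(\R^d)$ for all $\varphi\in\mathcal{D}(\R^d)$ with the corresponding estimates. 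Since convolution intertwines with $V^\ast_\psi$, $(V^\ast_\psi F)\ast\varphi=V^\ast_\psi(G)$ where $G(x,\xi)$ is obtained from $F$ by a $\xi$-parametrised convolution with (modulated, reflected) $\varphi$, and the kernel $\mathbf{1}\otimes\varphi$ acts boundedly on $L^1_{\mathcal{W}}(\R^d_x)\widehat{\otimes}_\varepsilon\mathcal{P}C(\R^d_\xi)$ by Young's inequality together with \eqref{locally-bounded-increasing}; alternatively one can reconstruct via \eqref{reconstruction-D-dual} and write $\langle V^\ast_\psi F,\overline{\varphi}\rangle$-type pairings and estimate directly using that for $G\in L^1_w(\R^d_x)\widehat{\otimes}_\varepsilon\mathcal{P}C(\R^d_\xi)$ the integral $\int\!\!\int G(x,\xi)M_\xi T_x\psi\,\dx\dxi$ converges absolutely in $L^1_w(\R^d)$ whenever $w(\,\cdot\,+\operatorname{supp}\psi)\lesssim w'(\,\cdot\,)$ pointwise, which is exactly \eqref{locally-bounded-increasing} in the guise $h_{N,\widetilde N}$. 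I expect the main obstacle to be bookkeeping: correctly tracking the unimodular prefactors $e^{\pm2\pi i\xi x}$ in the intertwining relations between convolution and the STFT, and making sure the ``convolution in $x$, parametrised by $\xi$'' operator genuinely maps $L^1_{\mathcal{W}}(\R^d_x)\widehat{\otimes}_\varepsilon\mathcal{P}C(\R^d_\xi)$ into itself continuously, which requires combining Young's inequality in the $x$-variable (with the weight shift absorbed via $h_{N,\widetilde N}\in L^\infty_{\operatorname{loc}}$) with the trivial stability of the polynomial weights $(1+|\xi|)^{-n}$ under the $\xi$-dependence coming from modulation.
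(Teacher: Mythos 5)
Your treatment of $V^\ast_\psi$ contains a genuine gap. Both of your routes reduce the problem to a statement that is false: namely, that $V^\ast_\psi$ carries (elements $G$ of) $L^1_{\mathcal{W}}(\R^d_x)\widehat{\otimes}_\varepsilon\mathcal{P}C(\R^d_\xi)$ into $L^1_{\mathcal{W}}(\R^d)$, or that $\int\!\!\int G(x,\xi)M_\xi T_x\psi\,\dx\dxi$ converges absolutely in $L^1_w(\R^d)$. Membership in the tensor space only gives $\|G(\cdot,\xi)\|_{L^1_{w_N}}\lesssim(1+|\xi|)^{n}$ for some $n$, while $\|M_\xi T_x\psi\|_{L^1_w}=\int|\psi(t-x)|w(t)\,\dt$ is independent of $\xi$, so the $\xi$-integral has no reason to converge absolutely; and $V^\ast_\psi$ genuinely fails to map the tensor space into $L^1_{\mathcal{W}}(\R^d)$, since $V_\psi\delta(x,\xi)=\overline{\psi(-x)}$ belongs to it while $V^\ast_\gamma V_\psi\delta=(\gamma,\psi)_{L^2}\,\delta$. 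The weighted Young inequality for the kernel $\mathbf{1}\otimes\varphi$ only preserves the polynomial growth in $\xi$; it does not produce the decay you need. That decay has to be extracted from the smoothness of the test function: up to a unimodular factor, $(M_\xi T_x\psi\ast\varphi)(t)=e^{2\pi i\xi t}\,\overline{V_\psi\check{\overline{\varphi}}(x-t,\xi)}$, and by Proposition \ref{STFT-D} the functions $V_\psi\check{\overline{\varphi}}$, for $\varphi$ ranging over a bounded set $B\subset\mathcal{D}(\R^d)$, are supported in $K\times\R^d$ and dominated by a single $p\in\overline{P}$. It is this super-polynomial decay in $\xi$, paired against the polynomial growth of $\|F(\cdot,\xi)\|_{L^1_{w_{\widetilde N}}}$ and combined with $h_{N,\widetilde N}$ to absorb the translation in $w_N$, that makes
$$
\sup_{\varphi\in B}\|V^\ast_\psi F\ast\varphi\|_{L^1_{w_N}}\leq\sup_{\varphi\in B}\int\!\!\int\!\!\int_{\R^{3d}}|F(x,\xi)|\,|V_\psi\check{\overline{\varphi}}(x-t,\xi)|\,w_N(t)\,\dx\dxi\dt
$$
finite and yields the continuity estimate. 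This is the step your sketch is missing.

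The $V_\psi$ half is essentially the argument in the paper, but two points need repair. First, the family $\{(1+|\xi|)^{-k}M_\xi\check{\overline{\psi}}:\xi\in\R^d\}$ is \emph{not} bounded in $\mathcal{D}_K$ for any finite $k$: its $n$-th derivative seminorm grows like $(1+|\xi|)^{n-k}$, which is unbounded for $n>k$. You must damp by $p(\xi)$ itself, i.e.\ take $B=\{p(\xi)M_\xi\check{\overline{\psi}}:\xi\in\R^d\}$, which is bounded in $\mathcal{D}_K$ precisely because $p\in\overline{P}$ decays faster than every polynomial; with this $B$ your displayed inequality becomes an identity $\sup_\xi\|f\ast M_\xi\check{\overline{\psi}}\|_{L^1_{w_N}}p(\xi)=\sup_{\varphi\in B}\|f\ast\varphi\|_{L^1_{w_N}}$ and continuity follows. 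Second, the identification of $L^1_{\mathcal{W}}(\R^d_x)\widehat{\otimes}_\varepsilon\mathcal{P}C(\R^d_\xi)$ recalled before the statement also requires $\xi\mapsto V_\psi f(\cdot,\xi)$ to be a \emph{continuous} map $\R^d\to L^1_{\mathcal{W}}(\R^d)$; this is not automatic from the pointwise bounds and should be checked, e.g.\ via the continuity of $\xi\mapsto M_\xi\check{\overline{\psi}}$ into $\mathcal{D}(\R^d)$ together with dominated convergence for the prefactor $e^{-2\pi i x\xi}$.
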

\begin{proof}
We first consider $V_\psi$. Observe that $V_\psi f( \: \cdot \:,\xi) \in L^1_{\mathcal{W}}(\R^d)$ for $\xi \in \R^d$ fixed, as follows from the representation $V_\psi f(x, \xi) = e^{-2\pi i x\xi} (f \ast M_\xi \check{\overline{\psi}})(x)$. We now prove that  $\R^d \rightarrow  L^1_{\mathcal{W}}(\R^d), \, \xi \mapsto V_\psi f(\: \cdot \: , \xi)$ is continuous. Since the mappings
$\R^d \rightarrow \mathcal{D}(\R^d), \, \xi \mapsto  M_\xi \check{\overline{\psi}}$ and $\mathcal{D}(\R^d) \rightarrow L^1_{\mathcal{W}}(\R^d), \, \varphi \mapsto f \ast \varphi$
are continuous, the mapping 
\begin{equation}
\R^d \rightarrow  L^1_{\mathcal{W}}(\R^d), \quad \xi \mapsto f \ast M_\xi \check{\overline{\psi}}
\label{continuity-conv-L1}
\end{equation}
 is also continuous. Let $\xi_0, \xi \in \R^d$ and $N \in \N$ be arbitrary. We have that
\begin{align*}
&\| V_\psi f(x, \xi) - V_\psi f(x , \xi_0) \|_{L^1_{w_N},x} \\
&=  \|  e^{-2\pi i x\xi} (f \ast M_\xi \check{\overline{\psi}})(x) -  e^{-2\pi i x \xi_0} (f \ast M_{\xi_0} \check{\overline{\psi}})(x)\|_{L^1_{w_N},x} \\
&\leq \| f \ast M_\xi \check{\overline{\psi}} - f \ast M_{\xi_0} \check{\overline{\psi}}\|_{L^1_{w_N}} + \|  (e^{-2\pi i x \xi} - e^{-2\pi i x \xi_0}) (f \ast M_{\xi_0} \check{\overline{\psi}})(x)\|_{L^1_{w_N},x}. 
\end{align*}
The first term tends to zero as $\xi \rightarrow \xi_0$ because the mapping \eqref{continuity-conv-L1} is continuous, while the second term tends to zero as $\xi \rightarrow \xi_0$ because of Lebesgue's dominated convergence theorem. Let $N \in \N$ and $p \in \overline{P}$ be arbitrary.  The set $B = \{  M_\xi \check{\overline{\psi}} p(\xi) \, : \, \xi \in \R^d\}$  is bounded in $\mathcal{D}(\R^d)$. Hence,
$$
\|V_\psi f\|_{L^1_{w_N},p} = \sup_{\xi \in \R^d}  \| f \ast M_\xi \check{\overline{\psi}} \|_{L^1_{w_N}} p(\xi) = \sup_{\varphi \in B} \| f \ast \varphi \|_{L^1_{w_N}},
$$
which shows that $V_\psi$ is well-defined and continuous. Next, we treat $V^\ast_\psi$. Let $B \subset \mathcal{D}(\R^d)$  bounded and $N \in \N$ be arbitrary. Choose $\widetilde{N} \geq N$ according to \eqref{locally-bounded-increasing}. Proposition \ref{STFT-D} implies that there are $K \Subset \R^d$ and $p \in \overline{P}$ such that  $\operatorname{supp} V_\psi \check{\overline{\varphi}} \subseteq K \times \R^d$ and $|V_\psi \check{\overline{\varphi}}(x,\xi)| \leq p(\xi)$ for all $(x,\xi) \in  K \times \R^d$ and $\varphi \in B$. Set $\widetilde{p} = p(\:\cdot\:)(1+ |\: \cdot \:|)^{d+1} \in \overline{P}$. Hence, 
\begin{align*}
\sup_{\varphi \in B} \| V^\ast_\psi F \ast \varphi \|_{L^1_{w_N}} &\leq \sup_{\varphi \in B} \int \int \int_{\R^{3d}} |F(x,\xi)| |V_\psi \check{\overline{\varphi}}(x-t,\xi)| w_N(t) \dx \dxi \dt \\
&\leq |K|\sup_{x \in K} \int \int_{\R^{2d}} |F(x +t,\xi)| p(\xi) w_N(t) \dxi \dt \\
& \leq C \|F\|_{\widetilde{p}, L^1_{w_{\widetilde{N}}}}
\end{align*}
for all $F \in L^1_{\mathcal{W}}(\R^d) \widehat{\otimes}_\varepsilon \mathcal{P}C(\R^d)$, where 
$
C = |K| \|\check{h}_{N,\widetilde{N}}\|_{K} \int_{\R^d} (1+|\xi|)^{-(d+1)} \dxi < \infty.
$
 This shows that $V^*_\psi$ is well-defined and continuous.
\end{proof}
Proposition \ref{STFT-convolutors} together with \eqref{reconstruction-D-dual} implies the following corollary:
\begin{corollary}\label{completeness-OC}
Let $\mathcal{W} = (w_N)_{N}$ be an increasing weight system satisfying \eqref{locally-bounded-increasing}. Then,  $\mathcal{O}'_C(\mathcal{D},L^1_{\mathcal{W}})$ is complete.
\end{corollary}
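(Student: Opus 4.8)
The plan is to use the short-time Fourier transform to identify $\mathcal{O}'_C(\mathcal{D},L^1_{\mathcal{W}})$ with a complemented subspace of the locally convex space $G := L^1_{\mathcal{W}}(\R^d_x)\widehat{\otimes}_\varepsilon\mathcal{P}C(\R^d_\xi)$, which is complete since it is a completed tensor product. Completeness of $\mathcal{O}'_C(\mathcal{D},L^1_{\mathcal{W}})$ will then follow because a complemented, hence closed, subspace of a complete l.c.s.\ is complete.

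Concretely, I would fix $\psi\in\mathcal{D}(\R^d)\setminus\{0\}$ and choose a synthesis window $\gamma\in\mathcal{D}(\R^d)$ for $\psi$ with $(\gamma,\psi)_{L^2}=1$ (for instance $\gamma=\psi/\|\psi\|_{L^2}^2$). By Proposition \ref{STFT-convolutors}, applied to the windows $\psi$ and $\gamma$, the maps $V_\psi\colon\mathcal{O}'_C(\mathcal{D},L^1_{\mathcal{W}})\to G$ and $V^\ast_\gamma\colon G\to\mathcal{O}'_C(\mathcal{D},L^1_{\mathcal{W}})$ are well-defined and continuous. Next I would verify that, for $f\in\mathcal{O}'_C(\mathcal{D},L^1_{\mathcal{W}})\subseteq\mathcal{D}'(\R^d)$, the element $V_\psi f\in G$ produced by Proposition \ref{STFT-convolutors} acts on $\mathcal{D}(\R^d_x)\widehat{\otimes}_i\mathcal{S}(\R^d_\xi)$ by integration, that is, it coincides with the distributional STFT of Section \ref{STFT-distributions} (cf.\ Lemma \ref{STFT-transpose}), and that under this identification the synthesis operator $V^\ast_\gamma$ of Proposition \ref{STFT-convolutors} is the restriction of the one appearing in \eqref{reconstruction-D-dual}. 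The reconstruction formula \eqref{reconstruction-D-dual}, restricted to $\mathcal{O}'_C(\mathcal{D},L^1_{\mathcal{W}})$, then gives $V^\ast_\gamma\circ V_\psi=\operatorname{id}$ on $\mathcal{O}'_C(\mathcal{D},L^1_{\mathcal{W}})$. In particular $V_\psi$ is injective and a topological embedding, its inverse on the range being the restriction of the continuous map $V^\ast_\gamma$, while $P:=V_\psi\circ V^\ast_\gamma\colon G\to G$ is a continuous linear projection.

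It remains to identify the range of $P$: one has $P(G)=V_\psi\bigl(\mathcal{O}'_C(\mathcal{D},L^1_{\mathcal{W}})\bigr)$, since one inclusion is immediate from the definition of $P$ and the reverse follows from $V_\psi f=V_\psi(V^\ast_\gamma V_\psi f)=P(V_\psi f)$ for $f\in\mathcal{O}'_C(\mathcal{D},L^1_{\mathcal{W}})$. Hence $V_\psi\bigl(\mathcal{O}'_C(\mathcal{D},L^1_{\mathcal{W}})\bigr)=\ker(\operatorname{id}_G-P)$ is closed in the complete space $G$, therefore complete, and $\mathcal{O}'_C(\mathcal{D},L^1_{\mathcal{W}})$, being topologically isomorphic to it via $V_\psi$, is complete as well. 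The only step demanding genuine care is the compatibility claim in the preceding paragraph — namely that the two a priori distinct meanings of $V^\ast_\gamma$ coincide on elements of the form $V_\psi f$ — but this is a routine unwinding of the definitions of the STFT and of its adjoint on $\mathcal{D}'(\R^d)$ combined with Lemma \ref{STFT-transpose}; everything else is soft functional analysis and poses no real difficulty.
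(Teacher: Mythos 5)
Your proposal is correct and is exactly the argument the paper intends: the one-line proof in the text ("Proposition \ref{STFT-convolutors} together with \eqref{reconstruction-D-dual} implies the following corollary") is precisely the embedding of $\mathcal{O}'_C(\mathcal{D},L^1_{\mathcal{W}})$ via $V_\psi$ onto a complemented subspace of the complete space $L^1_{\mathcal{W}}(\R^d_x)\widehat{\otimes}_\varepsilon\mathcal{P}C(\R^d_\xi)$, with $V^\ast_\gamma$ as continuous left inverse. You spell out the compatibility of the two meanings of $V^\ast_\gamma$, which the paper leaves implicit; otherwise the two arguments coincide.
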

\subsection{A predual} Given an increasing weight system $\mathcal{W} =(w_N)_{N}$, we define its dual decreasing weight system as $\mathcal{W}^\circ = (1/w_N)_{N}$. The aim of this subsection is to show that the dual space $(\dot{\mathcal{B}}_{\mathcal{W}^\circ}(\R^d))'$, endowed with a suitable $\mathfrak{S}$-topology, is topologically equal to $\mathcal{O}'_C(\mathcal{D},L^1_{\mathcal{W}})$. We start by introducing the following general notion:
\begin{definition}
Let $E = \varinjlim_{N} E_N$ be an $(LF)$-space. We define 
$$
\mathfrak{S} = \{ B \subset E \, : \, B \mbox{ is contained and bounded in $E_N$ for some $N \in \N$} \}
$$
and write $bs(E',E)$ for the $\mathfrak{S}$-topology on $E'$ (the topology of uniform convergence on the sets of $\mathfrak{S}$). Grothendieck's factorization theorem implies that $bs(E',E)$ does not depend on the defining inductive spectrum of $E$.
\end{definition}
\begin{remark}
Let $E = \varinjlim_{N} E_N$ be an $(LF)$-space. The bipolar theorem implies that $bs(E', E) = b(E',E)$ if and only if $E$ is \emph{quasiregular}, i.e.,  if for every bounded set $B$ in $E$ there is $N \in \N$ and a bounded set $A$ in  $E_N$ such that $B \subseteq \overline{A}^{E}$. We refer to \cite{D-D} for more information on quasiregular $(LF)$-spaces.
\end{remark}

We then have:
\begin{theorem}\label{thm-predual}
Let $\mathcal{W} = (w_N)_{N}$ be an increasing weight system satisfying \eqref{locally-bounded-increasing}. Then, $(\dot{\mathcal{B}}_{\mathcal{W}^\circ}(\R^d))'_{bs}= \mathcal{O}'_C(\mathcal{D}, L^1_\mathcal{W})$.
\end{theorem}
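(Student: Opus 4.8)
The plan is to construct a linear bijection $T$ between $\mathcal{O}'_C(\mathcal{D},L^1_{\mathcal{W}})$ and $(\dot{\mathcal{B}}_{\mathcal{W}^\circ}(\R^d))'$ — namely, $T$ sends $f$ to its unique continuous extension from $\mathcal{D}(\R^d)$ to $\dot{\mathcal{B}}_{\mathcal{W}^\circ}(\R^d)$ — and to verify that $T$ is a topological isomorphism onto $(\dot{\mathcal{B}}_{\mathcal{W}^\circ}(\R^d))'_{bs}$ which acts as the identity on the underlying subsets of $\mathcal{D}'(\R^d)$. I would begin with the bookkeeping: $\mathcal{W}^\circ=(1/w_N)_N$ is a decreasing weight system, and substituting $x\mapsto x-t$ in \eqref{locally-bounded-increasing} shows that \eqref{locally-bounded-increasing} for $\mathcal{W}$ is equivalent to \eqref{locally-bounded-decreasing} for $\mathcal{V}=\mathcal{W}^\circ$; hence all results of Section \ref{sect-indlimit-smooth} are available for $\mathcal{V}=\mathcal{W}^\circ$. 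A cut-off argument shows that $\mathcal{D}(\R^d)$ is dense in $\dot{\mathcal{B}}_{\mathcal{W}^\circ}(\R^d)$, so restriction to $\mathcal{D}(\R^d)$ is an injection $(\dot{\mathcal{B}}_{\mathcal{W}^\circ}(\R^d))'\hookrightarrow\mathcal{D}'(\R^d)$ and any continuous functional on a dense subspace extends uniquely; I fix $\psi\in\mathcal{D}(\R^d)\setminus\{0\}$ and a synthesis window $\gamma\in\mathcal{D}(\R^d)$ once and for all.

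\emph{Step 1: $\mathcal{O}'_C(\mathcal{D},L^1_{\mathcal{W}})\subseteq(\dot{\mathcal{B}}_{\mathcal{W}^\circ}(\R^d))'$ and $T$ is continuous.} For $f\in\mathcal{O}'_C(\mathcal{D},L^1_{\mathcal{W}})$, Proposition \ref{STFT-convolutors} gives $V_\psi f\in L^1_{\mathcal{W}}(\R^d_x)\widehat{\otimes}_\varepsilon\mathcal{P}C(\R^d_\xi)$, so $\xi\mapsto\|V_\psi f(\cdot,\xi)\|_{L^1_{w_N}}$ grows at most polynomially for each $N$. For $\varphi\in\mathcal{D}(\R^d)$ the desingularization formula \eqref{desing-D-dual} reads $\langle f,\varphi\rangle=\frac{1}{(\gamma,\psi)_{L^2}}\iint_{\R^{2d}}V_\psi f(x,\xi)V_{\overline{\gamma}}\varphi(x,-\xi)\,dx\,d\xi$, and I would extend the right-hand side to $\varphi\in\dot{\mathcal{B}}_{\mathcal{W}^\circ}(\R^d)$. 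If $\varphi\in\dot{\mathcal{B}}_{1/w_N}(\R^d)$ and $\widetilde{N}$ is chosen as in \eqref{locally-bounded-increasing}, then Lemma \ref{STFT-test-smooth} (applied with $w=1/w_N$, $v=1/w_{\widetilde{N}}$, which is legitimate precisely by \eqref{locally-bounded-increasing}) bounds $|V_{\overline{\gamma}}\varphi(x,\xi)|$ by $C_n\|\varphi\|_{1/w_N,n}\,w_{\widetilde{N}}(x)(1+|\xi|)^{-n}$ for every $n$; combined with the polynomial growth of $\|V_\psi f(\cdot,\xi)\|_{L^1_{w_{\widetilde{N}}}}$ this makes the integral converge absolutely and shows that $|\langle f,\varphi\rangle|$ is dominated by a continuous seminorm of $\varphi$ on $\dot{\mathcal{B}}_{1/w_N}(\R^d)$. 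Thus $f$ extends to $Tf\in(\dot{\mathcal{B}}_{\mathcal{W}^\circ}(\R^d))'$. For continuity into the $bs$-topology, take $B$ bounded in some $\dot{\mathcal{B}}_{1/w_N}(\R^d)$; then $\{V_{\overline{\gamma}}\chi:\chi\in B\}$ is bounded in the $\widetilde{N}$-th step of $(\mathcal{W}^\circ)_{\operatorname{pol},0}C(\R^{2d})$, so $\sup_{\chi\in B}|V_{\overline{\gamma}}\chi(x,-\xi)|\,w_{\widetilde{N}}(x)^{-1}\le p(\xi)$ where $p:=(1+|\cdot|)^{d+1}\inf_n C_n(B)(1+|\cdot|)^{-n}$ is upper semicontinuous and lies in $\overline{P}$ — the infimum being exactly what turns polynomial-by-polynomial decay into a genuine rapidly decreasing weight — whence $\sup_{\chi\in B}|\langle f,\chi\rangle|\le C\|V_\psi f\|_{L^1_{w_{\widetilde{N}}},p}$, a continuous seminorm of $f$ by Proposition \ref{STFT-convolutors}.

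\emph{Step 2: $(\dot{\mathcal{B}}_{\mathcal{W}^\circ}(\R^d))'\subseteq\mathcal{O}'_C(\mathcal{D},L^1_{\mathcal{W}})$ and $T^{-1}$ is continuous.} Given $u\in(\dot{\mathcal{B}}_{\mathcal{W}^\circ}(\R^d))'$, put $f:=u|_{\mathcal{D}(\R^d)}\in\mathcal{D}'(\R^d)$. Fix $\varphi\in\mathcal{D}(\R^d)$ with $K=\operatorname{supp}\varphi$ and $N\in\N$, and pick $\widetilde{N}$ as in \eqref{locally-bounded-increasing}. The device is that for $h\in C_c(\R^d)$ with $\|h\|_{L^\infty}\le1$ the vector integral $\Phi_h:=\int_{\R^d}h(x)w_N(x)T_x\check{\varphi}\,dx$ is an element of $\mathcal{D}(\R^d)$, and a direct estimate invoking $h_{N,\widetilde{N}}\in L^\infty_{\operatorname{loc}}(\R^d)$ gives $\|\Phi_h\|_{1/w_{\widetilde{N}},n}\le|K|\,\|h_{N,\widetilde{N}}\|_{L^\infty(K)}\,\|\varphi\|_{K,n}$ uniformly in $h$; hence $\{\Phi_h:h\in C_c(\R^d),\|h\|_{L^\infty}\le1\}$ is bounded in $\dot{\mathcal{B}}_{1/w_{\widetilde{N}}}(\R^d)$, i.e. lies in $\mathfrak{S}$. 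Since $u(\Phi_h)=\int_{\R^d}h(x)w_N(x)(f\ast\varphi)(x)\,dx$ (exchanging $u$ with a Riemann integral in the Fréchet space $\mathcal{D}_L$, $L=K+\operatorname{supp}h$), boundedness of $u$ on this set forces $\|(f\ast\varphi)w_N\|_{L^1}=\sup_h|u(\Phi_h)|<\infty$; letting $N$ and $\varphi$ vary, $f\in\mathcal{O}'_C(\mathcal{D},L^1_{\mathcal{W}})$, and $Tf=u$ since they agree on the dense subspace $\mathcal{D}(\R^d)$. The same estimate carried out uniformly over a set $A$ bounded in some $\mathcal{D}_L$ produces $B_{A,N}\in\mathfrak{S}$ with $\sup_{\varphi\in A}\|(f\ast\varphi)w_N\|_{L^1}\le\sup_{\chi\in B_{A,N}}|\langle f,\chi\rangle|$, so the seminorms defining the topology of $\mathcal{O}'_C(\mathcal{D},L^1_{\mathcal{W}})$ are dominated by $bs$-seminorms, i.e. $T^{-1}$ is continuous.

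Combining the two steps, $T$ is a topological isomorphism of $\mathcal{O}'_C(\mathcal{D},L^1_{\mathcal{W}})$ onto $(\dot{\mathcal{B}}_{\mathcal{W}^\circ}(\R^d))'_{bs}$ acting identically on $\mathcal{D}'(\R^d)$, which is the assertion. I expect the main obstacle to be the construction in Step 2: the trick of computing $\|(f\ast\varphi)w_N\|_{L^1}$ by testing against $C_c$-functions $h$, so that the family $\{\Phi_h\}$ stays inside $\mathcal{D}(\R^d)$ and — crucially — remains bounded in one fixed step $\dot{\mathcal{B}}_{1/w_{\widetilde{N}}}(\R^d)$ of the inductive spectrum. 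This is where \eqref{locally-bounded-increasing} is indispensable and where one might naively expect to need a condition of type $(V)$, which is in fact not required. A close second difficulty is the passage, in Step 1, from step-by-step polynomial control of $V_{\overline{\gamma}}\chi$ to a single weight of class $\overline{P}$ via an infimum, which is needed for the $bs$-continuity of $T$.
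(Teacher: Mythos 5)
Your proof is correct, and its architecture coincides with the paper's: the inclusion $\mathcal{O}'_C(\mathcal{D},L^1_{\mathcal{W}})\hookrightarrow(\dot{\mathcal{B}}_{\mathcal{W}^\circ}(\R^d))'_{bs}$ is obtained, exactly as in the paper, from the desingularization formula together with Proposition \ref{STFT-convolutors} and the bounds of Lemma \ref{STFT-test-smooth} (including the passage from step-by-step polynomial decay to a single weight in $\overline{P}$, which is property $(ii)$ of $\overline{P}$ recorded before Proposition \ref{STFT-convolutors}, so this is less of an obstacle than you anticipate); and the reverse inclusion is obtained by computing $\|(f\ast\varphi)w_N\|_{L^1}$ as a supremum of pairings $\langle f,\check{\varphi}\ast(hw_N)\rangle$ over a family that is bounded in the single step $\dot{\mathcal{B}}_{1/w_{\widetilde{N}}}(\R^d)$ --- your family $\{\Phi_h\}$ is precisely the family controlled by \eqref{bounded-set-later} in the paper. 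The one genuine difference is how that duality identity is justified. The paper lets $h$ run over the unit ball of $C_0(\R^d)$, which forces it first to prove the representation of elements of $(\dot{\mathcal{B}}_{1/w}(\R^d))'$ by measures (Proposition \ref{convspaces-proposition-structural}) and then Corollary \ref{conv-b-dot}$(ii)$ via Fubini; you instead restrict to $h\in C_c(\R^d)$ with $\|h\|_{L^\infty}\le 1$, so that $\check{\varphi}\ast(hw_N)$ stays inside $\mathcal{D}(\R^d)$, the identity $u(\check{\varphi}\ast(hw_N))=\int_{\R^d}(f\ast\varphi)(x)h(x)w_N(x)\,\dx$ follows by commuting $u$ with a vector-valued Riemann integral in $\mathcal{D}_L$, and the supremum over such $h$ still recovers the $L^1_{w_N}$-norm of the continuous function $f\ast\varphi$. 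This buys you independence from the structure theorem --- you only use continuity of $u$ on one step and density of $\mathcal{D}(\R^d)$ --- at essentially no cost; the paper's route has the side benefit of establishing Corollary \ref{conv-b-dot} in the generality (non-compactly supported $h\in C(1/v)_0(\R^d)$) that it reuses later, e.g. in the proof of Proposition \ref{eq-L1}.
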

We need some preparation. The proof of the next proposition is standard and therefore omitted.
\begin{proposition}
\label{convspaces-proposition-structural}
Let $w$ be a positive  continuous function and let $f \in (\dot{\mathcal{B}}_{1/w}(\R^d))'$. Then, there are $n \in \N$ and regular complex Borel measures $\mu_\alpha \in (C_0(\R^d))'$, $|\alpha| \leq n$, such that
\begin{equation}
\langle f, \varphi \rangle = \sum_{|\alpha| \leq n} (-1)^{|\alpha|} \int_{\R^d} \frac{\partial^\alpha \varphi(t)}{w(t)} {\rm d}\mu_\alpha(t), \qquad \varphi \in  \dot{\mathcal{B}}_{1/w}(\R^d).
\label{repr-dual-bdot}
\end{equation}
\end{proposition}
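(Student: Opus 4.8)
The plan is to reduce \eqref{repr-dual-bdot} to the Riesz representation theorem by isometrically embedding $\dot{\mathcal{B}}_{1/w}(\R^d)$ into a finite product of copies of the classical space $C_0(\R^d)$. First I would exploit the continuity of $f$ on the Fr\'echet space $\dot{\mathcal{B}}_{1/w}(\R^d) = \varprojlim_{n} \dot{\mathcal{B}}^n_{1/w}(\R^d)$ to produce $n \in \N$ and $C > 0$ with $|\langle f, \varphi \rangle| \le C \|\varphi\|_{1/w, n}$ for all $\varphi$; since $\mathcal{D}(\R^d)$ is dense in $\dot{\mathcal{B}}^n_{1/w}(\R^d)$, this identifies $f$ with a continuous linear functional on the Banach space $\dot{\mathcal{B}}^n_{1/w}(\R^d)$, and it suffices to represent that functional.

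Next I would record two elementary observations. Since $w$ is positive and continuous, multiplication by $1/w$ is an isometric isomorphism of $\dot{\mathcal{B}}^0_{1/w}(\R^d) = C(1/w)_0(\R^d)$ onto $C_0(\R^d)$; hence, by the Riesz representation theorem, every continuous functional $\nu$ on $C(1/w)_0(\R^d)$ is of the form $\langle \nu, g \rangle = \int_{\R^d} g(t)/w(t)\, {\rm d}\mu(t)$ for a unique regular complex Borel measure $\mu \in (C_0(\R^d))'$. Secondly, the map
$$
j \colon \dot{\mathcal{B}}^n_{1/w}(\R^d) \to \prod_{|\alpha| \le n} C(1/w)_0(\R^d), \qquad \varphi \mapsto (\partial^\alpha \varphi)_{|\alpha| \le n},
$$
is well-defined (this is exactly the defining condition of $\dot{\mathcal{B}}^n_{1/w}$) and is an isometry onto a closed subspace when the product carries the norm $\max_{|\alpha| \le n} \| \, \cdot \, \|_{1/w}$.

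Finally, I would apply the Hahn--Banach theorem to extend $f \circ j^{-1}$ from $j(\dot{\mathcal{B}}^n_{1/w}(\R^d))$ to the whole product; such an extension is given by a tuple $(\nu_\alpha)_{|\alpha| \le n}$ of functionals on $C(1/w)_0(\R^d)$ paired with $(g_\alpha)_\alpha$ as $\sum_{|\alpha| \le n} \langle \nu_\alpha, g_\alpha \rangle$. Representing each $\nu_\alpha$ by a measure via the first observation and absorbing the sign $(-1)^{|\alpha|}$ into $\mu_\alpha$, I obtain \eqref{repr-dual-bdot} for every $\varphi \in \dot{\mathcal{B}}_{1/w}(\R^d) \subseteq \dot{\mathcal{B}}^n_{1/w}(\R^d)$. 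I do not anticipate a genuine obstacle here; the only step worth a moment's care is checking that $j$ has closed range so that Hahn--Banach applies on the subspace $j(\dot{\mathcal{B}}^n_{1/w}(\R^d))$ — alternatively one can sidestep this by directly extending each coordinate functional, but the closed-range route is the cleanest.
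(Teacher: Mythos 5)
Your argument is correct and is exactly the standard Hahn--Banach/Riesz representation proof that the paper has in mind when it writes ``the proof is standard and therefore omitted'': bound $f$ by a single seminorm $\|\cdot\|_{1/w,n}$, embed $\dot{\mathcal{B}}^n_{1/w}(\R^d)$ isometrically into $\prod_{|\alpha|\le n} C(1/w)_0(\R^d)\cong \prod_{|\alpha|\le n} C_0(\R^d)$ via $\varphi\mapsto(\partial^\alpha\varphi)_\alpha$, extend, and represent each coordinate functional by a measure. The only cosmetic remark is that Hahn--Banach does not require the range of $j$ to be closed, so even that small worry is unnecessary.
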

\begin{corollary}\label{conv-b-dot}  Let $w$ and $v$ be positive continuous functions on $\R^d$ satisfying \eqref{locally-bounded-cond} and let $f \in (\dot{\mathcal{B}}_{1/w}(\R^d))'$. Then,
\begin{itemize}
\item[$(i)$] $f\ast \varphi \in L^1_v(\R^d)$ for all $\varphi \in \mathcal{D}(\R^d)$.
\item[$(ii)$] For all $\varphi \in \mathcal{D}(\R^d)$ and $h \in C(1/v)_0(\R^d)$ it holds that $\check{\varphi} \ast h \in \dot{\mathcal{B}}_{1/w}(\R^d)$ and
$$
\int_{\R^d}( f \ast \varphi) (x) h(x) \dx = \langle f, \check{\varphi} \ast h \rangle.
$$
\end{itemize}
\end{corollary}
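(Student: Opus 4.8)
The plan is to push everything through the structural representation of Proposition~\ref{convspaces-proposition-structural}. By \eqref{repr-dual-bdot} we may write $f=\sum_{|\alpha|\le n}\partial^\alpha(\mu_\alpha/w)$ in $\mathcal{D}'(\R^d)$, where $\mu_\alpha/w$ denotes the (finite) regular complex Borel measure with density $1/w$ against $\mu_\alpha$. Then $f\ast\varphi=\sum_{|\alpha|\le n}(\mu_\alpha/w)\ast\partial^\alpha\varphi$, so both parts reduce to statements about a single term $(\mu/w)\ast\psi$ with $\mu\in(C_0(\R^d))'$ finite and $\psi\in\mathcal{D}(\R^d)$, for which everything rests on the elementary pointwise identity $((\mu/w)\ast\psi)(x)=\int_{\R^d}\psi(x-t)w(t)^{-1}\,{\rm d}\mu(t)$.

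For $(i)$ I would estimate $\|(\mu/w)\ast\psi\|_{L^1_v}$ by Tonelli: interchanging the $x$- and $t$-integrations gives $\int_{\R^d}w(t)^{-1}\big(\int_{\R^d}|\psi(x-t)|v(x)\,\dx\big)\,{\rm d}|\mu|(t)$, and after the substitution $x=t+y$, since $\operatorname{supp}\psi$ is compact, the inner integral is at most $\|\psi\|_{L^\infty}\int_{\operatorname{supp}\psi}v(t+y)\,\dy$, which by the definition of $g$ in \eqref{locally-bounded-cond} is at most $\|\psi\|_{L^\infty}w(t)\int_{\operatorname{supp}\psi}g(y)\,\dy$. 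Since $g\in L^\infty_{\operatorname{loc}}(\R^d)$ the last integral is finite, the factors $w(t)$ cancel, and we are left with a finite multiple of $|\mu|(\R^d)$; summing over $\alpha$ with $\psi=\partial^\alpha\varphi$ gives $(i)$.

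For the first claim in $(ii)$ I would use the description of $\dot{\mathcal{B}}^m_{1/w}(\R^d)$ as the space of $C^m$ functions $\chi$ with $|\partial^\beta\chi(x)|/w(x)\to 0$ as $|x|\to\infty$ for all $|\beta|\le m$. Since $h$ is continuous, $\check\varphi\ast h\in C^\infty(\R^d)$ with $\partial^\beta(\check\varphi\ast h)=(\partial^\beta\check\varphi)\ast h$; writing $1=v(x-z)/v(x-z)$ inside the convolution integral and using $v(x-z)/w(x)\le g(-z)$, one bounds $|\partial^\beta(\check\varphi\ast h)(x)|/w(x)$ by $\int_{-\operatorname{supp}\varphi}|\partial^\beta\check\varphi(z)|\,g(-z)\,|h(x-z)|/v(x-z)\,\dz$. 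Here $g$ is bounded on the compact set $-\operatorname{supp}\varphi$, and since $h/v$ vanishes at infinity while $z$ ranges over a bounded set, $|h(x-z)|/v(x-z)\to 0$ uniformly in $z$ as $|x|\to\infty$; hence this integral tends to $0$ and is in particular finite, so $\check\varphi\ast h\in\varprojlim_m\dot{\mathcal{B}}^m_{1/w}(\R^d)=\dot{\mathcal{B}}_{1/w}(\R^d)$.

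For the duality identity I would expand both sides via \eqref{repr-dual-bdot}. Setting $\psi_\alpha=\partial^\alpha\varphi$, the left side equals $\sum_{|\alpha|\le n}\int_{\R^d}h(x)\big(\int_{\R^d}\psi_\alpha(x-t)w(t)^{-1}\,{\rm d}\mu_\alpha(t)\big)\,\dx$; the absolute integrability permitting Fubini follows from the estimate of $(i)$ together with the boundedness of $h/v$, and after interchanging, the inner $x$-integral is exactly $(\check\psi_\alpha\ast h)(t)$. The right side is $\sum_{|\alpha|\le n}(-1)^{|\alpha|}\int_{\R^d}w(t)^{-1}\partial^\alpha(\check\varphi\ast h)(t)\,{\rm d}\mu_\alpha(t)$, and since $\partial^\alpha\check\varphi=(-1)^{|\alpha|}\check\psi_\alpha$ the two sign factors cancel, so the two expansions agree term by term. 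I do not expect a genuine obstacle: the corollary is bookkeeping around Proposition~\ref{convspaces-proposition-structural}, and the only points needing care are the two interchanges of integration and the uniform limit in $(ii)$, each handled by the elementary consequence of \eqref{locally-bounded-cond} already exploited in $(i)$.
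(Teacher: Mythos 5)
Your proposal is correct and follows essentially the same route as the paper: both reduce everything to the structural representation \eqref{repr-dual-bdot}, prove $(i)$ by Tonelli together with the bound $v(x+t)\leq g(t)w(x)$ from \eqref{locally-bounded-cond}, establish membership of $\check{\varphi}\ast h$ in $\dot{\mathcal{B}}_{1/w}(\R^d)$ by the same pointwise estimate (the paper's \eqref{bounded-set-later}), and obtain the duality identity by Fubini plus the sign cancellation $\partial^\alpha\check{\varphi}=(-1)^{|\alpha|}\check{(\partial^\alpha\varphi)}$. The only cosmetic quibble is that $g(-z)$ for $z\in-\operatorname{supp}\varphi$ requires $g$ bounded on $\operatorname{supp}\varphi$ rather than on $-\operatorname{supp}\varphi$, which is immaterial since $g$ is locally bounded.
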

\begin{proof}
Assume that $f$ is represented via \eqref{repr-dual-bdot}. In particular,
$$
f\ast \varphi = \sum_{|\alpha| \leq n} \int_{\R^d} \frac{\partial^\alpha\varphi(\: \cdot \: - t)}{w(t)} {\rm d}\mu_\alpha(t), \qquad \varphi \in  \mathcal{D}(\R^d).
$$

$(i)$ Let $\varphi \in \mathcal{D}(\R^d)$ be arbitrary and set $K = \operatorname{supp} \varphi$.  We have that 
$$
\| f\ast \varphi \|_{L^1_v} \leq  \sum_{|\alpha| \leq n} \int_{\R^d} \int_{\R^d} \frac{|\partial^\alpha\varphi(x - t)|}{w(t)} {\rm d}|\mu_\alpha|(t) v(x)\dx,
$$
where $|\mu_\alpha|$ denotes the total variation measure associated with $\mu_\alpha$. For each $|\alpha| \leq n$ it holds that
\begin{align*}
\int_{\R^d} \int_{\R^d} |\partial^\alpha\varphi(x - t)| v(x) \dx \frac{1}{w(t)} {\rm d} |\mu_\alpha|(t) &=  \int_{\R^d} \int_{\R^d} |\partial^\alpha\varphi(x)| v(x + t) \dx \frac{1}{w(t)} {\rm d} |\mu_\alpha|(t) \\
& \leq |K| \|\varphi\|_{K,n} \|g\|_K |\mu_\alpha|(\R^d) < \infty.
\end{align*}
Hence, Fubini's theorem implies that $\| f\ast \varphi \|_{L^1_v} < \infty$.

$(ii)$  Let $\varphi \in \mathcal{D}(\R^d)$ and $h \in C(1/v)_0(\R^d)$ be arbitrary. Set $K = \operatorname{supp} \varphi$. For each $\alpha \in \N^d$ we have that
\begin{equation}
\frac{|\partial^{\alpha}(\check{\varphi} \ast h)(x)|}{w(x)} \leq |K| \|\partial^\alpha \varphi\|_K \|g\|_K \sup_{t \in K} \frac{|h(t+x)|}{v(t+x)},
\label{bounded-set-later}
\end{equation}
which tends to zero as $|x| \to \infty$, that is, $\check{\varphi} \ast h \in \dot{\mathcal{B}}_{1/w}(\R^d)$. Finally,
\begin{align*}
\int_{\R^d} (f \ast \varphi) (x) h(x) \dx &=  \sum_{|\alpha| \leq n} \int_{\R^d} \int_{\R^d} \frac{\partial^\alpha\varphi(x - t)}{w(t)} {\rm d}\mu_\alpha(t)h(x) \dx \\
& =  \sum_{|\alpha| \leq n} \int_{\R^d} \int_{\R^d} \partial^\alpha\varphi(x - t)h(x) \dx  \frac{1}{w(t)} {\rm d}\mu_\alpha(t)  \\
& =  \sum_{|\alpha| \leq n} (-1)^{|\alpha|} \int_{\R^d}  \frac{\partial^\alpha(\check{\varphi} \ast h)(t)}{w(t)} {\rm d}\mu_\alpha(t)  \\
&=  \langle f, \check{\varphi} \ast h \rangle.
\qedhere
\end{align*}
\end{proof}
\begin{proof}[Proof of Theorem \ref{thm-predual}]
From Corollary \ref{conv-b-dot}$(i)$, we obtain that $(\dot{\mathcal{B}}_{\mathcal{W}^\circ}(\R^d))' \subseteq \mathcal{O}'_C(\mathcal{D}, L^1_\mathcal{W})$. We now show that this inclusion holds continuously if we endow the former space with its $bs$-topology. For this, we need the following result from measure theory (cf.\ \cite[Thm.\ 6.9 and Thm.\ 6.13]{Rudin}): Let $w$ be a positive continuous function on $\R^d$ and let $f \in L^1_w(\R^d)$. Denote by $B$ the unit ball in $C_0(\R^d)$. Then,
$$
\|f\|_{L^1_w} = \sup_{h \in B} \left | \int_{\R^d} f(x)h(x) w(x)\dx \right|.
$$ 

Let $B' \subset \mathcal{D}(\R^d)$ bounded and $N \in \N$ be arbitrary. By the above remark, we have that
$$
\sup_{\varphi \in B'} \| f \ast \varphi\|_{L^1_{w_N}} = \sup_{\varphi \in B'} \sup_{h \in B} \left | \int_{\R^d} f\ast \varphi(x) h(x)w_N(x) \dx \right | = \sup_{\varphi \in B'} \sup_{h \in B} \left | \langle f, \check{\varphi} \ast (hw_N) \rangle \right|, 
$$
for all $f \in (\dot{\mathcal{B}}_{\mathcal{W}^\circ}(\R^d))'$, where the last equality follows from Corollary \ref{conv-b-dot}$(ii)$. Choose $\widetilde{N} \geq N$ according to $\eqref{locally-bounded-increasing}$. It is now enough to notice that the set 
$$
\{ \check{\varphi} \ast (hw_N) \, : \, \varphi \in B', h \in B \}
$$
is bounded in $\dot{\mathcal{B}}_{1/w_{\widetilde{N}}}(\R^d)$, as follows from \eqref{bounded-set-later} with $v = w_N $ and $w =w_{\widetilde{N}}$. Next, we show that $\mathcal{O}'_C(\mathcal{D}, L^1_\mathcal{W})$ is continuously included in $(\dot{\mathcal{B}}_{\mathcal{W}^\circ}(\R^d))'_{bs}$. Let $\gamma \in \mathcal{D}(\R^d)$. By \eqref{reconstruction-D-dual} and Proposition \ref{STFT-convolutors}, it suffices to show that the mapping
 $$
V^\ast_\gamma:  L^1_{\mathcal{W}}(\R_x^d) \widehat{\otimes}_\varepsilon \mathcal{P}C(\R_\xi^d) \rightarrow (\dot{\mathcal{B}}_{\mathcal{W}^\circ}(\R^d))'_{bs}
$$
is  well-defined and continuous.  Let $F \in L^1_{\mathcal{W}}(\R_x^d) \widehat{\otimes}_\varepsilon \mathcal{P}C(\R_\xi^d)$ be arbitrary. The linear functional 
$$
f: \dot{\mathcal{B}}_{\mathcal{W}^\circ}(\R^d) \rightarrow \C, \quad \varphi \mapsto \int \int_{\R^{2d}} F(x,\xi) V_{\overline{\gamma}} \varphi(x,-\xi)\dx\dxi
$$
is well-defined and continuous by Lemma \ref{STFT-test-smooth}. Since $V^\ast_\gamma F = f_{| \mathcal{D}(\R^d)}$, we obtain that $V^\ast_\gamma F \in  (\dot{\mathcal{B}}_{\mathcal{W}^\circ}(\R^d))'$ and
$$
\langle V_\gamma^\ast F, \varphi \rangle =  \int \int_{\R^{2d}} F(x,\xi) V_{\overline{\gamma}} \varphi(x,-\xi)\dx\dxi, \qquad \varphi \in \dot{\mathcal{B}}_{\mathcal{W}^\circ}(\R^d),
$$
whence $V^\ast_\gamma$ is well-defined. Finally, we show that it is continuous. Let $N \in \N$ and $B \subset \dot{\mathcal{B}}_{1/w_N}(\R^d)$ bounded be arbitrary. Choose $\widetilde{N} \geq N$ according to \eqref{locally-bounded-increasing}. Lemma \ref{STFT-test-smooth} implies that there is $p \in \overline{P}$ such that $|V_{\overline{\gamma}} \varphi(x,-\xi)| \leq w_{\widetilde{N}}(x) p(\xi)$ for all $(x,\xi) \in \R^{2d}$ and $\varphi \in B$. Set $\widetilde{p} = p(\: \cdot \:) (1+ |\: \cdot \:|)^{d+1} \in \overline{P}$. Hence,
\begin{align*}
\sup_{\varphi \in B} |\langle V^\ast_\gamma F, \varphi \rangle | & \leq \sup_{\varphi \in B} \int \int_{\R^{2d}} |F(x,\xi)| |V_{\overline{\gamma}} \varphi(x,-\xi)| \dx \dxi\\
&\leq \int \int_{\R^{2d}} |F(x,\xi)|w_{\widetilde{N}}(x) p(\xi) \dx \dxi \leq  \|F\|_{\widetilde{p}, L^1_{w_{\widetilde{N}}}}\int_{\R^d} (1+|\xi|)^{-(d+1)}\dxi
\end{align*}
for all $F \in L^1_{\mathcal{W}}(\R_x^d) \widehat{\otimes}_\varepsilon \mathcal{P}C(\R_\xi^d)$.
\end{proof}
From now on, we shall interchangeably use  $\mathcal{O}'_C(\mathcal{D}, L^1_{\mathcal{W}})$ and $(\dot{\mathcal{B}}_{\mathcal{W}^\circ}(\R^d))'_{bs}$ depending on which point of view is most suitable for the given situation. We shall not explicitly refer to Theorem \ref{thm-predual} when we do this.  For later use, we point out the following corollary.
\begin{corollary}\label{desing-dual-b-dot}
Let $\psi \in \mathcal{D}(\R^d) \backslash \{0\}$ and $\gamma \in \mathcal{D}(\R^d)$ be a synthesis window for $\psi$. Let $\mathcal{W} = (w_N)_{N}$ be an increasing weight system satisfying \eqref{locally-bounded-increasing}. Then, the desingularization formula 
$$
\langle f, \varphi \rangle = \frac{1}{(\gamma,\psi)_{L^2}} \int \int_{\R^{2d}} V_\psi f(x,\xi) V_{\overline{\gamma}} \varphi(x,-\xi) \dx \dxi
$$
holds for all $f \in (\dot{\mathcal{B}}_{\mathcal{W}^\circ}(\R^d))'$ and $\varphi \in \dot{\mathcal{B}}_{\mathcal{W}^\circ}$. 
\end{corollary}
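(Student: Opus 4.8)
The plan is to derive the desingularization formula on $\dot{\mathcal{B}}_{\mathcal{W}^\circ}(\R^d)$ by combining the reconstruction formula \eqref{reconstruction-D-dual}, the explicit description \eqref{explicit-ext-adj-STFT} of the extended adjoint STFT obtained in the proof of Theorem \ref{thm-predual}, and a density argument.

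First I would fix $f \in (\dot{\mathcal{B}}_{\mathcal{W}^\circ}(\R^d))'$ and $\varphi \in \dot{\mathcal{B}}_{\mathcal{W}^\circ}(\R^d)$. By Theorem \ref{thm-predual} the functional $f$ belongs to $\mathcal{O}'_C(\mathcal{D},L^1_{\mathcal{W}})$, so Proposition \ref{STFT-convolutors} shows that $V_\psi f \in L^1_{\mathcal{W}}(\R^d_x)\widehat{\otimes}_\varepsilon \mathcal{P}C(\R^d_\xi)$. Consequently the linear functional
$$
g : \dot{\mathcal{B}}_{\mathcal{W}^\circ}(\R^d) \to \C : \varphi \mapsto \frac{1}{(\gamma,\psi)_{L^2}}\int\int_{\R^{2d}} V_\psi f(x,\xi)\, V_{\overline{\gamma}}\varphi(x,-\xi)\dx\dxi
$$
is well defined and continuous: by \eqref{explicit-ext-adj-STFT} it equals $(\gamma,\psi)_{L^2}^{-1}V^\ast_\gamma(V_\psi f)$ viewed as an element of $(\dot{\mathcal{B}}_{\mathcal{W}^\circ}(\R^d))'_{bs}$, and the continuity of $V^\ast_\gamma : L^1_{\mathcal{W}}(\R^d_x)\widehat{\otimes}_\varepsilon \mathcal{P}C(\R^d_\xi) \to (\dot{\mathcal{B}}_{\mathcal{W}^\circ}(\R^d))'_{bs}$ was established there.

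It then remains to check that $f = g$. Since both $f$ and $g$ are continuous on $\dot{\mathcal{B}}_{\mathcal{W}^\circ}(\R^d)$ and $\mathcal{D}(\R^d)$ is dense in this space (each $\dot{\mathcal{B}}_{1/w_N}(\R^d)$ being, by definition, the closure of $\mathcal{D}(\R^d)$ in $\mathcal{B}_{1/w_N}(\R^d)$), it suffices to verify $f = g$ on $\mathcal{D}(\R^d)$. For $\varphi \in \mathcal{D}(\R^d)$ the reconstruction formula \eqref{reconstruction-D-dual} gives $\langle f,\varphi\rangle = (\gamma,\psi)_{L^2}^{-1}\langle V^\ast_\gamma(V_\psi f),\varphi\rangle$, and by the definition of $V^\ast_\gamma$ together with the elementary identity $\overline{V_\gamma\overline{\varphi}}(x,\xi) = V_{\overline{\gamma}}\varphi(x,-\xi)$ we get $\langle V^\ast_\gamma(V_\psi f),\varphi\rangle = \langle V_\psi f, \overline{V_\gamma\overline{\varphi}}\rangle = \int\int_{\R^{2d}} V_\psi f(x,\xi)\, V_{\overline{\gamma}}\varphi(x,-\xi)\dx\dxi$. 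Hence $\langle f,\varphi\rangle = \langle g,\varphi\rangle$ on $\mathcal{D}(\R^d)$, and therefore on all of $\dot{\mathcal{B}}_{\mathcal{W}^\circ}(\R^d)$, which is the assertion.

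The argument is essentially bookkeeping; the one genuine point — and thus the main obstacle — is the compatibility of the two incarnations of $V^\ast_\gamma(V_\psi f)$: the distribution supplied by Proposition \ref{STFT-D-dual} through \eqref{reconstruction-D-dual}, and the explicit continuous functional on $\dot{\mathcal{B}}_{\mathcal{W}^\circ}(\R^d)$ furnished by \eqref{explicit-ext-adj-STFT}. This is settled by noting that the two agree on the dense subspace $\mathcal{D}(\R^d)$, so the whole proof reduces to splicing together results already at hand.
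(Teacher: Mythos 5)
Your proposal is correct and follows exactly the route the paper intends: the corollary is stated without proof immediately after Theorem \ref{thm-predual} precisely because it amounts to applying \eqref{explicit-ext-adj-STFT} with $F = V_\psi f$ (legitimate since $V_\psi f \in L^1_{\mathcal{W}}(\R^d_x)\widehat{\otimes}_\varepsilon \mathcal{P}C(\R^d_\xi)$ by Proposition \ref{STFT-convolutors}), invoking the reconstruction formula \eqref{reconstruction-D-dual} on $\mathcal{D}(\R^d)$, and concluding by density of $\mathcal{D}(\R^d)$ in $\dot{\mathcal{B}}_{\mathcal{W}^\circ}(\R^d)$. The bookkeeping, including the identity $\overline{V_\gamma\overline{\varphi}}(x,\xi) = V_{\overline{\gamma}}\varphi(x,-\xi)$, is accurate.
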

\subsection{Topological properties} We now take a closer look at the locally convex structure of
$\mathcal{O}'_C(\mathcal{D},L^1_{\mathcal{W}})$. Most importantly, we are going to show that $\mathcal{O}'_C(\mathcal{D},L^1_{\mathcal{W}})$ is ultrabornological if and only if $\mathcal{W}^\circ$ satisfies $(\Omega)$. Our main tools will be Theorem \ref{completeness-ind-lim-smooth} and an explicit description of the strong dual of $\mathcal{O}'_C(\mathcal{D},L^1_{\mathcal{W}})$.   The following technical lemma is needed below.
\begin{lemma}\label{density-smooth}
Let $\mathcal{W} = (w_N)_{N}$ be an increasing weight system satisfying \eqref{locally-bounded-increasing}. Then, we have the dense continuous inclusion $\mathcal{D}(\R^d) \hookrightarrow  \mathcal{O}'_C(\mathcal{D},L^1_{\mathcal{W}})$. 
\end{lemma}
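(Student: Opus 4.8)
The statement to prove is that $\mathcal{D}(\R^d)$ is continuously and densely included in $\mathcal{O}'_C(\mathcal{D},L^1_{\mathcal{W}})$. The inclusion itself is easy: for $\varphi \in \mathcal{D}(\R^d)$ and any $\phi \in \mathcal{D}(\R^d)$ we have $\varphi \ast \phi \in \mathcal{D}(\R^d) \subseteq L^1_{\mathcal{W}}(\R^d)$, so $\mathcal{D}(\R^d) \subseteq \mathcal{O}'_C(\mathcal{D},L^1_{\mathcal{W}})$ as sets. Continuity follows from the closed graph theorem (both spaces are webbed, or one argues directly): the bilinear convolution map $\mathcal{D}(\R^d) \times \mathcal{D}(\R^d) \to L^1_{\mathcal{W}}(\R^d)$ is separately continuous, hence the associated linear map $\mathcal{D}(\R^d) \to L_b(\mathcal{D}(\R^d),L^1_{\mathcal{W}}(\R^d))$ has closed graph. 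Alternatively, and more in the spirit of this paper, one can use the STFT description of Proposition~\ref{STFT-convolutors} together with the known mapping properties of $V_\psi$ on $\mathcal{D}(\R^d)$ (Proposition~\ref{STFT-D}) and the continuous inclusion $\mathcal{D}(\R^d_x)\widehat{\otimes}_i \mathcal{S}(\R^d_\xi) \hookrightarrow L^1_{\mathcal{W}}(\R^d_x)\widehat{\otimes}_\varepsilon \mathcal{P}C(\R^d_\xi)$, which holds under \eqref{locally-bounded-increasing} since compactly supported continuous functions lie in every $L^1_{w_N}$ and rapidly decreasing functions in $\xi$ certainly lie in $\mathcal{P}C$.

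\medskip

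The real content is \emph{density}. The plan is to proceed in two steps. First I would show that $\mathcal{D}(\R^d)$ is dense in the predual $\dot{\mathcal{B}}_{\mathcal{W}^\circ}(\R^d)$; this is essentially built into the definition, since each $\dot{\mathcal{B}}_{1/w_N}(\R^d)$ is by construction the closure of $\mathcal{D}(\R^d)$ in $\mathcal{B}_{1/w_N}(\R^d)$, and an inductive limit of spaces each containing a common dense subspace has that subspace dense. Second, I would invoke Theorem~\ref{thm-predual}: since $\mathcal{O}'_C(\mathcal{D},L^1_{\mathcal{W}}) = (\dot{\mathcal{B}}_{\mathcal{W}^\circ}(\R^d))'_{bs}$ topologically, and $\dot{\mathcal{B}}_{\mathcal{W}^\circ}(\R^d)$ is a (Hausdorff) locally convex space, the canonical map $\dot{\mathcal{B}}_{\mathcal{W}^\circ}(\R^d) \to (\dot{\mathcal{B}}_{\mathcal{W}^\circ}(\R^d))''$ into the bidual has dense image when the bidual carries an $\mathfrak{S}$-topology finer than the weak-$\ast$ one, by the bipolar theorem: a functional on $(\dot{\mathcal{B}}_{\mathcal{W}^\circ})'_{bs}$ vanishing on the image of $\dot{\mathcal{B}}_{\mathcal{W}^\circ}$ would be an element of $\dot{\mathcal{B}}_{\mathcal{W}^\circ}$ (since $bs$-continuous functionals are evaluations, $(\dot{\mathcal{B}}_{\mathcal{W}^\circ})'_{bs}$ having the same dual as with the strong topology on bounded sets) that annihilates the whole space, hence is $0$. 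The point is then to identify the image of $\dot{\mathcal{B}}_{\mathcal{W}^\circ}(\R^d)$ in $\mathcal{O}'_C(\mathcal{D},L^1_{\mathcal{W}})$ with (a space containing) $\mathcal{D}(\R^d)$, which follows because $\mathcal{D}(\R^d) \subseteq \dot{\mathcal{B}}_{\mathcal{W}^\circ}(\R^d)$ and the embedding of $\dot{\mathcal{B}}_{\mathcal{W}^\circ}$ into its bidual restricted to $\mathcal{D}(\R^d)$ agrees with the natural inclusion $\mathcal{D}(\R^d) \hookrightarrow \mathcal{O}'_C(\mathcal{D},L^1_{\mathcal{W}})$.

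\medskip

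The cleanest route, and the one I would actually carry out, avoids bidual bookkeeping: I would show that every $f \in \mathcal{O}'_C(\mathcal{D},L^1_{\mathcal{W}})$ is a limit of elements of $\mathcal{D}(\R^d)$ by a \emph{regularization-and-truncation} argument made explicit through the STFT. Fix $\psi \in \mathcal{D}(\R^d)\setminus\{0\}$ with synthesis window $\gamma$. Write $f = (\gamma,\psi)_{L^2}^{-1} V^\ast_\gamma V_\psi f$ via \eqref{reconstruction-D-dual}. Now approximate $V_\psi f \in L^1_{\mathcal{W}}(\R^d_x)\widehat{\otimes}_\varepsilon \mathcal{P}C(\R^d_\xi)$ by functions $F_k$ that are compactly supported in $x$ and Schwartz in $\xi$: one truncates in $x$ using a cutoff sequence (legitimate because $L^1_{w_N}$-norms of $F\mathbf{1}_{|x|>R}$ go to $0$) and mollifies/truncates in $\xi$ (legitimate in $\mathcal{P}C$ because the polynomially-weighted sup-norms control the tails). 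Then $V^\ast_\gamma F_k \to f$ in $\mathcal{O}'_C(\mathcal{D},L^1_{\mathcal{W}})$ by continuity of $V^\ast_\gamma$ (Proposition~\ref{STFT-convolutors}), and each $V^\ast_\gamma F_k$, being the adjoint STFT of an element of $\mathcal{D}(\R^d_x)\widehat{\otimes}_i \mathcal{S}(\R^d_\xi)$, lies in $\mathcal{D}(\R^d)$ by Proposition~\ref{STFT-D}. The main obstacle is verifying that such $F_k$ genuinely converge to $V_\psi f$ in the $\varepsilon$-tensor product topology — i.e., simultaneously controlling the $x$-truncation uniformly in $\xi$ against every $p \in \overline{P}$ and the $\xi$-mollification uniformly in the $L^1_{w_N}$-norm — but this is a routine (if slightly tedious) estimate using that the relevant seminorms are $\|\,\cdot\,\|_{L^1_{w_N},p} = \sup_\xi \|F(\cdot,\xi)\|_{L^1_{w_N}} p(\xi)$ and that $L^1$-functions and rapidly decreasing functions can be approximated by compactly supported, respectively Schwartz-class, functions in the appropriate norms.
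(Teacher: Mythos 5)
Your main argument (the ``cleanest route'' via the STFT) is correct, but it is genuinely different from the proof in the paper. The paper proceeds by classical regularization and truncation on the physical-space side: it first shows that $\mathcal{D}_{L^1_{\mathcal{W}}}(\R^d)$ is dense in $\mathcal{O}'_C(\mathcal{D},L^1_{\mathcal{W}})$ by mollifying, $f_k = f \ast \chi_k \to f$, with a first-order Taylor estimate controlled by \eqref{locally-bounded-increasing}, and then that $\mathcal{D}(\R^d)$ is dense in $\mathcal{D}_{L^1_{\mathcal{W}}}(\R^d)$ by multiplying with cut-offs. Your approach instead truncates $V_\psi f$ in the time-frequency plane and applies $V^\ast_\gamma$, using Propositions \ref{STFT-D} and \ref{STFT-convolutors} together with \eqref{reconstruction-D-dual}. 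This works: a double cut-off $F_{j,k}(x,\xi)=\theta_j(x)\eta_k(\xi)V_\psi f(x,\xi)$ lies in $\mathcal{D}(\R^{2d})\subset \mathcal{D}(\R^d_x)\widehat{\otimes}_i\mathcal{S}(\R^d_\xi)$, so $V^\ast_\gamma F_{j,k}\in\mathcal{D}(\R^d)$, and convergence $F_{j,k}\to V_\psi f$ in $L^1_{\mathcal{W}}\widehat{\otimes}_\varepsilon\mathcal{P}C$ follows from the two facts you identify: the tails in $\xi$ are killed by replacing $p$ with $p(\cdot)(1+|\cdot|)^{d+1}$, and on a compact set of $\xi$ the family $\{V_\psi f(\cdot,\xi)\}$ is compact in $L^1_{w_N}$ (by continuity of $\xi\mapsto V_\psi f(\cdot,\xi)$, established in the proof of Proposition \ref{STFT-convolutors}), so the $x$-truncation converges uniformly there. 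Note that the $\xi$-truncation is not optional: $V_\psi f(x,\cdot)$ only has polynomial growth pointwise (Lemma \ref{well-defined-STFT}), so $\theta_j(x)V_\psi f(x,\xi)$ alone is generally not Schwartz in $\xi$. Your route trades the paper's elementary convolution estimates for the machinery already built in Sections \ref{STFT-distributions} and \ref{L1-convolutors}; it even yields sequential density directly in terms of explicit time-frequency truncations.

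One caution about your first sketched alternative (the bidual/bipolar argument): as stated it is both circular and based on an incorrect identification. The continuous functionals on $(\dot{\mathcal{B}}_{\mathcal{W}^\circ}(\R^d))'_{bs}$ are \emph{not} evaluations at elements of $\dot{\mathcal{B}}_{\mathcal{W}^\circ}(\R^d)$; Proposition \ref{thm-dual-convolutors}$(ii)$ shows the dual is the larger space $\mathcal{B}_{\mathcal{W}^\circ}(\R^d)$, and the surjectivity part of that proposition is itself proved \emph{using} Lemma \ref{density-smooth}. Knowing only that a functional $\Phi$ vanishes on $\mathcal{D}(\R^d)$ tells you that the distribution $\Phi\circ\iota$ is zero, but concluding $\Phi=0$ on all of $\mathcal{O}'_C(\mathcal{D},L^1_{\mathcal{W}})$ requires exactly the density you are trying to prove. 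So that route cannot be used as a substitute; your STFT argument (or the paper's mollification argument) is needed.
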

\begin{proof} Notice that $\mathcal{D}(\R^d) \subset \mathcal{D}_{L^1_{\mathcal{W}}}(\R^d) \subset\mathcal{O}'_C(\mathcal{D},L^1_{\mathcal{W}})$ with continuous inclusion mappings. We shall prove that both inclusion mappings have dense range.  We start by showing that $\mathcal{D}_{L^1_{\mathcal{W}}}(\R^d)$ is dense in $\mathcal{O}'_C(\mathcal{D},L^1_{\mathcal{W}})$. Let $f \in \mathcal{O}'_C(\mathcal{D},L^1_{\mathcal{W}})$ be arbitrary. Choose $\chi \in \mathcal{D}(\R^d)$ with $\int_{\R^d} \chi(x) \dx =1$ and $\operatorname{supp} \chi \subseteq \overline{B}(0,1)$. Set $\chi_k = k^d \chi(k \: \cdot \:)$ and $f_k = f \ast \chi_k \in \mathcal{D}_{L^1_{\mathcal{W}}}(\R^d)$ for $k \in \Z_+$ (cf.\ Lemma \ref{new}). We claim that $f_k \rightarrow f$ in $\mathcal{O}'_C(\mathcal{D},L^1_{\mathcal{W}})$. Let $B \subset \mathcal{D}(\R^d)$ bounded and $N \in \N$  be arbitrary. Choose $\widetilde{N} \geq N$ according to \eqref{locally-bounded-increasing}. 
Hence,
\begin{align*}
&\sup_{\varphi \in B} \| f_k \ast \varphi - f \ast \varphi \|_{L^1_{w_N}} \\
&\leq \sup_{\varphi \in B} \| f \ast \varphi \ast \chi_k - f \ast \varphi \|_{L^1_{w_N}} \\
&\leq \sup_{\varphi \in B} \int_{\R^d} \int_{\R^d} |\chi(t)| | (f\ast \varphi) (x-t/k) - (f \ast \varphi)(x)| \dt w_N(x) \dx \\
&\leq \frac{1}{k}\sup_{\varphi \in B} \int_{\R^d} \int_{\R^d} |\chi(t)| |t| \sum_{|\beta|=1} \int_{0}^1 |\partial^{\beta}(f\ast \varphi)(x-\gamma t/k)| \mathrm{d}\gamma \dt w_N(x) \dx \\
&\leq \frac{1}{k}  \sum_{|\beta|=1} \sup_{\varphi \in B} \int_0^1 \int_{\overline{B}(0,1)} |\chi(t)||t| h_{N, \widetilde{N}}(\gamma t/k) \times \\
&\phantom{\leq} \int_{\R^d}  |(f\ast \partial^{\beta}\varphi)(x-\gamma t/k)| 
w_{\widetilde{N}}(x-\gamma t/k) \dx \dt \mathrm{d}\gamma \\
&\leq \frac{C}{k},
\end{align*}
where 
$
C = \| \chi(t)t\|_{L_1,t} \|h_{N, \widetilde{N}}\|_{\overline{B}(0,1)}\sum_{|\beta| = 1} \sup_{\varphi \in B} \| f \ast \partial^\beta \varphi \|_{L^1_{w_{\widetilde{N}}}}  < \infty.
$ 
Next, we show that $\mathcal{D}(\R^d)$ is dense in $\mathcal{D}_{L^1_{\mathcal{W}}}(\R^d)$. Let $\varphi \in \mathcal{D}_{L^1_{\mathcal{W}}}(\R^d)$ be arbitrary. Choose $\theta \in \mathcal{D}(\R^d)$ with $0 \leq \theta \leq 1$ and $\theta(0) = 1$. Set $\theta_k = \theta(\: \cdot \: /k)$ and $\varphi_k = \theta_k\varphi \in \mathcal{D}(\R^d)$ for $k \in \Z_+$.  We claim that $\varphi_k \rightarrow \varphi$ in $\mathcal{D}_{L^1_{\mathcal{W}}}(\R^d)$. Let $N,n \in \N$ be arbitrary. Hence,
\begin{align*}
\max_{|\alpha| \leq n} \| \partial^\alpha(\varphi-\varphi_k)\|_{L^1_{w_N}} &\leq \max_{|\alpha| \leq n} \| \partial^\alpha\varphi(1-\theta_k)\|_{L^1_{w_N}} \\
&+ \frac{1}{k} \max_{|\alpha| \leq n} \sum_{\beta \leq \alpha, \beta \neq 0} \binom{\alpha}{\beta} \| \partial^{\alpha-\beta}\varphi \partial^{\beta}\theta(\: \cdot \: /k)\|_{L^1_{w_N}}\\
&\leq \max_{|\alpha| \leq n} \| \partial^\alpha\varphi(1-\theta_k)\|_{L^1_{w_N}} + \frac{C}{k},
\end{align*}
where $C = 2^n\max_{|\alpha| \leq n}\|\partial^\alpha\varphi\|_{L^1_{w_N}}\|\theta\|_{\operatorname{supp}\theta,n} < \infty.$ We still need to show that  $\| \partial^\alpha\varphi(1-\theta_k)\|_{L^1_{w_N}} \rightarrow 0$ for all $|\alpha| \leq n$.  Let $\varepsilon > 0$ be arbitrary. Choose $K \Subset \R^d$ so large that
$$\int_{\R^d \backslash K} |\partial^\alpha \varphi (x)| w_N(x) \dx \leq \varepsilon.$$
Then,
\begin{align*}
&\| \partial^\alpha\varphi(1-\theta_k)\|_{L^1_{w_N}}  \\
&= \int_{K} |\partial^\alpha \varphi (x)|(1-\theta_k(x)) w_N(x) \dx  + \int_{\R^d \backslash K} |\partial^\alpha \varphi (x)| (1-\theta_k(x)) w_N(x) \dx  \\
&\leq C' \sup_{x \in K} |1-\theta_k(x)| + \varepsilon,
\end{align*}
where $C' = |K| \|\partial^\alpha \varphi\|_K \|w_N\|_K < \infty.$ The result  follows from the fact that, since $\theta(0) = 1$,  $\theta_k \rightarrow 1$ uniformly on $K$.
\end{proof}
\begin{proposition}\label{thm-dual-convolutors}
Let $\mathcal{W} = (w_N)_{N}$ be an increasing weight system satisfying \eqref{locally-bounded-increasing}.
\begin{itemize}
\item[$(i)$]  The canonical inclusion 
\begin{equation}
\dot{\mathcal{B}}_{\mathcal{W}^\circ}(\R^d) \rightarrow ((\dot{\mathcal{B}}_{\mathcal{W}^\circ}(\R^d))'_{bs})'_b, \quad \varphi \mapsto (f \mapsto \langle f, \varphi \rangle)
\label{canonical-incl}
\end{equation}
is a topological embedding.
\item[$(ii)$]  Let $\psi \in \mathcal{D}(\R^d) \backslash \{0\}$ and $\gamma \in \mathcal{D}(\R^d)$ be a synthesis window for $\psi$. Then, the mapping $\mathcal{B}_{\mathcal{W}^\circ}(\R^d) \rightarrow  ((\dot{\mathcal{B}}_{\mathcal{W}^\circ}(\R^d))'_{bs})'_{b}$ given by

\begin{equation}
\varphi \mapsto \left( f \mapsto \frac{1}{(\gamma,\psi)_{L^2}}\int \int_{\R^{2d}} V_\psi f(x,\xi) V_{\overline{\gamma}}\varphi(x,-\xi) \dx \dxi \right)
\label{dual-convolutors}
\end{equation}
is a continuous bijection whose restriction to $\dot{\mathcal{B}}_{\mathcal{W}^\circ}(\R^d)$ coincides with the canonical inclusion \eqref{canonical-incl}.
\end{itemize}
\end{proposition}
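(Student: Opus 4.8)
The plan is to treat the two parts separately: $(i)$ by soft functional analysis and $(ii)$ by the mapping properties of the STFT established above. Throughout write $E = \dot{\mathcal{B}}_{\mathcal{W}^\circ}(\R^d)$, so that $E'_{bs} = \mathcal{O}'_C(\mathcal{D},L^1_{\mathcal{W}})$ by Theorem \ref{thm-predual}, and $G = L^1_{\mathcal{W}}(\R^d_x)\widehat{\otimes}_\varepsilon \mathcal{P}C(\R^d_\xi)$. For $(i)$ the crucial observation is that every bounded subset $A$ of $E'_{bs}$ is equicontinuous in $E'$: restricting $A$ to a step $\dot{\mathcal{B}}_{1/w_N}(\R^d)$ gives a strongly bounded, hence (by barrelledness of the Fréchet space $\dot{\mathcal{B}}_{1/w_N}(\R^d)$) equicontinuous set, and forming the absolutely convex hull of the associated zero neighbourhoods over $N$ shows that $A^\circ$ is a zero neighbourhood of $E$. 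Granting this, $j$ maps into $(E'_{bs})'$ because point evaluations are $\sigma(E',E)$-continuous and $\sigma(E',E)\subseteq bs(E',E)$; $j$ is continuous because for bounded $A\subseteq E'_{bs}$ the seminorm $\varphi\mapsto\sup_{f\in A}|\langle f,\varphi\rangle|$ is dominated by the Minkowski functional of the zero neighbourhood $A^\circ$; $j$ is injective because $\mathcal{D}(\R^d)\subseteq E'$ separates the points of $E$; and $j$ is a topological embedding because for a closed absolutely convex zero neighbourhood $U$ of $E$ the polar $U^\circ$ is equicontinuous, hence bounded in $E'_{bs}$, so $(U^\circ)^\circ$ is a zero neighbourhood of $(E'_{bs})'_b$ with $j^{-1}\big((U^\circ)^\circ\big)=U$ by the bipolar theorem.

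For $(ii)$, denote by $\Phi$ the map in \eqref{dual-convolutors}. I would factor it as $\Phi = (\gamma,\psi)_{L^2}^{-1}\,{}^{t}V_\psi\circ\iota\circ V_{\overline{\gamma}}$, where $V_{\overline{\gamma}}\colon \mathcal{B}_{\mathcal{W}^\circ}(\R^d)\to(\mathcal{W}^\circ)_{\operatorname{pol}}C(\R^{2d})$ is continuous by Proposition \ref{STFT-test-char-smooth} (note that \eqref{locally-bounded-increasing} for $\mathcal{W}$ is exactly \eqref{locally-bounded-decreasing} for $\mathcal{W}^\circ$), ${}^{t}V_\psi\colon G'_b\to(E'_{bs})'_b$ is the transpose of the continuous map $V_\psi\colon\mathcal{O}'_C(\mathcal{D},L^1_{\mathcal{W}})\to G$ of Proposition \ref{STFT-convolutors}, and $\iota\colon (\mathcal{W}^\circ)_{\operatorname{pol}}C(\R^{2d})\to G'_b$ is $H\mapsto\big(F\mapsto\int\!\int_{\R^{2d}} F(x,\xi)H(x,-\xi)\dx\dxi\big)$. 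The map $\iota$ is well defined and continuous: a set bounded in a step $\varprojlim_n C(1/w_N)\otimes(1+|\cdot|)^n(\R^{2d})$ is uniformly bounded by $w_N(x)p(\xi)$ for some $p\in\overline{P}$, and since $p(1+|\cdot|)^{d+1}\in\overline{P}$ as well, such a set is mapped by $\iota$ into an equicontinuous subset of $G$; as $(\mathcal{W}^\circ)_{\operatorname{pol}}C(\R^{2d})$ is an $(LF)$-space, this boundedness of $\iota$ yields its continuity. The factorization simultaneously shows $\Phi(\varphi)\in(E'_{bs})'$ and that $\Phi$ is continuous into $(E'_{bs})'_b$. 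That $\Phi$ restricts on $\dot{\mathcal{B}}_{\mathcal{W}^\circ}(\R^d)$ to the canonical inclusion \eqref{canonical-incl} is precisely the desingularization formula of Corollary \ref{desing-dual-b-dot}, and injectivity follows since for $\chi\in\mathcal{D}(\R^d)\subseteq\mathcal{E}'(\R^d)$ Corollary \ref{desing-E-dual} gives $\Phi(\varphi)(\chi)=\langle\chi,\varphi\rangle=\int_{\R^d}\chi\varphi$, which determines $\varphi$.

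The substantial point is surjectivity. Given $T\in(E'_{bs})'_b=(\mathcal{O}'_C(\mathcal{D},L^1_{\mathcal{W}}))'$, the dense continuous inclusion $\mathcal{D}(\R^d)\hookrightarrow\mathcal{O}'_C(\mathcal{D},L^1_{\mathcal{W}})$ (Lemma \ref{density-smooth}) yields a distribution $\varphi:=T|_{\mathcal{D}(\R^d)}$. Writing $V_\psi\varphi(x,\xi)=\langle\varphi,\overline{M_\xi T_x\psi}\rangle=T(\overline{M_\xi T_x\psi})$, bounding $|T(g)|\le C\sup_{\chi\in B}\|g\ast\chi\|_{L^1_{w_M}}$ by a continuous seminorm of $\mathcal{O}'_C(\mathcal{D},L^1_{\mathcal{W}})$, and estimating $\|\overline{M_\xi T_x\psi}\ast\chi\|_{L^1_{w_M}}$ by repeated integration by parts in the convolution together with \eqref{locally-bounded-increasing}, one obtains $|V_\psi\varphi(x,\xi)|\le C_k\, w_{\widetilde{M}}(x)(1+|\xi|)^{-k}$ for every $k$, so $V_\psi\varphi$ lies in the step $\varprojlim_n C(1/w_{\widetilde{M}})\otimes(1+|\cdot|)^n(\R^{2d})$ of $(\mathcal{W}^\circ)_{\operatorname{pol}}C(\R^{2d})$. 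The reconstruction formula \eqref{reconstruction-D-dual} gives $\varphi=(\gamma,\psi)_{L^2}^{-1}V^\ast_\gamma V_\psi\varphi$ in $\mathcal{D}'(\R^d)$, and since $V^\ast_\gamma$ maps $(\mathcal{W}^\circ)_{\operatorname{pol}}C(\R^{2d})$ into $\mathcal{B}_{\mathcal{W}^\circ}(\R^d)$ (Proposition \ref{STFT-test-char-smooth}) compatibly with the adjoint STFT on $\mathcal{D}'(\R^d)$, we conclude $\varphi\in\mathcal{B}_{\mathcal{W}^\circ}(\R^d)$. Finally $\Phi(\varphi)=T$, because both are continuous functionals on $\mathcal{O}'_C(\mathcal{D},L^1_{\mathcal{W}})$ agreeing on the dense subspace $\mathcal{D}(\R^d)$: $\Phi(\varphi)(\chi)=\langle\chi,\varphi\rangle=T(\chi)$ by the computation used for injectivity. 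I expect the main difficulty to be this surjectivity step — deducing the quantitative decay of $V_\psi\varphi$ from the mere continuity of $T$ and reconciling the reconstruction formula on $\mathcal{D}'(\R^d)$ with the mapping property of $V^\ast_\gamma$ into $\mathcal{B}_{\mathcal{W}^\circ}(\R^d)$ — together with the (routine but notation-heavy) weighted bookkeeping underlying the continuity of $\iota$.
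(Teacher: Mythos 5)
Your proof is correct and follows essentially the same route as the paper: part $(i)$ via the identification of $bs$-bounded sets with equicontinuous sets (barrelledness) and the bipolar theorem, and part $(ii)$ via the STFT mapping properties for well-definedness and continuity, with surjectivity obtained by restricting the functional to $\mathcal{D}(\R^d)$, bounding the STFT of the restriction, and concluding by reconstruction and density (Lemma \ref{density-smooth}). Your two presentational variants — the factorization ${}^{t}V_\psi\circ\iota\circ V_{\overline{\gamma}}$ in place of the paper's direct estimates on each step, and the use of the original convolutor seminorms plus integration by parts instead of the $bs$-description combined with Lemma \ref{STFT-test-smooth} in the surjectivity step — repackage the same computations.
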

\begin{proof}
$(i)$ The $bs$-topology is coarser than the strong topology and finer than the weak-$\ast$ topology on $(\dot{\mathcal{B}}_{\mathcal{W}^\circ}(\R^d))'$. Since $\dot{\mathcal{B}}_{\mathcal{W}^\circ}(\R^d)$ is barrelled (as an $(LF)$-space), a subset of $(\dot{\mathcal{B}}_{\mathcal{W}^\circ}(\R^d))'$ is equicontinuous if and only if it is bounded in the $bs$-topology, which in turn yields that \eqref{canonical-incl} is a strict morphism.

$(ii)$ We start by showing that, for $\varphi \in \mathcal{B}_{\mathcal{W}^\circ}(\R^d)$ fixed, the linear functional
$$
\mathcal{O}'_C(\mathcal{D},L^1_\mathcal{W}) \rightarrow \C, \quad  f \mapsto \frac{1}{(\gamma,\psi)_{L^2}}\int \int_{\R^{2d}} V_\psi f(x,\xi) V_{\overline{\gamma}}\varphi(x,-\xi) \dx \dxi 
$$
is well-defined and continuous. Proposition \ref{STFT-test-char-smooth} implies that there are $N \in \N$ and $p \in \overline{P}$ such that $|V_{\overline{\gamma}}\varphi(x,-\xi)| \leq w_N(x) p(\xi)$ for all $(x,\xi) \in \R^{2d}$. Set $\widetilde{p} = p(\: \cdot \:) (1+ |\: \cdot \:|)^{d+1} \in \overline{P}$. Then,
\begin{align*}
&\left | \frac{1}{(\gamma,\psi)_{L^2}}\int \int_{\R^{2d}} V_\psi f(x,\xi) V_{\overline{\gamma}}\varphi(x,-\xi) \dx \dxi \right| \\
&\leq \frac{1}{|(\gamma,\psi)_{L^2}|}\int \int_{\R^{2d}} |V_\psi f(x,\xi)| w_N(x) p(\xi) \dx \dxi\\
&\leq \frac{\| V_\psi f \|_{L^1_{w_N, \widetilde{p}} }}{|(\gamma,\psi)_{L^2}|} \int_{\R^d} (1+|\xi|)^{-(d+1)} \dx
\end{align*}
 for all $f \in \mathcal{O}'_C(\mathcal{D},L^1_\mathcal{W})$.
The claim  follows from Proposition \ref{STFT-convolutors}. Next, we prove that \eqref{dual-convolutors} is continuous. It suffices to show that its restriction to $\mathcal{B}_{1/w_N}(\R^d)$ is continuous for each $N \in \N$. Let $B \subset \mathcal{O}'_C(\mathcal{D},L^1_\mathcal{W})$ bounded be arbitrary. Choose $\widetilde{N} \geq N$ according to \eqref{locally-bounded-increasing}. 
By Proposition \ref{STFT-convolutors}, there are $n \in \N$  and $C > 0$ such that
$$
\|V_\psi f(\: \cdot \:, \xi)\|_{L^1_{w_{\widetilde{N}}}} \leq C(1+|\xi|)^{n}, \qquad \xi \in \R^d,
$$
for all $f \in B$. Lemma \ref{STFT-test-smooth} implies that there is $C' > 0$ such that 
$$
|V_{\overline{\gamma}}\varphi(x,-\xi)| \leq  \frac{C'\|\varphi\|_{\mathcal{B}^{n+d+1}_{1/w_N}}w_{\widetilde{N}}(x)}{(1+|\xi|)^{n+d+1}}, \qquad (x,\xi) \in \R^{2d},
 $$
 for all $\varphi \in \mathcal{B}_{1/w_N}(\R^d)$. Hence,
$$
\sup_{f \in B} \left | \frac{1}{(\gamma,\psi)_{L^2}}\int \int_{\R^{2d}} V_\psi f(x,\xi) V_{\overline{\gamma}}\varphi(x,-\xi) \dx \dxi \right |  \leq C''\|\varphi\|_{\mathcal{B}^{n+d+1}_{1/w_N}},
$$
for all $\varphi \in \mathcal{B}_{1/w_N}(\R^d)$, where 
$
C''= \frac{CC'}{|(\gamma,\psi)_{L^2}|} \int_{\R^d} (1+|\xi|)^{-(d+1)} < \infty.
$
Finally, we prove that \eqref{dual-convolutors} is surjective. Let $\Phi \in (\mathcal{O}'_C(\mathcal{D},L^1_{\mathcal{W}}))'$ be arbitrary. Denote by $\iota: \mathcal{D}(\R^d) \rightarrow \mathcal{O}'_C(\mathcal{D},L^1_{\mathcal{W}})$ the inclusion mapping and set $f = \Phi \circ \iota \in \mathcal{D}'(\R^d)$. By \eqref{desing-D-dual}, it holds that
$$
\Phi(\iota(\chi)) = \langle f, \chi \rangle = \frac{1}{(\gamma,\psi)_{L^2}}\int \int_{\R^{2d}} V_\psi \chi(x,\xi) V_{\overline{\gamma}}f(x,-\xi) \dx\dxi
$$
for all $\chi \in \mathcal{D}(\R^d)$. By Lemma \ref{density-smooth}, it therefore suffices to show that $f \in \mathcal{B}_{\mathcal{W}^\circ}(\R^d)$ or, equivalently, that $V_\theta f \in \mathcal{W}^\circ_{\operatorname{pol}}C(\R^{2d})$, where $\theta \in \mathcal{D}(\R^d)$ is a fixed non-zero window function (Proposition \ref{STFT-test-char-smooth}). Since $\Phi$ is continuous, there is $N \in \N$ and a bounded set $B \subset \dot{\mathcal{B}}_{1/w_N}(\R^d)$ such that 
\begin{align*}
|V_\theta f(x,\xi)| =  |\Phi(\iota( \overline{M_\xi T_x\theta}))| \leq  \sup_{\varphi \in B}\left| \int_{\R^d} \varphi(t)  \overline{M_\xi T_x\theta(t)} \dt \right | = \sup_{\varphi \in B} |V_\theta \varphi(x,\xi)|.
\end{align*}
The required bounds for $|V_\theta f|$ therefore directly follow from Lemma \ref{STFT-test-smooth}. The last statement is a reformulation of Corollary \ref{desing-dual-b-dot}. 
\end{proof}
In view of Corollary \ref{proj-desc-smooth-functions} and the fact that $\mathcal{D}(\R^d)$ is bornological, a slight modification of the argument used  in the last part of the proof of Proposition \ref{thm-dual-convolutors}$(ii)$ yields the following corollary.
\begin{corollary}\label{last-cor-ever}
Let $\mathcal{W} = (w_N)_{N}$ be an increasing weight system satisfying \eqref{locally-bounded-increasing} such that $\mathcal{W}^\circ$ satisfies $(\Omega)$. Then, every bounded linear functional on $\mathcal{O}'_C(\mathcal{D}, L^1_\mathcal{W})$ is continuous.
\end{corollary}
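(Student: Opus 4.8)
My plan is to run the argument from the last part of the proof of Proposition~\ref{thm-dual-convolutors}$(ii)$, replacing ``continuous'' by ``bounded'' at the one point where it matters. Let $\Phi$ be a bounded linear functional on $\mathcal{O}'_C(\mathcal{D},L^1_{\mathcal{W}})$. By Lemma~\ref{density-smooth} the canonical inclusion $\iota\colon \mathcal{D}(\R^d)\to\mathcal{O}'_C(\mathcal{D},L^1_{\mathcal{W}})$ is continuous, so $\Phi\circ\iota$ sends bounded subsets of $\mathcal{D}(\R^d)$ to bounded sets of scalars; since $\mathcal{D}(\R^d)$ is bornological, $f:=\Phi\circ\iota$ is therefore continuous, that is $f\in\mathcal{D}'(\R^d)$. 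The desingularization formula \eqref{desing-D-dual} then shows that $\Phi(\iota(\chi))=\langle f,\chi\rangle$ is computed from the STFT of $f$, for all $\chi\in\mathcal{D}(\R^d)$.

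As in Proposition~\ref{thm-dual-convolutors}$(ii)$, it now suffices to prove that $f\in\mathcal{B}_{\mathcal{W}^\circ}(\R^d)$. Indeed, once this is known, $f$ defines through \eqref{dual-convolutors} a \emph{continuous} linear functional $\widetilde{\Phi}$ on $\mathcal{O}'_C(\mathcal{D},L^1_{\mathcal{W}})$ which, by the desingularization formula, agrees with $\Phi$ on the subspace $\iota(\mathcal{D}(\R^d))$. This subspace is dense (Lemma~\ref{density-smooth}), and a closer look at the proof of that lemma shows that the mollification and truncation steps produce \emph{Mackey} convergent sequences in $\mathcal{O}'_C(\mathcal{D},L^1_{\mathcal{W}})$; since a bounded functional carries Mackey-null sequences to null sequences, a (double) limiting argument gives $\Phi=\widetilde{\Phi}$, hence $\Phi$ is continuous.

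To prove $f\in\mathcal{B}_{\mathcal{W}^\circ}(\R^d)$, fix a nonzero $\theta\in\mathcal{D}(\R^d)$; by Proposition~\ref{STFT-test-char-smooth} this is equivalent to $V_\theta f\in\mathcal{W}^\circ_{\operatorname{pol}}C(\R^{2d})$, i.e.\ to the existence of an index $N$ with $\sup_{(x,\xi)}|V_\theta f(x,\xi)|(1+|\xi|)^m/w_N(x)<\infty$ for every $m\in\N$. Writing $V_\theta f(x,\xi)=\langle f,\overline{M_\xi T_x\theta}\rangle=\Phi(\iota(\overline{M_\xi T_x\theta}))$, one wants these estimates to follow from the boundedness of $\Phi$ once suitable bounded subsets of $\mathcal{O}'_C(\mathcal{D},L^1_{\mathcal{W}})$ — built from weighted translates–modulates of the form $w_N(x)^{-1}(1+|\xi|)^m\iota(\overline{M_\xi T_x\theta})$ — have been identified. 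The relevant computations are of the same type as in the proof of Proposition~\ref{STFT-convolutors}, and condition \eqref{locally-bounded-increasing} together with the rapid decay in $\xi$ of $(\overline{M_\xi T_x\theta})\ast\varphi$ (equivalently, the smoothness and compact support of $\theta$) is what drives them.

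The step I expect to be the main obstacle is precisely this last one, namely the genuine passage from ``$\Phi$ continuous'' to ``$\Phi$ bounded''. In Proposition~\ref{thm-dual-convolutors}$(ii)$, continuity of $\Phi$ on $(\dot{\mathcal{B}}_{\mathcal{W}^\circ}(\R^d))'_{bs}$ instantly supplied an estimate $|V_\theta f(x,\xi)|\le\sup_{\varphi\in B}|V_\theta\varphi(x,\xi)|$ with $B$ bounded in a single step $\dot{\mathcal{B}}_{1/w_N}(\R^d)$; no such step-bounded $B$ is available a priori for a merely bounded functional, so one has to produce enough bounded sets of $\mathcal{O}'_C(\mathcal{D},L^1_{\mathcal{W}})$ directly. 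The delicate point is carrying the spatial weight $w_N$ uniformly in $N$, and this is where \eqref{locally-bounded-increasing} is essential. When the weight system is translation invariant (as for the unweighted $\mathcal{D}'_{L^1}(\R^d)$ and $\mathcal{O}'_C(\R^d)$) this is immediate, since translation and modulation leave the pertinent $L^1$-norms invariant; the general weighted case carries the substance of the corollary, and here it may be cleanest to argue through the STFT realization of $\mathcal{O}'_C(\mathcal{D},L^1_{\mathcal{W}})$ as a complemented subspace of $L^1_{\mathcal{W}}(\R^d)\widehat{\otimes}_\varepsilon\mathcal{P}C(\R^d)$, reducing the statement to the fact that bounded functionals on that space are continuous — which reflects that $\mathcal{O}'_C(\mathcal{D},L^1_{\mathcal{W}})$, although not ultrabornological unless $\mathcal{W}^\circ$ satisfies $(\Omega)$, always has this weaker property.
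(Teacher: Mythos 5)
Your overall strategy is the one the paper intends: since $\mathcal{D}(\R^d)$ is bornological and $\iota$ is continuous, $f=\Phi\circ\iota$ is a distribution, and one then reruns the surjectivity part of the proof of Proposition~\ref{thm-dual-convolutors}$(ii)$. Your observation that the approximating sequences in Lemma~\ref{density-smooth} are Mackey convergent (the first with explicit rate $C/k$, the second because null sequences in the Fr\'echet space $\mathcal{D}_{L^1_{\mathcal{W}}}(\R^d)$ are automatically Mackey null, and continuous maps preserve Mackey convergence) is correct and is exactly what is needed to conclude that a bounded functional coincides with the continuous one it agrees with on $\mathcal{D}(\R^d)$.

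However, the step you yourself flag as the main obstacle --- proving $f\in\mathcal{B}_{\mathcal{W}^\circ}(\R^d)$, i.e.\ $\exists N\,\forall n:\ \sup_{x,\xi}|V_\theta f(x,\xi)|(1+|\xi|)^n/w_N(x)<\infty$ --- is left open, and neither of the routes you sketch closes it. The sets $\{w_N(x)^{-1}(1+|\xi|)^m\,\overline{M_\xi T_x\theta}\}$ with a \emph{fixed} $N$ are in general not bounded in $\mathcal{O}'_C(\mathcal{D},L^1_{\mathcal{W}})$: boundedness must be tested against $\sup_{\varphi\in B'}\|\,\cdot\ast\varphi\|_{L^1_{w_M}}$ for \emph{every} $M$, and the estimate $\|\overline{M_\xi T_x\theta}\ast\varphi\|_{L^1_{w_M}}\leq C_{n'}(1+|\xi|)^{-n'}w_{\widetilde M}(x)$ leaves the factor $w_{\widetilde M}(x)/w_N(x)$, which is unbounded for $M>N$ when the weights genuinely increase. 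The bounded sets one can actually manufacture are $\{u(x,\xi)\,\overline{M_\xi T_x\theta}\}$ with $u\in\overline{U}(\mathcal{W}^\circ_{\operatorname{pol}})$ (as verified in the proof of Corollary~\ref{bidual-Omega}); boundedness of $\Phi$ then yields only the projective estimates $\|V_\theta f\|_u<\infty$ for all such $u$, and converting these into membership in the inductive limit is precisely the projective description problem of Proposition~\ref{proj-desc-smooth-functions}, which the paper settles only under $(\Omega)$. Your alternative suggestion --- that bounded functionals on $L^1_{\mathcal{W}}(\R^d)\widehat{\otimes}_\varepsilon\mathcal{P}C(\R^d)$ are continuous --- is asserted without justification and is not obvious ($\varepsilon$-tensor products of bornological spaces need not be bornological). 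In short, the quantifier interchange ($\forall n\,\exists N$ versus $\exists N\,\forall n$) that separates ``bounded'' from ``continuous'' here is exactly the point that remains unproved in your proposal.
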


Next, we point out two situations in which the mapping \eqref{dual-convolutors} is a topological isomorphism.
\begin{corollary}
Let $\mathcal{W} = (w_N)_{N}$ be an increasing weight system satisfying \eqref{locally-bounded-increasing} such that $\mathcal{W}^\circ$ satisfies $(V)$  (cf.\ Remark \ref{V-remark}). Then, the mapping
$$
\mathcal{B}_{\mathcal{W}^\circ}(\R^d) \rightarrow ((\mathcal{B}_{\mathcal{W}^\circ}(\R^d))'_{bs})'_{b}, \quad \varphi \mapsto (f \mapsto \langle f, \varphi \rangle)
$$
is a topological isomorphism.
\end{corollary}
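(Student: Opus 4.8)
The plan is to reduce the statement to Proposition~\ref{thm-dual-convolutors} by exploiting the coincidence $\mathcal{B}_{\mathcal{W}^\circ}(\R^d) = \dot{\mathcal{B}}_{\mathcal{W}^\circ}(\R^d)$ which holds here. First I would note that, by hypothesis, the dual decreasing weight system $\mathcal{W}^\circ = (1/w_N)_N$ satisfies condition $(V)$, so Remark~\ref{V-remark} yields $\mathcal{B}_{\mathcal{W}^\circ}(\R^d) = \dot{\mathcal{B}}_{\mathcal{W}^\circ}(\R^d)$ as locally convex spaces. This identification is compatible with the canonical inclusions of $\mathcal{D}(\R^d)$ into both spaces, hence the duality pairing with $(\dot{\mathcal{B}}_{\mathcal{W}^\circ}(\R^d))'$ is unchanged; in particular $((\mathcal{B}_{\mathcal{W}^\circ}(\R^d))'_{bs})'_b = ((\dot{\mathcal{B}}_{\mathcal{W}^\circ}(\R^d))'_{bs})'_b$ and the mapping in the statement is \emph{literally} the canonical inclusion \eqref{canonical-incl}.

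Next I would invoke Proposition~\ref{thm-dual-convolutors}$(i)$, which guarantees that \eqref{canonical-incl} is a topological embedding, so it only remains to establish surjectivity. For this I would appeal to Proposition~\ref{thm-dual-convolutors}$(ii)$: the mapping \eqref{dual-convolutors} is a continuous bijection from $\mathcal{B}_{\mathcal{W}^\circ}(\R^d)$ onto $((\dot{\mathcal{B}}_{\mathcal{W}^\circ}(\R^d))'_{bs})'_b$ whose restriction to $\dot{\mathcal{B}}_{\mathcal{W}^\circ}(\R^d)$ is exactly \eqref{canonical-incl}. Since now $\dot{\mathcal{B}}_{\mathcal{W}^\circ}(\R^d) = \mathcal{B}_{\mathcal{W}^\circ}(\R^d)$, the map \eqref{dual-convolutors} coincides with \eqref{canonical-incl} on the whole of $\mathcal{B}_{\mathcal{W}^\circ}(\R^d)$; hence \eqref{canonical-incl} is itself a bijection. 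A topological embedding that is bijective is a topological isomorphism, which is the assertion. (One may also record here that, by Theorem~\ref{thm-predual}, $(\mathcal{B}_{\mathcal{W}^\circ}(\R^d))'_{bs} = \mathcal{O}'_C(\mathcal{D},L^1_{\mathcal{W}})$, so the corollary says that $\mathcal{B}_{\mathcal{W}^\circ}(\R^d)$ is the strong dual of $\mathcal{O}'_C(\mathcal{D},L^1_{\mathcal{W}})$.)

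I do not expect any genuine obstacle: all the analytic content — the mapping properties of the STFT (Propositions~\ref{STFT-convolutors}, \ref{STFT-test-char-smooth}), the desingularization formula, and the barrelledness argument underlying the embedding — is already packaged in Proposition~\ref{thm-dual-convolutors}. The only point requiring mild care is verifying that the identification supplied by Remark~\ref{V-remark} is an identification of topological vector spaces carrying the \emph{same} duality pairing, so that ``the canonical inclusion \eqref{canonical-incl}'' and ``the mapping \eqref{dual-convolutors}'' really become one and the same morphism; once this is in place the proof is pure bookkeeping.
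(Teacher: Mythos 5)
Your argument is correct and is precisely the intended one: the paper states this corollary without proof as an immediate consequence of Remark \ref{V-remark} (which gives $\mathcal{B}_{\mathcal{W}^\circ}(\R^d) = \dot{\mathcal{B}}_{\mathcal{W}^\circ}(\R^d)$ under $(V)$) combined with Proposition \ref{thm-dual-convolutors}, exactly as you do. Your care in checking that the identification preserves the duality pairing, so that \eqref{dual-convolutors} and \eqref{canonical-incl} become the same bijective topological embedding, is the only point of substance and you handle it correctly.
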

\begin{corollary}\label{bidual-Omega}
Let $\mathcal{W} = (w_N)_{N}$ be an increasing weight system satisfying \eqref{locally-bounded-increasing} such that $\mathcal{W}^\circ$ satisfies $(\Omega)$. Then, the mapping \eqref{dual-convolutors} is a topological isomorphism.
\end{corollary}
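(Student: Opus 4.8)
The strategy is to promote the continuous bijection $T\colon\mathcal{B}_{\mathcal{W}^\circ}(\R^d)\to E'_b$ supplied by Proposition \ref{thm-dual-convolutors}$(ii)$ — where $E:=\mathcal{O}'_C(\mathcal{D},L^1_{\mathcal{W}})=(\dot{\mathcal{B}}_{\mathcal{W}^\circ}(\R^d))'_{bs}$ by Theorem \ref{thm-predual} — to a topological isomorphism by showing that $T^{-1}$ is continuous, and to do so by testing the canonical seminorms of $\mathcal{B}_{\mathcal{W}^\circ}(\R^d)$ against suitable bounded subsets of $E$ made of Gabor atoms. First I would fix $\psi\in\mathcal{D}(\R^d)\backslash\{0\}$ and a synthesis window $\gamma\in\mathcal{D}(\R^d)$ for $\psi$, and record that condition \eqref{locally-bounded-increasing} for $\mathcal{W}$ is precisely condition \eqref{locally-bounded-decreasing} for $\mathcal{W}^\circ$. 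Since $\mathcal{W}^\circ$ additionally satisfies $(\Omega)$, Proposition \ref{proj-desc-smooth-functions} applies and yields that the topology of $\mathcal{B}_{\mathcal{W}^\circ}(\R^d)$ is generated by the seminorms $\varphi\mapsto\|V_\psi\varphi\|_u$, $u\in\overline{U}(\mathcal{W}^\circ_{\operatorname{pol}})$. So it will be enough to produce, for each such $u$, a set $B_u$ bounded in $E$ satisfying $\|V_\psi\varphi\|_u\le\sup_{f\in B_u}|\langle T\varphi,f\rangle|$ for every $\varphi\in\mathcal{B}_{\mathcal{W}^\circ}(\R^d)$.

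The natural candidate is $B_u:=\{\,u(x,\xi)\,\overline{M_\xi T_x\psi}\;:\;(x,\xi)\in\R^{2d}\,\}\subseteq\mathcal{D}(\R^d)\subseteq E$, and the step I expect to be the main obstacle is the verification that $B_u$ is bounded in $E$. As $E$ carries the initial topology with respect to $f\mapsto(\varphi\mapsto f\ast\varphi)\in L_b(\mathcal{D}(\R^d),L^1_{\mathcal{W}}(\R^d))$, this amounts to the estimate $\sup_{g\in B_u}\sup_{\varphi\in A}\|g\ast\varphi\|_{L^1_{w_N}}<\infty$ for every bounded $A\subseteq\mathcal{D}(\R^d)$, say $A\subseteq\mathcal{D}_{K_A}$ with $K_A\Subset\R^d$, and every $N\in\N$. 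Here I would compute $(u(x_0,\xi_0)\overline{M_{\xi_0}T_{x_0}\psi})\ast\varphi=u(x_0,\xi_0)\,e^{-2\pi i\xi_0 x_0}\,T_{x_0}\bigl((M_{-\xi_0}\overline{\psi})\ast\varphi\bigr)$, whose support lies in the fixed translate $x_0+L$ of the compact set $L:=\operatorname{supp}\psi+K_A$; using \eqref{locally-bounded-increasing} to estimate $w_N(x_0+z)\le\|h_{N,\widetilde{N}}\|_{L}\,w_{\widetilde{N}}(x_0)$ for $z\in L$, and noting that for each $y$ the value $\bigl((M_{-\xi_0}\overline{\psi})\ast\varphi\bigr)(y)$ is the Fourier transform in $s$ of $\overline{\psi}(s)\varphi(y-s)$, a smooth function supported in $\operatorname{supp}\psi$ whose $C^m$-norms are bounded uniformly in $y\in\R^d$ and $\varphi\in A$ — so that $\|(M_{-\xi_0}\overline{\psi})\ast\varphi\|_{L^\infty}$ decays faster than any power of $(1+|\xi_0|)$, uniformly in $\varphi\in A$ — one obtains $\|g\ast\varphi\|_{L^1_{w_N}}\le C\,u(x_0,\xi_0)\,w_{\widetilde{N}}(x_0)\,(1+|\xi_0|)^{-m}$ for every $m\in\N$, with $C$ independent of $x_0,\xi_0$ and $\varphi\in A$. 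Inserting the defining bound $u(x,\xi)w_{\widetilde{N}}(x)\le C_{\widetilde{N}}(1+|\xi|)^{n_{\widetilde{N}}}$ coming from $u\in\overline{U}(\mathcal{W}^\circ_{\operatorname{pol}})$ and choosing $m=n_{\widetilde{N}}$ then gives a finite bound, so $B_u$ is bounded in $E$.

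The remaining ingredient is the identity $\langle T\varphi,\overline{M_{\xi_0}T_{x_0}\psi}\rangle=V_\psi\varphi(x_0,\xi_0)$ for $\varphi\in\mathcal{B}_{\mathcal{W}^\circ}(\R^d)\subseteq\mathcal{E}(\R^d)$ and $(x_0,\xi_0)\in\R^{2d}$: since $\overline{M_{\xi_0}T_{x_0}\psi}\in\mathcal{D}(\R^d)\subseteq\mathcal{E}'(\R^d)$, the right-hand side of \eqref{dual-convolutors}, which defines $\langle T\varphi,\cdot\,\rangle$, coincides by the desingularization formula of Corollary \ref{desing-E-dual} with the ordinary pairing $\langle\overline{M_{\xi_0}T_{x_0}\psi},\varphi\rangle$, and the latter equals $V_\psi\varphi(x_0,\xi_0)$ by the definition of the STFT. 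Hence $\langle T\varphi,u(x_0,\xi_0)\overline{M_{\xi_0}T_{x_0}\psi}\rangle=u(x_0,\xi_0)V_\psi\varphi(x_0,\xi_0)$, and taking moduli and the supremum over $(x_0,\xi_0)$ produces the exact equality $\|V_\psi\varphi\|_u=\sup_{f\in B_u}|\langle T\varphi,f\rangle|$. Since $B_u$ is bounded in $E$, the right-hand side is a continuous seminorm on $E'_b$, and as $u\in\overline{U}(\mathcal{W}^\circ_{\operatorname{pol}})$ was arbitrary, $T^{-1}$ is continuous; together with Proposition \ref{thm-dual-convolutors}$(ii)$ this shows that \eqref{dual-convolutors} is a topological isomorphism.
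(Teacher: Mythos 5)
Your proof is correct and follows essentially the same route as the paper: invoke the projective description of $\mathcal{B}_{\mathcal{W}^\circ}(\R^d)$ from Proposition \ref{proj-desc-smooth-functions}, test against the bounded family of Gabor atoms $\{u(x,\xi)\overline{M_\xi T_x\psi}\}$, and use Corollary \ref{desing-E-dual} to identify $\langle T\varphi, \overline{M_\xi T_x\psi}\rangle$ with $V_\psi\varphi(x,\xi)$. The only difference is that you spell out the verification that this family is bounded in $\mathcal{O}'_C(\mathcal{D},L^1_{\mathcal{W}})$, a step the paper leaves to the reader; your estimate for it is sound.
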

\begin{proof}
We need to show that the mapping \eqref{dual-convolutors} is open. Fix a non-zero window function $\theta \in \mathcal{D}(\R^d)$. By Corollaries  \ref{desing-E-dual} and \ref{proj-desc-smooth-functions}, it suffices to show that for every $u \in \overline{U}( \mathcal{W}^\circ_{\operatorname{pol}})$ there is a set $B \subset \mathcal{E}'(\R^d)$ which is bounded in $(\dot{\mathcal{B}}_{\mathcal{W}^\circ}(\R^d))'_{bs}$ such that
$$
\|V_\theta \varphi \|_{u} \leq \sup_{f \in B} | \langle f, \varphi \rangle|
$$
for all $\varphi \in \mathcal{B}_{\mathcal{W}^\circ}(\R^d)$. Define 
$$
B = \{ u(x,\xi) \overline{M_\xi T_x \theta} \, : \, (x,\xi) \in \R^{2d}\} \subset \mathcal{E}'(\R^d)
$$
and observe that $B$ satisfies all requirements because
$$
\|V_\theta \varphi \|_{u} = \sup_{(x,\xi) \in \R^{2d}} |V_\theta \varphi(x,\xi)|u(x,\xi) = \sup_{(x,\xi) \in \R^{2d}} |\langle \overline{M_\xi T_x \theta}, \varphi \rangle| u(x,\xi) = \sup_{f \in B} |\langle f, \varphi \rangle | 
$$
for all $\varphi \in \mathcal{B}_{\mathcal{W}^\circ}(\R^d)$.
\end{proof}
We believe that the mapping \eqref{dual-convolutors} might  \emph{always} be a topological isomorphism but were  unable to prove this in general. 

We are  ready to  prove the main theorem of this section.  A lcHs is said to be quasibarrelled (sometimes also called infrabarrelled)  if every strongly bounded set in its dual is equicontinuous. Recall that a lcHs $E$ is bornological (ultrabornological, respectively) if and only if it is quasibarrelled and every linear functional on $E$ that is bounded on the bounded subsets of $E$ (on the Banach disks of $E$, respectively), is continuous. 
In particular, every complete bornological lcHs  is  ultrabornological. 

\begin{theorem}\label{char-UB-smooth}
Let $\mathcal{W} = (w_N)_{N}$ be an increasing weight system satisfying \eqref{locally-bounded-increasing}. Then, the following statements are equivalent:
\begin{itemize}
\item[$(i)$] $\mathcal{W}^\circ$ satisfies $(\Omega)$.
\item[$(ii)$] $\mathcal{O}'_C(\mathcal{D}, L^1_\mathcal{W})$ is ultrabornological.
\item[$(iii)$] $(\dot{\mathcal{B}}_{\mathcal{W}^\circ}(\R^d))'_{b}= \mathcal{O}'_C(\mathcal{D}, L^1_\mathcal{W})$.
\end{itemize}
\end{theorem}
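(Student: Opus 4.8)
Throughout, write $E:=\mathcal{O}'_C(\mathcal{D},L^1_{\mathcal{W}})$. The plan is to exploit the identification $E=(\dot{\mathcal{B}}_{\mathcal{W}^\circ}(\R^d))'_{bs}$ (Theorem \ref{thm-predual}), the completeness of $E$ (Corollary \ref{completeness-OC}) and the fact that every bounded linear functional on $E$ is continuous (Corollary \ref{last-cor-ever}), together with the STFT description of the dual of $E$ (Proposition \ref{thm-dual-convolutors}, Corollary \ref{bidual-Omega}). Note first that \eqref{locally-bounded-increasing} for $\mathcal{W}$ is exactly \eqref{locally-bounded-decreasing} for the decreasing weight system $\mathcal{W}^\circ$, so Theorem \ref{completeness-ind-lim-smooth} applies with $\mathcal{V}=\mathcal{W}^\circ$; hence $(i)$ is equivalent to: $\dot{\mathcal{B}}_{\mathcal{W}^\circ}(\R^d)$ (equivalently $\mathcal{B}_{\mathcal{W}^\circ}(\R^d)$) is boundedly retractive. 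I also record a reduction of $(ii)$: since $E$ is complete it is a Banach--Mackey space, so infrabarrelledness and barrelledness coincide for $E$; and, since in addition every bounded functional on $E$ is continuous, an infrabarrelled (hence Mackey) $E$ is bornological, and therefore, being complete, ultrabornological. Thus $(ii)$ holds if and only if $E$ is barrelled. I will prove $(i)\Rightarrow(ii)\Rightarrow(iii)\Rightarrow(i)$ (the implication $(i)\Rightarrow(iii)$ will also be evident).

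$(i)\Rightarrow(ii)$. Assume $\mathcal{W}^\circ$ satisfies $(\Omega)$. Then $\mathcal{B}_{\mathcal{W}^\circ}(\R^d)$ is boundedly retractive by Theorem \ref{completeness-ind-lim-smooth}, hence, by \eqref{implications-regularity}, regular and $\beta$-regular, so that every bounded subset of it is contained and bounded in a single step $\mathcal{B}_{1/w_N}(\R^d)$; moreover, by Corollary \ref{bidual-Omega}, the STFT map \eqref{dual-convolutors} is a topological isomorphism of $\mathcal{B}_{\mathcal{W}^\circ}(\R^d)$ onto $E'_b$. Consequently every strongly bounded subset of $E'$ corresponds, under \eqref{dual-convolutors}, to a bounded subset of some step $\mathcal{B}_{1/w_N}(\R^d)$, and for such a subset the estimates of Lemma \ref{STFT-test-smooth} and of the proof of Proposition \ref{thm-dual-convolutors}$(ii)$ bound $\sup_{\varphi}|\langle f,\varphi\rangle|$ by a continuous seminorm of $E$ of the form $f\mapsto\|V_\psi f\|_{L^1_{w_{\widetilde N}},\widetilde p}$ (Proposition \ref{STFT-convolutors}); that is, such a subset is equicontinuous on $E$. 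Hence $E$ is infrabarrelled, and by the reduction above $E$ is ultrabornological.

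$(ii)\Rightarrow(iii)$. Assume $E$ is ultrabornological, in particular barrelled. Let $B$ be a bounded subset of $\dot{\mathcal{B}}_{\mathcal{W}^\circ}(\R^d)$. Its polar $B^{\circ}$ in $E=(\dot{\mathcal{B}}_{\mathcal{W}^\circ}(\R^d))'$ is a barrel of $E$, hence a $0$-neighbourhood; since the $bs$-topology admits a base of $0$-neighbourhoods consisting of the polars of the sets in $\mathfrak{S}$, there is $A\in\mathfrak{S}$ with $A^{\circ}\subseteq B^{\circ}$, and taking polars back in $\dot{\mathcal{B}}_{\mathcal{W}^\circ}(\R^d)$ yields $B\subseteq B^{\circ\circ}\subseteq A^{\circ\circ}=\overline{\operatorname{acx}(A)}$. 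Thus $\dot{\mathcal{B}}_{\mathcal{W}^\circ}(\R^d)$ is quasiregular, so by the remark following the definition of the $bs$-topology one has $b(\cdot,\cdot)=bs(\cdot,\cdot)$ on $(\dot{\mathcal{B}}_{\mathcal{W}^\circ}(\R^d))'$, and Theorem \ref{thm-predual} then gives $(iii)$.

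$(iii)\Rightarrow(i)$. This is the crux. By $(iii)$ and the remark on quasiregularity, $\dot{\mathcal{B}}_{\mathcal{W}^\circ}(\R^d)$ is quasiregular, and by Theorem \ref{completeness-ind-lim-smooth} it suffices to verify that $\dot{\mathcal{B}}_{\mathcal{W}^\circ}(\R^d)$ satisfies $(wQ)$. I would obtain this along the lines of the implication $(iii)'\Rightarrow(i)$ of Theorem \ref{completeness-ind-lim-smooth}: reduce, via Lemma \ref{suff-Omega} and the isomorphism $\mathcal{D}_{[-1,1]^d}\cong s$, to checking condition $(S_2)^*$ for the pair $(\mathcal{D}_{[-1,1]^d},\mathcal{W}^\circ)$, using that for a fixed $\chi\in\mathcal{D}_{[-1,1]^d}$ the weighted translates of $\chi$ form bounded sets in the steps $\dot{\mathcal{B}}_{1/w_N}(\R^d)$ and that, by quasiregularity, the bounded sets of $\dot{\mathcal{B}}_{\mathcal{W}^\circ}(\R^d)$ are controlled by closed absolutely convex hulls of such sets. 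The delicate point — and the step I expect to require the most work — is to convert the inclusion ``$B\subseteq\overline{\operatorname{acx}(A)}$'' into the quantitative scalar inequality $(S_2)^*$; here one exploits that $\mathcal{D}_{[-1,1]^d}$ is a Montel (indeed Schwartz) space, so that bounded sets are relatively compact and passing to closures is harmless, together with the mapping properties of the STFT on $\dot{\mathcal{B}}_{\mathcal{W}^\circ}(\R^d)$ from Proposition \ref{STFT-test-char-smooth-1}. Once $(wQ)$, and hence $(\Omega)$, is established, the cycle closes.
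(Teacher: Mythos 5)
Your reductions and the implications $(i)\Rightarrow(ii)$ and $(ii)\Rightarrow(iii)$ are correct and in substance identical to the paper's argument: for $(i)\Rightarrow(ii)$ you combine completeness, Corollary \ref{last-cor-ever}, Corollary \ref{bidual-Omega}, the regularity of $\mathcal{B}_{\mathcal{W}^\circ}(\R^d)$ and the STFT estimates exactly as the paper does; for $(ii)\Rightarrow(iii)$ your barrel argument ($B^\circ$ is a $bs$-closed, absorbing, absolutely convex set, hence a $0$-neighbourhood by barrelledness) is a cosmetic variant of the paper's equicontinuity argument via the embedding \eqref{canonical-incl}, and both yield that the $bs$-topology is finer than the strong one.

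The implication $(iii)\Rightarrow(i)$, however, is not a proof: you correctly extract quasiregularity of $\dot{\mathcal{B}}_{\mathcal{W}^\circ}(\R^d)$ from $(iii)$, but the passage from this qualitative information to the quantitative condition $(\Omega)$ is precisely the step you leave open ("the step I expect to require the most work"), and the route you sketch for it is misdirected. Lemma \ref{suff-Omega} together with $(S_2)^*$ for $(\mathcal{D}_{[-1,1]^d},\mathcal{W}^\circ)$ is the tool the paper uses to pass from $(wQ)$ to $(\Omega)$ inside the proof of Theorem \ref{completeness-ind-lim-smooth}; it does not help you \emph{establish} $(wQ)$ from quasiregularity, and the Montel property of $\mathcal{D}_{[-1,1]^d}$ is irrelevant here, since the obstruction is not taking closures inside a fixed step but converting the containment $B\subseteq\overline{\operatorname{acx}(A)}$ (with $A$, $B$ attached to \emph{different} steps) into a uniform two-sided seminorm estimate. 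The missing idea is elementary but essential: one should aim at \emph{$\beta$-regularity} of $\dot{\mathcal{B}}_{\mathcal{W}^\circ}(\R^d)$, which by \eqref{implications-regularity} implies $(wQ)$ and hence $(\Omega)$ via Theorem \ref{completeness-ind-lim-smooth}. Concretely, given $N$ and $B\subset\dot{\mathcal{B}}_{1/w_N}(\R^d)$ bounded in the inductive limit, $(iii)$ yields $M\geq N$ and a bounded $B'\subset\dot{\mathcal{B}}_{1/w_M}(\R^d)$ with $(B')^\circ\subseteq B^\circ$; testing this inclusion against the normalized evaluation functionals
$$
f_{\alpha,x}=\frac{(-1)^{|\alpha|}\partial^\alpha(T_x\delta)}{C\,w_M(x)}\in (B')^\circ\subseteq B^\circ,\qquad |\alpha|\leq n,\ x\in\R^d,
$$
immediately gives $\sup_{\varphi\in B}\|\varphi\|_{1/w_M,n}\leq C$, i.e.\ $B$ is bounded in $\dot{\mathcal{B}}_{1/w_M}(\R^d)$. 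Without this (or an equivalent) device, your cycle of implications does not close.
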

\begin{proof}

$(i) \Rightarrow (ii)$  Since $\mathcal{O}'_C(\mathcal{D}, L^1_\mathcal{W})$ is complete (Corollary \ref{completeness-OC}), we only need to prove that it is bornological. We already pointed out that every bounded linear functional on $\mathcal{O}'_C(\mathcal{D}, L^1_\mathcal{W})$ is continuous (Corollary \ref{last-cor-ever}). We now show that $\mathcal{O}'_C(\mathcal{D}, L^1_\mathcal{W})$ is quasibarrelled. 
Choose $\psi, \gamma \in \mathcal{D}(\R^d)$ with $(\gamma, \psi)_{L^2} = 1$.  By Corollary \ref{bidual-Omega} and Proposition \ref{STFT-convolutors}, it suffices to show that for every bounded set $B \subset \mathcal{B}_{\mathcal{W}^\circ}(\R^d)$ there are $\widetilde{p} \in \overline{P}$, $\widetilde{N} \in \N$, and $C >0$ such that 
$$
\sup_{\varphi \in B} \left |\int \int_{\R^{2d}} V_\psi f(x,\xi) V_{\overline{\gamma}}\varphi(x,-\xi) \dx\dxi  \right | \leq C\| V_\psi f\|_{\widetilde{p}, L^1_{w_{\widetilde{N}}}}
$$
for all $f \in \mathcal{O}'_C(\mathcal{D}, L^1_\mathcal{W})$. Since $\mathcal{B}_{\mathcal{W}^\circ}(\R^d)$ is regular (Theorems \ref{reg-cond} and \ref{completeness-ind-lim-smooth}), there is $N \in \N$ such that $B$ is contained and bounded in $\mathcal{B}_{1/w_N}(\R^d)$. Choose $\widetilde{N} \geq N$ according to  \eqref{locally-bounded-increasing}. Lemma \ref{STFT-test-smooth} implies that there is $p \in \overline{P}$ such that $|V_{\overline{\gamma}}\varphi(x,-\xi)| \leq w_{\widetilde{N}}(x)p(\xi)$ for all $(x,\xi) \in \R^{2d}$ and $\varphi \in B$.  Set $\widetilde{p} = p(\: \cdot \:) (1+ |\: \cdot \:|)^{d+1} \in \overline{P}$. We obtain that
$$
\sup_{\varphi \in B} \left |\int \int_{\R^{2d}} V_\psi f(x,\xi) V_{\overline{\gamma}}\varphi(x,-\xi) \dx\dxi \right | \leq \|V_\psi f\|_{\widetilde{p},L^1_{w_{\widetilde{N}}}} \int_{\R^d}(1+|\xi|)^{-(d+1)}\dxi 
$$
for all $f \in \mathcal{O}'_C(\mathcal{D}, L^1_\mathcal{W})$.

$(ii) \Rightarrow (iii)$ It suffices to show that the $bs$-topology is finer than the strong topology on $(\dot{\mathcal{B}}_{\mathcal{W}^\circ}(\R^d))'$. Let $U \subset  (\dot{\mathcal{B}}_{\mathcal{W}^\circ}(\R^d))'_b$ be an arbitrary neighborhood of $0$. We may assume that $U = B^\circ$, where $B^\circ$ is the polar set of some bounded set $B$ in $\dot{\mathcal{B}}_{\mathcal{W}^\circ}(\R^d)$.  Since \eqref{canonical-incl} is continuous and $(\dot{\mathcal{B}}_{\mathcal{W}^\circ}(\R^d))'_{bs}$ is ultrabornological, there is $N \in \N$ and a bounded set $B' \subset  \dot{\mathcal{B}}_{1/w_N}(\R^d)$ such that $(B')^\circ \subseteq U$.

$(iii) \Rightarrow (i)$  By Theorems \ref{reg-cond} and \ref{completeness-ind-lim-smooth}, it suffices to show that $\dot{\mathcal{B}}_{\mathcal{W}^\circ}(\R^d)$ is $\beta$-regular. Let $N \in \N$ be arbitrary and let $B$ be a subset of $\dot{\mathcal{B}}_{1/w_N}(\R^d)$ that is bounded in $\dot{\mathcal{B}}_{\mathcal{W}^\circ}(\R^d)$. Our assumption implies that there is $M \geq N$ and a bounded set $B' \subset \dot{\mathcal{B}}_{1/w_M}(\R^d)$ such that $(B')^\circ \subseteq B^\circ$, where the polarity is taken twice with respect to the dual system $(\dot{\mathcal{B}}_{\mathcal{W}^\circ}(\R^d),(\dot{\mathcal{B}}_{\mathcal{W}^\circ}(\R^d))')$. We now show that $B$ is bounded in $\dot{\mathcal{B}}_{1/w_M}(\R^d)$. Let $n \in \N$ be arbitrary and choose $C > 0$ such that $\| \varphi \|_{1/w_M,n} \leq C$ for all $\varphi \in B'$. Hence,
$$
f_{\alpha,x} = \frac{(-1)^{|\alpha|} \partial^{\alpha}(T_x \delta)}{Cw_M(x)} \in (B')^\circ \subseteq B^\circ
$$
for all $x \in \R^d$ and $|\alpha| \leq n$.  We obtain that
\begin{align*}
&\sup_{\varphi \in B} \| \varphi\|_{\mathcal{B}^n_{1/w_M}} = C \sup_{\varphi \in B}\max_{|\alpha| \leq n} \sup_{x \in \R^d} |\langle f_{\alpha,x}, \varphi \rangle | \leq C. 
\qedhere
\end{align*}
\end{proof}
\begin{remark}
The implication $(ii) \Rightarrow (iii)$ in Theorem \ref{char-UB-smooth} also follows from De Wilde's open mapping theorem and the fact that the strong dual of an $(LF)$-space is strictly webbed \cite[Prop.\ IV.3.3]{DeWilde-cls}.
\end{remark}
\section{Weighted $\mathcal{D}'_{L^1}$ spaces}\label{sect-weighted-DL1}
 Let $w$ be a positive measurable function and set $\mathcal{D}'_{L^1_w}(\R^d) = (\dot{\mathcal{B}}_{1/w}(\R^d))'_b$. In this short section, we apply Theorem \ref{char-UB-smooth} to study the structural and topological properties of  $\mathcal{D}'_{L^1_w}(\R^d)$.
\begin{theorem}\label{weighted-dual-DL1}
Let $w$ be a positive measurable function and assume that
\begin{equation}
g = \operatorname*{ess \: sup}_{x \in \R^d}\frac{w(x + \: \cdot \:)}{w(x)} \in L^\infty_{\operatorname{loc}}(\R^d).
\label{gen-cond-ofnie}
\end{equation}
Then, the following properties hold:
\begin{itemize}
\item[$(i)$] $\mathcal{D}'_{L^1_w}(\R^d) = \mathcal{O}'_C(\mathcal{D}, L^1_w)$ topologically.
\item[$(ii)$]  The strong dual of $\mathcal{D}'_{L^1_w}(\R^d)$ is given by $\mathcal{B}_{1/w}(\R^d)$. More precisely, let $\psi \in \mathcal{D}(\R^d) \backslash \{0\}$ and $\gamma \in \mathcal{D}(\R^d)$ be a synthesis window for $\psi$. Then, the mapping $\mathcal{B}_{1/w}(\R^d) \rightarrow (\mathcal{D}'_{L^1_w}(\R^d))'_{b}$ given by
$$
\varphi \mapsto \left( f \mapsto \frac{1}{(\gamma,\psi)_{L^2}}\int \int_{\R^{2d}} V_\psi f(x,\xi) V_{\overline{\gamma}}\varphi(x,-\xi) \dx \dxi \right)
$$
is a topological isomorphism.
\item[$(iii)$] $\mathcal{D}'_{L^1_w}(\R^d)$ is ultrabornological.
\end{itemize}
\end{theorem}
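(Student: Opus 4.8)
The plan is to obtain all three assertions by specializing Theorems \ref{thm-predual} and \ref{char-UB-smooth} and Corollary \ref{bidual-Omega} to the constant increasing weight system $\mathcal{W} = (w)_{N \in \N}$, after first reducing the general measurable case to that of a positive continuous weight. So the argument splits into a reduction step, an identification of the relevant constant spectra, and a short invocation of the results already proved.

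\textbf{Step 1 (reduction to continuous weights).} Given a positive measurable $w$ satisfying \eqref{gen-cond-ofnie}, one first notes $w \in L^1_{\operatorname{loc}}(\R^d)$: fixing $x_0$ with $w(x_0) < \infty$, the inequality $w(x_0 + t) \leq g(t)w(x_0)$ (valid, for each fixed $t$, for a.e.\ $x_0$) shows $w$ is essentially bounded on every compact set. Picking $\phi \in \mathcal{D}(\R^d)$ nonnegative with $\int_{\R^d}\phi = 1$ and $\operatorname{supp}\phi \subseteq \overline{B}(0,1)$, I set $\tilde{w} := w \ast \phi \in C^\infty(\R^d)$. Applying \eqref{gen-cond-ofnie} to $w(x - y) = w(x + (-y))$ and to $w(x) = w((x-y) + y)$ produces constants $0 < C_2 \leq C_1$ (one may take $C_1 = \int \phi(y)g(-y)\,\dy$ and $C_2 = \int \phi(y)/g(y)\,\dy$, which is positive because $g$ is locally bounded and finite because $1/g(y) \leq g(-y)$) such that $C_2 w(x) \leq \tilde{w}(x) \leq C_1 w(x)$ for a.e.\ $x$. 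Hence $\tilde{w}$ is a positive continuous weight with $\tilde{w} \asymp w$, and since $\tilde{w}(x + t)/\tilde{w}(x) \leq (C_1/C_2)g(t)$ it too satisfies \eqref{gen-cond-ofnie} with $\operatorname{ess\,sup}$ replaced by $\sup$. As $w$ and $\tilde{w}$ are comparable, the identity yields topological isomorphisms $\dot{\mathcal{B}}_{1/w}(\R^d) \cong \dot{\mathcal{B}}_{1/\tilde{w}}(\R^d)$, $\mathcal{B}_{1/w}(\R^d) \cong \mathcal{B}_{1/\tilde{w}}(\R^d)$ and $L^1_w(\R^d) \cong L^1_{\tilde{w}}(\R^d)$, whence $\mathcal{D}'_{L^1_w}(\R^d) = \mathcal{D}'_{L^1_{\tilde{w}}}(\R^d)$ and $\mathcal{O}'_C(\mathcal{D}, L^1_w) = \mathcal{O}'_C(\mathcal{D}, L^1_{\tilde{w}})$ topologically, while the STFT map in $(ii)$ is unaffected. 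Thus I may assume from now on that $w$ is positive and continuous.

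\textbf{Step 2 (the constant weight system) and conclusion.} The sequence $\mathcal{W} = (w)_{N \in \N}$ is an increasing weight system; condition \eqref{locally-bounded-increasing} for $\mathcal{W}$ coincides with \eqref{gen-cond-ofnie}, and $\mathcal{W}^\circ = (1/w)_{N \in \N}$ trivially satisfies $(\Omega)$ (take $M = N$, $\theta = 1/2$, $C = 1$). Since the defining spectra are constant, $\dot{\mathcal{B}}_{\mathcal{W}^\circ}(\R^d) = \dot{\mathcal{B}}_{1/w}(\R^d)$ and $\mathcal{B}_{\mathcal{W}^\circ}(\R^d) = \mathcal{B}_{1/w}(\R^d)$; moreover, for a Fréchet space $E$ the family $\mathfrak{S}$ from the definition preceding Theorem \ref{thm-predual} consists of all bounded subsets of $E$, so $bs(E', E) = b(E', E)$ and hence $(\dot{\mathcal{B}}_{1/w}(\R^d))'_{bs} = (\dot{\mathcal{B}}_{1/w}(\R^d))'_b = \mathcal{D}'_{L^1_w}(\R^d)$. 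With these identifications, part $(i)$ is exactly Theorem \ref{thm-predual}; part $(ii)$ follows from Corollary \ref{bidual-Omega} together with Proposition \ref{thm-dual-convolutors}$(ii)$, using that $((\dot{\mathcal{B}}_{1/w}(\R^d))'_{bs})'_b = (\mathcal{D}'_{L^1_w}(\R^d))'_b$; and part $(iii)$ follows from the implication $(i)\Rightarrow(ii)$ of Theorem \ref{char-UB-smooth} combined with part $(i)$ above. The only step that is not purely formal is Step 1, the passage from a measurable to a continuous weight, and within it I expect the comparison $\tilde{w} \asymp w$ — in particular the verification that the lower constant $C_2$ is both finite and strictly positive — to require the most care, everything else being bookkeeping about constant inductive spectra and Fréchet preduals.
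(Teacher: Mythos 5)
Your proposal is correct and follows essentially the same route as the paper: reduce to a continuous weight by mollifying ($\widetilde{w}=w\ast\phi$, with the two-sided comparison $\widetilde{w}\asymp w$ derived from \eqref{gen-cond-ofnie}), then apply the results for the constant weight system $\mathcal{W}=(w)_N$, whose dual system trivially satisfies $(\Omega)$ and for which the $bs$- and strong topologies on the dual of the Fr\'echet predual coincide. The only cosmetic differences are your choice of integral constants $C_1,C_2$ in place of the paper's $\|\check{g}\|_K,\|g\|_K^{-1}$, and routing part $(i)$ through Theorem \ref{thm-predual} rather than the equivalence $(i)\Leftrightarrow(iii)$ of Theorem \ref{char-UB-smooth}.
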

\begin{proof}
We may assume that $w$ is continuous. For otherwise, consider the continuous weight $\widetilde w =  w\ast \varphi$, where $\varphi \in \mathcal{D}(\R^d)$ is non-negative and satisfies $\int_{\R^d} \varphi(t) \dt = 1$. Set $K = \operatorname{supp} \varphi$.  Then,
$$
\|g\|_{K}^{-1}w(x) \leq \widetilde{w}(x)  \leq \|\check{g}\|_{K}w(x), \qquad x \in \R^d.
$$
Hence, $\mathcal{B}_{1/w}(\R^d) =\mathcal{B}_{1/\widetilde{w}}(\R^d)$, $\dot{\mathcal{B}}_{1/w}(\R^d) =\dot{\mathcal{B}}_{1/\widetilde{w}}(\R^d)$ and $L^1_w(\R^d) =L^1_{\widetilde{w}}(\R^d)$. Moreover, 
$$
\widetilde{w}(x+t) = \int_{\R^d} w(x+t-y) \varphi(y) \dy \leq g(t) \int_{\R^d} w(x-y) \varphi(y) \dy = g(t) \widetilde{w}(x), \qquad x,t \in \R^d,
$$
whence
$$
\sup_{x \in \R^d}\frac{\widetilde{w}(x + \: \cdot \:)}{\widetilde{w}(x)} \in L^\infty_{\operatorname{loc}}(\R^d).
$$
From now on, we assume that $w$ is continuous. Set $\mathcal{W} = (w)_{N}$, a constant weight system, and notice that $\mathcal{W}^\circ$ satisfies $(\Omega)$. Properties $(i)$--$(iii)$  therefore follow immediately from Corollary \ref{bidual-Omega} and Theorem \ref{char-UB-smooth}.
\end{proof}
\begin{remark}
 When  $w = 1$, Theorem \ref{weighted-dual-DL1} is essentially due to  Schwartz \cite[pp.\ 201-203]{Schwartz} except for the topological identity  $
\mathcal{D}'_{L^1}(\R^d) = \mathcal{O}'_C(\mathcal{D}, L^1)$, which appears to be new even in this case; Schwartz only showed that these spaces coincide as sets 
and have the same bounded sets and null sequences \cite[p.\ 202]{Schwartz}. Structural and topological properties of weighted $\mathcal{D}'_{L^1}$ spaces have recently been studied in the broader context of distribution 
spaces associated to general translation-invariant Banach spaces \cite{D-P-V2015TIB}. In particular, the analogues of Schwartz' results were proved there, but, again, Theorem
\ref{weighted-dual-DL1}$(i)$ seems to be new in this setting.
\end{remark}

\begin{remark}\label{O-regular}
Observe that the function $g$ from \eqref{gen-cond-ofnie} is submultiplicative. Since every locally bounded submultiplicative function is exponentially bounded, the weight $w$ satisfies \eqref{gen-cond-ofnie} if and only if
$$
w(x+y) \leq C w(x)e^{\lambda|y|}, \qquad x, y \in \R^d,
$$
for some $C, \lambda > 0$. Moreover, in dimension one,  \eqref{gen-cond-ofnie} says that $w\circ \log$ is an $O$-regularly varying function \cite[Chap. 2]{B-G-T}. 

It is worth pointing out that  condition $\eqref{gen-cond-ofnie}$ holds if and only if $L_{w}^{1}(\R^{d})$ is translation-invariant and in this case the function $g$ turns out to be the weight function of its translation group (in the sense of \cite[Def.\ 1, p.\ 500]{D-P-V2015TIB}). In fact, the sufficiency of \eqref{gen-cond-ofnie} is easily established. Conversely, suppose that $L^1_{w}(\R^{d})$ is translation-invariant and consider the functions
$$
g (y)= \operatorname*{ess \: sup}_{x \in \R^d}\frac{w(x +y)}{w(x)} \qquad \mbox{and}\qquad \eta(y)=\|T_{y}\|_{L_{b}(L^1_{w})}, \qquad y \in \R^d.
$$
The function $g$ may of course take the value $\infty$, while $\eta\in L_{loc}^{\infty}(\mathbb{R}^d)$ because it is measurable, submultiplicative and everywhere finite (cf. \cite[Thm.\ 7.13.1, p.\ 253]{hille-p}). Clearly $\eta\leq g$. Fix $y\in \mathbb{R}^{d}$ and let $\alpha<g(y)$. We select a compact set $K \subset\{x\in\mathbb{R}^{d}\: : \: \alpha<w(x+y)/w(x)\}$ with positive Lebesgue measure and write $\chi_{K}$ for its characteristic function. Then,
$$
\alpha \int_{K}w(x)\mathrm{d}x\leq \|T_{y}\chi_{K}\|_{L^{1}_{w}}\leq \eta(y)\int_{K}w(x)\mathrm{d}x,
$$
whence, $\alpha\leq\eta(y)$. Since $\alpha$ and $y$ were arbitrary, we conclude that $g=\eta$ and the claim follows.
\end{remark}

\section{Convolutor spaces of Gelfand-Shilov spaces}\label{GS-convolutors}
As a second application of our general theory developed in Section \ref{L1-convolutors}, we discuss the convolutor spaces of general Gelfand-Shilov spaces, as introduced in \cite[Chap.\ II]{G-S}. 

Let $\mathcal{W} = (w_N)_N$ be an increasing weight system. We define\footnote{Gelfand and Shilov only considered the space $\mathcal{B}_{\mathcal{W}}(\R^d)$ for which they used the notation $\mathcal{K}\{M_p\}$, where $M_p = w_p$, $p \in \N$.} the ensuing Fr\'echet spaces
$$
\mathcal{B}_{\mathcal{W}}(\R^d) := \varprojlim_{N \in \N} \mathcal{B}_{w_N}(\R^d), \qquad  \dot{\mathcal{B}}_{\mathcal{W}}(\R^d) := \varprojlim_{N \in \N} \dot{\mathcal{B}}_{w_N}(\R^d).
$$
\begin{remark}\label{remark-P}
If $\mathcal{W} = (w_N)_N$ satisfies condition $(P)$ (cf.\ \cite[p.\ 87]{G-S}), i.e.,
$$
\forall N \in \N \, \exists  M > N: \lim_{|x| \to \infty} w_N(x)/w_M(x) = 0,
$$
then $\mathcal{B}_{\mathcal{W}}(\R^d) = \dot{\mathcal{B}}_{\mathcal{W}}(\R^d)$ and this space is an $(FS)$-space.
\end{remark}
Set  $\check{\mathcal{W}} = (\check{w}_N)_N$. For every $f \in (\dot{\mathcal{B}}_{\check{\mathcal{W}}}(\R^d))'$ the mapping
\begin{equation}
\dot{\mathcal{B}}_{\mathcal{W}}(\R^d) \rightarrow \mathcal{E}(\R^d), \quad \varphi \mapsto f \ast \varphi
\label{cont-conv}
\end{equation}
is well-defined and continuous (cf. Proposition \ref{convspaces-proposition-structural}). We define
$$
\mathcal{O}'_C(\dot{\mathcal{B}}_{\mathcal{W}}) := \{ f \in  (\dot{\mathcal{B}}_{\check{\mathcal{W}}}(\R^d))' \, : \, f \ast \varphi \in \dot{\mathcal{B}}_{\mathcal{W}}(\R^d) \mbox{ for all $\varphi \in \dot{\mathcal{B}}_{\mathcal{W}}(\R^d)$} \}.
$$
The closed graph theorem and the continuity of the mapping \eqref{cont-conv} imply that, for $f \in \mathcal{O}'_C(\dot{\mathcal{B}}_{\mathcal{W}})$ fixed, the mapping $\dot{\mathcal{B}}_{\mathcal{W}}(\R^d) \rightarrow \dot{\mathcal{B}}_{\mathcal{W}}(\R^d), \, \varphi \mapsto f \ast \varphi$ is continuous. We endow $\mathcal{O}'_C(\dot{\mathcal{B}}_{\mathcal{W}})$ with the initial topology with respect to the mapping
$$
\mathcal{O}'_C(\dot{\mathcal{B}}_{\mathcal{W}}) \rightarrow L_{b}(\dot{\mathcal{B}}_{\mathcal{W}}(\R^d),\dot{\mathcal{B}}_{\mathcal{W}}(\R^d)), \quad f \mapsto (\varphi \mapsto f \ast \varphi). 
$$
The goal of this section is to study the structural and topological properties of $\mathcal{O}'_C(\dot{\mathcal{B}}_{\mathcal{W}})$. We shall need to impose the following conditions on $\mathcal{W}$:
\begin{equation}
\forall N \in \N \, \exists M > N : w_N /w_M \in L^1(\R^d)
\label{L1-cond}
\end{equation}
and
\begin{equation}
\forall N \in \N \, \exists M,K \geq N \, \exists C > 0 \, \forall x,y \in \R^d : w_N(x+y) \leq C w_M(x) w_K(y).
\label{trans-inv}
\end{equation}
Clearly, \eqref{trans-inv} implies  \eqref{locally-bounded-increasing}. Clearly, \eqref{trans-inv} implies  \eqref{locally-bounded-increasing}. Moreover, one readily shows that \eqref{L1-cond} and \eqref{locally-bounded-increasing} imply $(P)$ (cf.\ Remark \ref{remark-P}) and thus $\mathcal{B}_{\mathcal{W}}(\R^d) = \dot{\mathcal{B}}_{\mathcal{W}}(\R^d)$. In such a case, we shall simply write $\mathcal{O}'_C(\dot{\mathcal{B}}_{\mathcal{W}}) = \mathcal{O}'_C(\mathcal{B}_{\mathcal{W}})$. It is worth mentioning that Gelfand and Shilov used condition \eqref{L1-cond} for deriving structural theorems for $(\mathcal{B}_{\mathcal{W}}(\R^d))'$ \cite[p.\ 113]{G-S}.

\begin{proposition}\label{eq-L1}
Let $\mathcal{W} =(w_N)_N$ be an increasing weight system satisfying \eqref{L1-cond} and \eqref{trans-inv}. Then, $\mathcal{O}'_C(\mathcal{B}_{\mathcal{W}}) = \mathcal{O}'_C(\mathcal{D}, L^1_{\mathcal{W}})$ topologically.
\end{proposition}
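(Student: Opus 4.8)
The plan is to prove the two continuous inclusions $\mathcal{O}'_C(\dot{\mathcal{B}}_{\mathcal{W}}) \hookrightarrow \mathcal{O}'_C(\mathcal{D},L^1_{\mathcal{W}})$ and $\mathcal{O}'_C(\mathcal{D},L^1_{\mathcal{W}}) \hookrightarrow \mathcal{O}'_C(\dot{\mathcal{B}}_{\mathcal{W}})$ separately, using throughout the identification $\mathcal{O}'_C(\mathcal{D},L^1_{\mathcal{W}}) = (\dot{\mathcal{B}}_{\mathcal{W}^\circ}(\R^d))'_{bs}$ from Theorem \ref{thm-predual} (note $\mathcal{W}$ satisfies \eqref{locally-bounded-increasing} because \eqref{trans-inv} does). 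Two elementary consequences of the hypotheses will be used repeatedly. First, \eqref{L1-cond} together with the estimate $\|\psi\|_{L^1_{w_N}} \leq \|w_N/w_M\|_{L^1}\,\|\psi\|_{w_M,0}$ yields a continuous inclusion $\dot{\mathcal{B}}_{\mathcal{W}}(\R^d) \hookrightarrow L^1_{\mathcal{W}}(\R^d)$. Second, specializing \eqref{trans-inv} to $y=-x$ gives, for every $N$, indices $M,K\geq N$ and $C>0$ with $w_M(x)w_K(-x)\geq w_0(0)/C$ for all $x$; combined with the standard growth estimates this shows $\dot{\mathcal{B}}_{\check{w}_K}(\R^d)\hookrightarrow \dot{\mathcal{B}}_{1/w_M}(\R^d)$ continuously, hence $\dot{\mathcal{B}}_{\check{\mathcal{W}}}(\R^d)\hookrightarrow \dot{\mathcal{B}}_{\mathcal{W}^\circ}(\R^d)$ continuously, and moreover $T_x\check{\varphi}\in\dot{\mathcal{B}}_{\check{\mathcal{W}}}(\R^d)$ whenever $\varphi\in\dot{\mathcal{B}}_{\mathcal{W}}(\R^d)$.

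For the first inclusion, observe that the canonical map $\mathcal{O}'_C(\dot{\mathcal{B}}_{\mathcal{W}})\to L_b(\dot{\mathcal{B}}_{\mathcal{W}}(\R^d),\dot{\mathcal{B}}_{\mathcal{W}}(\R^d))$, followed by the restriction $L_b(\dot{\mathcal{B}}_{\mathcal{W}}(\R^d),\dot{\mathcal{B}}_{\mathcal{W}}(\R^d))\to L_b(\mathcal{D}(\R^d),\dot{\mathcal{B}}_{\mathcal{W}}(\R^d))$ and then post-composition with $\dot{\mathcal{B}}_{\mathcal{W}}(\R^d)\hookrightarrow L^1_{\mathcal{W}}(\R^d)$, equals $f\mapsto(\varphi\mapsto f\ast\varphi)\in L_b(\mathcal{D}(\R^d),L^1_{\mathcal{W}}(\R^d))$; all three maps are continuous, and since $\mathcal{O}'_C(\mathcal{D},L^1_{\mathcal{W}})$ carries the initial topology with respect to the last mapping, the inclusion is continuous. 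It is injective since $f\ast\varphi=0$ for all $\varphi\in\mathcal{D}(\R^d)$ forces $f=0$ in $\mathcal{D}'(\R^d)$.

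For the second, harder inclusion I would argue in three steps. (a) \emph{As sets, $\mathcal{O}'_C(\mathcal{D},L^1_{\mathcal{W}})\subseteq\mathcal{O}'_C(\dot{\mathcal{B}}_{\mathcal{W}})$.} By Theorem \ref{thm-predual}, $f\in(\dot{\mathcal{B}}_{\mathcal{W}^\circ}(\R^d))'$, so by the inclusion above $f$ restricts to a continuous functional on $\dot{\mathcal{B}}_{\check{\mathcal{W}}}(\R^d)$ extending the original distribution; thus $f\ast\varphi(x):=\langle f,T_x\check{\varphi}\rangle$ is a well-defined smooth function for $\varphi\in\dot{\mathcal{B}}_{\mathcal{W}}(\R^d)$. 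To see $f\ast\varphi\in\dot{\mathcal{B}}_{\mathcal{W}}(\R^d)$, fix $N$ and $|\alpha|\leq n$ and set $g_{x,\alpha,\varphi}(t):=w_N(x)(\partial^\alpha\varphi)(x-t)$, so that $|\partial^\alpha(f\ast\varphi)(x)|\,w_N(x)=|\langle f,g_{x,\alpha,\varphi}\rangle|$. Using \eqref{trans-inv} one chooses $M$ (and an auxiliary $M_1$) so that $w_N(x)/w_M(t)\leq C\,w_{M_1}(x-t)$; since $\varphi\in\dot{\mathcal{B}}_{w_{M_1}}(\R^d)$, this makes $\{g_{x,\alpha,\varphi}:x\in\R^d,\ |\alpha|\leq n\}$ a bounded subset of the Banach space $\dot{\mathcal{B}}_{1/w_M}(\R^d)$. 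Representing $f|_{\dot{\mathcal{B}}_{1/w_M}(\R^d)}$ through Proposition \ref{convspaces-proposition-structural} and invoking Lebesgue's dominated convergence theorem (the representing measures being finite, the integrand dominated and vanishing pointwise as $|x|\to\infty$) gives $\langle f,g_{x,\alpha,\varphi}\rangle\to 0$ as $|x|\to\infty$, i.e.\ $f\ast\varphi\in\dot{\mathcal{B}}_{\mathcal{W}}(\R^d)$; hence $f\in\mathcal{O}'_C(\dot{\mathcal{B}}_{\mathcal{W}})$. (b) \emph{Continuity of $\mathrm{id}\colon\mathcal{O}'_C(\mathcal{D},L^1_{\mathcal{W}})\to\mathcal{O}'_C(\dot{\mathcal{B}}_{\mathcal{W}})$.} A basic seminorm on the target is $f\mapsto\sup_{\varphi\in B}\|f\ast\varphi\|_{w_N,n}$ with $B\subseteq\dot{\mathcal{B}}_{\mathcal{W}}(\R^d)$ bounded; by the computation in (a) this equals $\sup\{|\langle f,g\rangle|:g\in\mathcal{G}\}$, where $\mathcal{G}=\{g_{x,\alpha,\varphi}:x\in\R^d,\ |\alpha|\leq n,\ \varphi\in B\}$ is, by the same estimates, bounded in a single $\dot{\mathcal{B}}_{1/w_M}(\R^d)$ and therefore belongs to the family $\mathfrak{S}$ defining the $bs$-topology on $(\dot{\mathcal{B}}_{\mathcal{W}^\circ}(\R^d))'$. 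Consequently this seminorm is continuous on $(\dot{\mathcal{B}}_{\mathcal{W}^\circ}(\R^d))'_{bs}=\mathcal{O}'_C(\mathcal{D},L^1_{\mathcal{W}})$, which is the desired continuity.

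The main obstacle is contained in step (a)–(b): pinning down the indices so that the translated, reflected and differentiated family $\{g_{x,\alpha,\varphi}\}$ is \emph{uniformly} bounded in one fixed Banach space $\dot{\mathcal{B}}_{1/w_M}(\R^d)$ — this is precisely where hypothesis \eqref{trans-inv} (and its $y=-x$ consequence) is indispensable — and then extracting the decay of $f\ast\varphi$ at infinity from the structure of $f$ as a functional on that Banach space via Proposition \ref{convspaces-proposition-structural}. Once these two points are in place, the remaining arguments are routine bookkeeping with initial topologies and weighted sup-norms.
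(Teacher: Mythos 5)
Your proof is correct, but for the hard inclusion it takes a genuinely different route from the paper. The paper introduces the hybrid space $\mathcal{O}'_C(\dot{\mathcal{B}}_{\mathcal{W}}, L^1_{\mathcal{W}})$, shows (by adapting Corollary \ref{conv-b-dot} and the first half of the proof of Theorem \ref{thm-predual}, with \eqref{trans-inv} replacing \eqref{locally-bounded-increasing}) that $(\dot{\mathcal{B}}_{\mathcal{W}^\circ}(\R^d))'_{bs}$ embeds continuously into it, and then upgrades $L^1_{\mathcal{W}}$-valued to $\dot{\mathcal{B}}_{\mathcal{W}}$-valued convolution via the identity $\mathcal{D}_{L^1_{\mathcal{W}}}(\R^d)=\dot{\mathcal{B}}_{\mathcal{W}}(\R^d)$ of Corollary \ref{konnieanders}, which is proved with the STFT and uses \eqref{L1-cond} a second time. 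You instead estimate the sup-norms $\sup_{\varphi\in B}\|f\ast\varphi\|_{w_N,n}$ directly as $\sup_{g\in\mathcal{G}}|\langle f,g\rangle|$, where \eqref{trans-inv} (applied to $x=(x-t)+t$) shows that $\mathcal{G}=\{w_N(x)(\partial^\alpha\varphi)(x-\,\cdot\,)\}$ is bounded in a single step $\dot{\mathcal{B}}_{1/w_M}(\R^d)$; this yields membership in $\mathcal{B}_{\mathcal{W}}(\R^d)$ and $bs$-continuity in one stroke, and the decay at infinity comes from Proposition \ref{convspaces-proposition-structural} plus dominated convergence — the same technique as Corollary \ref{conv-b-dot}, but run against the sup-norms of $\dot{\mathcal{B}}_{\mathcal{W}}(\R^d)$ rather than the $L^1_{\mathcal{W}}$-norms. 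What your approach buys is that it bypasses the STFT-based Corollary \ref{konnieanders} entirely and reveals that \eqref{L1-cond} is only needed for the easy inclusion $\mathcal{O}'_C(\dot{\mathcal{B}}_{\mathcal{W}})\subseteq\mathcal{O}'_C(\mathcal{D},L^1_{\mathcal{W}})$; the paper's detour, on the other hand, recycles machinery already in place and delivers the extra identity $\mathcal{D}_{L^1_{\mathcal{W}}}(\R^d)=\dot{\mathcal{B}}_{\mathcal{W}}(\R^d)$ as a by-product. One small slip: $\dot{\mathcal{B}}_{1/w_M}(\R^d)$ is a Fr\'echet space, not a Banach space (the Banach spaces are the $\dot{\mathcal{B}}^n_{1/w_M}(\R^d)$); since the family $\mathfrak{S}$ consists of sets bounded in some Fr\'echet step $\dot{\mathcal{B}}_{1/w_M}(\R^d)$, and your estimates do give boundedness of $\mathcal{G}$ in every norm $\|\cdot\|_{1/w_M,k}$, the argument is unaffected.
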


We need  some results in preparation for the proof of Proposition \ref{eq-L1}.  Define the following Fr\'echet space of continuous functions

$$
\mathcal{W}_{\operatorname{pol},0}C(\R^{2d}) := \varprojlim_{N \in \N} C(w_N\otimes (1+|\:\cdot\:|)^{N})_0(\R^{2d}).
$$
Lemmas \ref{STFT-test-smooth} and \ref{double-int-test-smooth} together with \eqref{reconstruction-D-dual} immediately imply the next result.

\begin{proposition}\label{STFT-test-char-smooth-proj-1}
Let $\psi \in \mathcal{D}(\R^d)$ and let $\mathcal{W} = (w_N)_N$ be an increasing weight system satisfying \eqref{locally-bounded-increasing}. Then, the mappings 
$$
V_\psi: \dot{\mathcal{B}}_{\mathcal{W}}(\R^d) \rightarrow \mathcal{W}_{\operatorname{pol},0}C(\R_{x,\xi}^{2d}) 
$$ 
and 
$$
V^\ast_\psi:  \mathcal{W}_{\operatorname{pol},0}C(\R_{x,\xi}^{2d})  \rightarrow \dot{\mathcal{B}}_{\mathcal{W}}(\R^d)
$$
are well-defined and continuous. Moreover, if $\psi \neq 0$ and $\gamma \in \mathcal{D}(\R^d)$ is a synthesis window for $\psi$, then
$$
\frac{1}{(\gamma, \psi)_{L^2}} V^\ast_\gamma \circ V_\psi = \operatorname{id}_{\dot{\mathcal{B}}_{\mathcal{W}}(\R^d)}.
$$
\end{proposition}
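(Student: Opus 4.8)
The plan is to deduce the two continuity assertions from the stepwise Lemmas~\ref{STFT-test-smooth} and~\ref{double-int-test-smooth} by expressing source and target as suitable projective limits and matching up the weight indices, and then to obtain the reconstruction identity by combining this with the density of $\mathcal{D}(\R^d)$ in $\dot{\mathcal{B}}_{\mathcal{W}}(\R^d)$ and the reconstruction formula~\eqref{reconstruction-D} already available on $\mathcal{D}(\R^d)$.

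For $V_\psi$ I would use that $\dot{\mathcal{B}}_{\mathcal{W}}(\R^d) = \varprojlim_{N,n} \dot{\mathcal{B}}^n_{w_N}(\R^d)$ and $\mathcal{W}_{\operatorname{pol},0}C(\R^{2d}) = \varprojlim_{N} C(w_N\otimes(1+|\cdot|)^N)_0(\R^{2d})$, so that it suffices to show, for each fixed $N$, that the composition of $V_\psi$ with the canonical projection onto $C(w_N\otimes(1+|\cdot|)^N)_0(\R^{2d})$ is continuous on $\dot{\mathcal{B}}_{\mathcal{W}}(\R^d)$. Choosing $\widetilde N \geq N$ as in~\eqref{locally-bounded-increasing} (so $h_{N,\widetilde N}\in L^\infty_{\operatorname{loc}}(\R^d)$), the increasing property of $\mathcal{W}$ gives a continuous inclusion $\dot{\mathcal{B}}_{\mathcal{W}}(\R^d)\hookrightarrow \dot{\mathcal{B}}^N_{w_{\widetilde N}}(\R^d)$, and Lemma~\ref{STFT-test-smooth} applied with $w=w_{\widetilde N}$, $v=w_N$, $n=N$ makes $V_\psi:\dot{\mathcal{B}}^N_{w_{\widetilde N}}(\R^d)\to C(w_N\otimes(1+|\cdot|)^N)_0(\R^{2d})$ continuous; composing finishes this direction.

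For $V^\ast_\psi$, a fundamental system of seminorms on $\dot{\mathcal{B}}_{\mathcal{W}}(\R^d)$ is $\{\|\cdot\|_{w_N,n}:N,n\in\N\}$, so I would fix $N,n$ and bound $\|V^\ast_\psi F\|_{w_N,n}$. Taking $\widetilde N\geq N$ from~\eqref{locally-bounded-increasing} and $M=\max\{\widetilde N,n+d+1\}$, the inequalities $w_{\widetilde N}\leq w_M$ and $n+d+1\leq M$ yield a continuous inclusion $C(w_M\otimes(1+|\cdot|)^M)_0(\R^{2d})\hookrightarrow C(w_{\widetilde N}\otimes(1+|\cdot|)^{n+d+1})_0(\R^{2d})$; Lemma~\ref{double-int-test-smooth} (with $w=w_{\widetilde N}$, $v=w_N$) then makes $V^\ast_\psi:C(w_{\widetilde N}\otimes(1+|\cdot|)^{n+d+1})_0(\R^{2d})\to\dot{\mathcal{B}}^n_{w_N}(\R^d)$ continuous, and precomposing with the canonical projection $\mathcal{W}_{\operatorname{pol},0}C(\R^{2d})\to C(w_M\otimes(1+|\cdot|)^M)_0(\R^{2d})$ gives the desired continuity.

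For the reconstruction formula I would first record that $\mathcal{D}(\R^d)$ is dense in $\dot{\mathcal{B}}_{\mathcal{W}}(\R^d)$: with $\theta\in\mathcal{D}(\R^d)$ equal to $1$ on $\overline{B}(0,1)$ and $\theta_k=\theta(\cdot/k)$, for $\varphi\in\dot{\mathcal{B}}_{\mathcal{W}}(\R^d)$ the compactly supported functions $\theta_k\varphi$ converge to $\varphi$ in every seminorm $\|\cdot\|_{w_N,n}$, since expanding $\partial^\alpha((1-\theta_k)\varphi)$ by Leibniz the leading term is dominated by $\sup_{|x|\geq k}|\partial^\alpha\varphi(x)|w_N(x)\to 0$ and the remaining terms carry a factor $\partial^\beta\theta_k$ of sup-norm $O(k^{-1})$ times the bounded quantities $|\partial^{\alpha-\beta}\varphi|\,w_N$. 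Then $V^\ast_\gamma\circ V_\psi$ is continuous on $\dot{\mathcal{B}}_{\mathcal{W}}(\R^d)$ by the previous two steps and agrees with $(\gamma,\psi)_{L^2}\operatorname{id}$ on the dense subspace $\mathcal{D}(\R^d)$ by~\eqref{reconstruction-D}, hence everywhere. I do not expect a genuine obstacle here: the statement is essentially a formal consequence of Lemmas~\ref{STFT-test-smooth} and~\ref{double-int-test-smooth}, and the only slightly delicate points are the index bookkeeping just described and the density claim; alternatively, the reconstruction formula can be read off directly from~\eqref{reconstruction-D-dual}, provided one checks via Fubini that the weak-integral definition of $V^\ast_\gamma$ entering Lemma~\ref{double-int-test-smooth} coincides with the distributional one of Proposition~\ref{STFT-D-dual}.
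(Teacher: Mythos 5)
Your proposal is correct and follows essentially the same route as the paper, which states this proposition without further proof as an immediate consequence of Lemmas~\ref{STFT-test-smooth} and~\ref{double-int-test-smooth} together with~\eqref{desing-D-dual}; your index bookkeeping correctly matches the hypotheses of those lemmas. The only (immaterial) divergence is that you obtain the reconstruction identity via density of $\mathcal{D}(\R^d)$ in $\dot{\mathcal{B}}_{\mathcal{W}}(\R^d)$ and~\eqref{reconstruction-D}, whereas the paper invokes~\eqref{desing-D-dual} directly — an alternative you also note.
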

We obtain the following corollary; it is the weighted analogue of a classical result of Schwartz \cite[p.\ 200]{Schwartz}.
\begin{corollary}\label{konnieanders}
Let $\mathcal{W}= (w_N)_N$ be an increasing weight system satisfying \eqref{locally-bounded-increasing}. Then, $\mathcal{D}_{L^1_{\mathcal{W}}}(\R^d) \subseteq \dot{\mathcal{B}}_{\mathcal{W}}(\R^d)$ with continuous inclusion mapping. If, in addition,  $\mathcal{W}$ satisfies \eqref{L1-cond}, then $\mathcal{D}_{L^1_{\mathcal{W}}}(\R^d) = \dot{\mathcal{B}}_{\mathcal{W}}(\R^d) = \mathcal{B}_{\mathcal{W}}(\R^d)$.
\end{corollary}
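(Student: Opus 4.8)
The plan is to establish the unconditional inclusion $\mathcal{D}_{L^1_{\mathcal{W}}}(\R^d) \subseteq \dot{\mathcal{B}}_{\mathcal{W}}(\R^d)$ by a weighted Sobolev embedding, and then, under the extra hypothesis \eqref{L1-cond}, to close a cycle of continuous inclusions.

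First I would record the elementary pointwise estimate: there is $C_d > 0$ such that
\[
|f(x)| \leq C_d \sum_{\beta \in \{0,1\}^d} \int_{x+[0,1]^d} |\partial^\beta f(t)|\, \dt, \qquad x \in \R^d,
\]
for every $f \in C^\infty(\R^d)$. This is obtained by iterating, one coordinate at a time, the one-dimensional identity $g(a) = \int_a^{a+1} g(t)\, \dt - \int_a^{a+1}(a+1-t)\, g'(t)\, \dt$ (integration by parts), the weight $a_i + 1 - t_i$ of each factor being bounded by $1$ on the unit cube. Now let $\varphi \in \mathcal{D}_{L^1_{\mathcal{W}}}(\R^d)$, fix $N \in \N$, and choose $\widetilde{N} \geq N$ as in \eqref{locally-bounded-increasing}, so that $w_N(x) \leq \|h_{N,\widetilde{N}}\|_{[-1,0]^d}\, w_{\widetilde{N}}(t)$ whenever $t \in x + [0,1]^d$. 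Applying the estimate to $f = \partial^\alpha\varphi$ and multiplying by $w_N(x)$ yields
\[
|\partial^\alpha\varphi(x)|\, w_N(x) \leq C \sum_{\beta \in \{0,1\}^d} \int_{x+[0,1]^d} |\partial^{\alpha+\beta}\varphi(t)|\, w_{\widetilde{N}}(t)\, \dt \leq C \max_{|\gamma| \leq |\alpha| + d} \|\partial^\gamma\varphi\|_{L^1_{w_{\widetilde{N}}}},
\]
so that, after taking the supremum over $x$ and the maximum over $|\alpha| \leq n$, $\|\varphi\|_{w_N,n} \leq C' \max_{|\gamma| \leq n + d} \|\partial^\gamma\varphi\|_{L^1_{w_{\widetilde{N}}}}$; this is precisely the continuity of the inclusion $\mathcal{D}_{L^1_{\mathcal{W}}}(\R^d) \hookrightarrow \mathcal{B}_{w_N}(\R^d)$. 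To upgrade to membership in $\dot{\mathcal{B}}_{w_N}(\R^d)$, I would note that each $|\partial^\gamma\varphi|\, w_{\widetilde{N}}$ lies in $L^1(\R^d)$, so by absolute continuity of the Lebesgue integral the quantity $\int_{x+[0,1]^d} |\partial^{\alpha+\beta}\varphi(t)|\, w_{\widetilde{N}}(t)\, \dt$ tends to $0$ as $|x| \to \infty$; hence $|\partial^\alpha\varphi(x)|\, w_N(x) \to 0$ and $\varphi \in \dot{\mathcal{B}}_{w_N}(\R^d)$. Since $N$ is arbitrary, $\varphi \in \dot{\mathcal{B}}_{\mathcal{W}}(\R^d)$ and the inclusion $\mathcal{D}_{L^1_{\mathcal{W}}}(\R^d) \hookrightarrow \dot{\mathcal{B}}_{\mathcal{W}}(\R^d)$ is continuous.

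For the second assertion, the inclusion $\dot{\mathcal{B}}_{\mathcal{W}}(\R^d) \hookrightarrow \mathcal{B}_{\mathcal{W}}(\R^d)$ is immediate from the definitions (termwise, $\dot{\mathcal{B}}_{w_N}(\R^d) \subseteq \mathcal{B}_{w_N}(\R^d)$ with the same norm), so it remains to prove $\mathcal{B}_{\mathcal{W}}(\R^d) \hookrightarrow \mathcal{D}_{L^1_{\mathcal{W}}}(\R^d)$ continuously. Given $\varphi \in \mathcal{B}_{\mathcal{W}}(\R^d)$, a multi-index $\alpha$ and $N \in \N$, choose $M > N$ with $w_N/w_M \in L^1(\R^d)$ by \eqref{L1-cond}; then
\[
\|\partial^\alpha\varphi\|_{L^1_{w_N}} = \int_{\R^d} |\partial^\alpha\varphi(t)|\, w_M(t)\, \frac{w_N(t)}{w_M(t)}\, \dt \leq \|\varphi\|_{w_M,|\alpha|}\, \|w_N/w_M\|_{L^1},
\]
which gives the desired continuous inclusion. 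Combining the three continuous inclusions $\mathcal{D}_{L^1_{\mathcal{W}}}(\R^d) \hookrightarrow \dot{\mathcal{B}}_{\mathcal{W}}(\R^d) \hookrightarrow \mathcal{B}_{\mathcal{W}}(\R^d) \hookrightarrow \mathcal{D}_{L^1_{\mathcal{W}}}(\R^d)$, the three spaces coincide as sets, and each of the identity maps is a continuous bijection whose inverse is the composite of the remaining two, hence a topological isomorphism.

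I do not expect a serious obstacle. The one point that needs a little care is that the Sobolev estimate unavoidably introduces a translation, which forces passing from $w_N$ to the larger weight $w_{\widetilde{N}}$; this index shift is harmless precisely because both $\dot{\mathcal{B}}_{\mathcal{W}}(\R^d)$ and $\mathcal{D}_{L^1_{\mathcal{W}}}(\R^d)$ are projective limits over $N$, and \eqref{locally-bounded-increasing} is exactly the hypothesis that makes the shift quantitatively available.
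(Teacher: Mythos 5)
Your proof is correct, and it takes a genuinely different route from the paper for the main (unconditional) inclusion. The paper deduces $\mathcal{D}_{L^1_{\mathcal{W}}}(\R^d) \hookrightarrow \dot{\mathcal{B}}_{\mathcal{W}}(\R^d)$ from its STFT calculus: by the reconstruction identity $\operatorname{id} = (\gamma,\psi)_{L^2}^{-1} V^\ast_\gamma \circ V_\psi$ on $\dot{\mathcal{B}}_{\mathcal{W}}(\R^d)$ (Proposition \ref{STFT-test-char-smooth-proj-1}), it suffices to show that $V_\psi$ maps $\mathcal{D}_{L^1_{\mathcal{W}}}(\R^d)$ continuously into $\mathcal{W}_{\operatorname{pol},0}C(\R^{2d})$, which the paper only sketches (``by modifying the proof of Lemma \ref{STFT-test-smooth}''). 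You instead prove the inclusion directly via the elementary pointwise Sobolev bound $|f(x)| \leq C_d \sum_{\beta \in \{0,1\}^d}\int_{x+[0,1]^d}|\partial^\beta f|$, absorbing the translation by the unit cube through \eqref{locally-bounded-increasing} exactly as the paper absorbs the translation by $\operatorname{supp}\psi$; the vanishing at infinity comes from the tails of the $L^1$ integrals rather than from the target space $\mathcal{W}_{\operatorname{pol},0}C$. Your argument is self-contained and makes the loss of $d$ derivatives and the index shift $N \to \widetilde{N}$ explicit, at the cost of not reusing the STFT machinery that the paper has already built and exploits elsewhere. The second half --- the reverse continuous inclusion $\mathcal{B}_{\mathcal{W}}(\R^d) \hookrightarrow \mathcal{D}_{L^1_{\mathcal{W}}}(\R^d)$ under \eqref{L1-cond} and the closing of the cycle of inclusions --- is the same in both proofs. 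One cosmetic remark: the decay of $\int_{x+[0,1]^d} g$ for $g \in L^1$ as $|x|\to\infty$ is a tail estimate (choose $R$ with $\int_{|t|>R}|g|<\varepsilon$) rather than ``absolute continuity of the integral,'' but the step itself is of course valid.
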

\begin{proof}
We start by showing that $\mathcal{D}_{L^1_{\mathcal{W}}}(\R^d)$ is continuously included in $\dot{\mathcal{B}}_{\mathcal{W}}(\R^d)$. By Proposition \ref{STFT-test-char-smooth-proj-1}, it suffices to show that
$
V_\psi: \mathcal{D}_{L^1_\mathcal{W}}(\R^d) \rightarrow \mathcal{W}_{\operatorname{pol},0}C(\R^{2d}) 
$ 
is well-defined and continuous. This can be done by modifying the proof of Lemma \ref{STFT-test-smooth}. Next, observe that condition \eqref{L1-cond} implies that $\mathcal{B}_{\mathcal{W}}(\R^d)$ is continuously included in $ \mathcal{D}_{L^1_\mathcal{W}}(\R^d)$, whence the second part follows from the first one.
\end{proof}
\begin{proof}[Proof of Proposition \ref{eq-L1}]
The continuous inclusion $\mathcal{O}'_C({\mathcal{B}}_{\mathcal{W}}) \subseteq \mathcal{O}'_C(\mathcal{D}, L^1_{\mathcal{W}})$ is clear from \eqref{L1-cond}. For the converse inclusion, we introduce the ensuing space
$$
\mathcal{O}'_C({\mathcal{B}}_{\mathcal{W}}, L^1_{\mathcal{W}}) := \{ f \in  ({\mathcal{B}}_{\check{\mathcal{W}}}(\R^d))' \, : \, f \ast \varphi \in L^1_{\mathcal{W}}(\R^d) \mbox{ for all $\varphi \in {\mathcal{B}}_{\mathcal{W}}(\R^d)$} \}
$$
endowed with the initial topology with respect to the mapping 
$$
\mathcal{O}'_C({\mathcal{B}}_{\mathcal{W}},L^1_{\mathcal{W}}) \rightarrow L_b({\mathcal{B}}_{\mathcal{W}}(\R^d),L^1_{\mathcal{W}}(\R^d)), \quad f \mapsto (\varphi \mapsto f \ast \varphi).
$$
Similarly as in Corollary \ref{conv-b-dot} and the first part of the proof of Theorem \ref{thm-predual}, but by using \eqref{trans-inv} instead of \eqref{locally-bounded-increasing}, one can show that $({\mathcal{B}}_{\mathcal{W}^\circ}(\R^d))'_{bs}$ is continuously included in $\mathcal{O}'_C({\mathcal{B}}_{\mathcal{W}}, L^1_{\mathcal{W}})$. Next, we notice that an element $f \in  ({\mathcal{B}}_{\check{\mathcal{W}}}(\R^d))'$ belongs to $\mathcal{O}'_C({\mathcal{B}}_{\mathcal{W}},L^1_{\mathcal{W}})$ if and only if $f \ast \varphi \in \mathcal{D}_{L^1_{\mathcal{W}}}(\R^d)$ for all $\varphi \in {\mathcal{B}}_{\mathcal{W}}(\R^d)$ and that the topology of $\mathcal{O}'_C({\mathcal{B}}_{\mathcal{W}},L^1_{\mathcal{W}})$ coincides with the initial topology with respect to the mapping 
$$
\mathcal{O}'_C({\mathcal{B}}_{\mathcal{W}},L^1_{\mathcal{W}}) \rightarrow L_b({\mathcal{B}}_{\mathcal{W}}(\R^d),\mathcal{D}_{L^1_{\mathcal{W}}}(\R^d)), \quad f \mapsto (\varphi \mapsto f \ast \varphi).
$$
Hence, the result follows from Corollary \ref{konnieanders}.
\end{proof}
Proposition \ref{eq-L1} and Theorem \ref{char-UB-smooth}  imply the following  result. 
\begin{theorem}\label{char-UB-smooth-GS}
Let $\mathcal{W} = (w_N)_{N}$ be an increasing weight system satisfying \eqref{L1-cond} and \eqref{trans-inv}. Then, the following statements are equivalent:
\begin{itemize}
\item[$(i)$] $\mathcal{W}^\circ$ satisfies $(\Omega)$.
\item[$(ii)$] $\mathcal{O}'_C({\mathcal{B}}_{\mathcal{W}})$ is ultrabornological.
\item[$(iii)$] $({\mathcal{B}}_{\mathcal{W}^\circ}(\R^d))'_{b}= \mathcal{O}'_C({\mathcal{B}}_{\mathcal{W}})$.
\end{itemize}
\end{theorem}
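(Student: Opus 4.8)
The plan is to obtain Theorem \ref{char-UB-smooth-GS} as an immediate corollary of Theorem \ref{char-UB-smooth}, transporting all three statements along the topological identity provided by Proposition \ref{eq-L1}. First I would record that the hypotheses match up: as noted right after \eqref{trans-inv}, condition \eqref{trans-inv} implies \eqref{locally-bounded-increasing}, so any increasing weight system $\mathcal{W}$ satisfying \eqref{L1-cond} and \eqref{trans-inv} automatically satisfies \eqref{locally-bounded-increasing}; in particular both Proposition \ref{eq-L1} (which requires exactly \eqref{L1-cond} and \eqref{trans-inv}) and Theorem \ref{char-UB-smooth} (which requires \eqref{locally-bounded-increasing}) are applicable to such $\mathcal{W}$.

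Next, Proposition \ref{eq-L1} yields $\mathcal{O}'_C(\dot{\mathcal{B}}_{\mathcal{W}}) = \mathcal{O}'_C(\mathcal{D}, L^1_{\mathcal{W}})$ as topological vector spaces. Since being ultrabornological is a purely topological property, statement $(ii)$ of the present theorem is equivalent to $\mathcal{O}'_C(\mathcal{D}, L^1_\mathcal{W})$ being ultrabornological, which is statement $(ii)$ of Theorem \ref{char-UB-smooth}. Moreover, under the identification $\mathcal{O}'_C(\dot{\mathcal{B}}_{\mathcal{W}}) = \mathcal{O}'_C(\mathcal{D}, L^1_{\mathcal{W}})$, statement $(iii)$ of the present theorem is literally statement $(iii)$ of Theorem \ref{char-UB-smooth}, as it asserts the coincidence of $(\dot{\mathcal{B}}_{\mathcal{W}^\circ}(\R^d))'_b$ with the very same space. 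Consequently the chain of equivalences $(i) \Leftrightarrow (ii) \Leftrightarrow (iii)$ furnished by Theorem \ref{char-UB-smooth} transfers verbatim, which completes the argument.

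There is no genuine obstacle in this final step; all the substantive work has already been carried out elsewhere --- the topological structure theory of $\mathcal{O}'_C(\mathcal{D}, L^1_{\mathcal{W}})$ in Section \ref{L1-convolutors} (in particular Theorem \ref{char-UB-smooth}, resting on Corollary \ref{bidual-Omega}, Theorem \ref{completeness-ind-lim-smooth}, and the STFT machinery) together with the identification of the two convolutor spaces in Proposition \ref{eq-L1} (resting on Corollary \ref{konnieanders} and on condition \eqref{trans-inv}). The only point demanding the slightest care is the bookkeeping of which hypothesis is needed where, which is handled by the implication \eqref{trans-inv} $\Rightarrow$ \eqref{locally-bounded-increasing} noted above.
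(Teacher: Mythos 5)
Your proposal is correct and coincides with the paper's own argument, which derives Theorem \ref{char-UB-smooth-GS} directly from Proposition \ref{eq-L1} and Theorem \ref{char-UB-smooth}, using the observation (stated right after \eqref{trans-inv}) that \eqref{trans-inv} implies \eqref{locally-bounded-increasing}. Nothing further is needed.
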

Finally, we apply Theorem \ref{char-UB-smooth-GS} to study the convolutor spaces of several Gelfand-Shilov spaces that are frequently used in the literature. To this end, we evaluate conditions \eqref{L1-cond}, \eqref{trans-inv} and $(\Omega)$ for two classes of increasing weight systems. Namely, let $w$ be a positive continuous increasing function on $[0,\infty)$ and extend $w$ to $\R^d$ by setting $w(x) = w(|x|)$ for $x \in \R^d$. We define the following increasing weight systems:
$\mathcal{W}_{w} = (e^{Nw})_N$ and $\widetilde{\mathcal{W}}_{w} = (e^{w(N\:\cdot\:)})_N$.
The function $w$ is said to satisfy $(\alpha)$ (cf.\ \cite{B-M-T}) if there are $C'_0, H' \geq 1$ such that 
$$
w(2t) \leq H'w(t) + \log C'_0, \qquad t \geq 0. 
$$
The proof of the next result is simple and therefore omitted.
\begin{proposition}\label{char-omega-1}
Let $w$ be a positive continuous increasing function on $[0,\infty)$ with $\log(1 + t) = O(w(t))$ satisfying $(\alpha)$. Then, $\mathcal{W}_{w}$ satisfies \eqref{L1-cond} and  \eqref{trans-inv}, while $\mathcal{W}^\circ_{w}$ satisfies $(\Omega)$. Consequently, $\mathcal{O}'_C({\mathcal{B}}_{\mathcal{W}_{w}})$ is ultrabornological.
\end{proposition}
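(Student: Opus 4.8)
The plan is to verify conditions \eqref{L1-cond} and \eqref{trans-inv} for the increasing weight system $\mathcal{W}_{w} = (e^{Nw})_N$ and condition $(\Omega)$ for its dual system $\mathcal{W}^\circ_{w} = (e^{-Nw})_N$ directly from the hypotheses, and then to read off the ultrabornologicity of $\mathcal{O}'_C(\dot{\mathcal{B}}_{\mathcal{W}_w})$ from Theorem \ref{char-UB-smooth-GS}. Since $w$ is continuous and increasing, every quotient of two of these weights is bounded on compact sets, so throughout only the behaviour at infinity will matter.

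For \eqref{L1-cond}: given $N \in \N$, we have $w_N/w_M = e^{-(M-N)w}$, and the growth hypothesis $\log(1+t) = O(w(t))$ supplies $c>0$ and $t_0 \geq 0$ with $w(t) \geq c\log(1+t)$ for $t \geq t_0$; hence $e^{-(M-N)w(x)} \leq (1+|x|)^{-c(M-N)}$ for $|x| \geq t_0$, and choosing $M > N + d/c$ makes $w_N/w_M$ integrable over $\R^d$. For \eqref{trans-inv}: given $N$, I would use $|x+y| \leq 2\max\{|x|,|y|\}$, monotonicity of $w$, and property $(\alpha)$ to obtain
$$
w(|x+y|) \leq w\bigl(2\max\{|x|,|y|\}\bigr) \leq H'\,w\bigl(\max\{|x|,|y|\}\bigr) + \log C'_0 \leq H'\bigl(w(|x|)+w(|y|)\bigr) + \log C'_0,
$$
where in the last step $w \geq 0$ is used. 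Multiplying by $N$ and exponentiating gives $w_N(x+y) \leq (C'_0)^N\, e^{NH'w(|x|)}\,e^{NH'w(|y|)} \leq (C'_0)^N w_M(x)w_K(y)$ with $M = K = \lceil NH' \rceil \geq N$ (enlarging $M,K$ only enlarges the right-hand side since $w\geq 0$), so \eqref{trans-inv} holds with $C = (C'_0)^N$.

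For $(\Omega)$ of $\mathcal{W}^\circ_{w}$, where $v_N = e^{-Nw}$: given $N$, put $M = N+1$; for $K \geq M$ choose $\theta = 1/(K-N+1) \in (0,1)$, so that $(1-\theta)N + \theta K = N + \theta(K-N) < N+1 = M$. Since $w \geq 0$, the inequality $M \geq (1-\theta)N + \theta K$ yields $e^{-Mw(x)} \leq e^{-((1-\theta)N+\theta K)w(x)} = \bigl(e^{-Nw(x)}\bigr)^{1-\theta}\bigl(e^{-Kw(x)}\bigr)^{\theta}$ for every $x$, i.e.\ $(\Omega)$ holds with constant $C = 1$ (note that $(\alpha)$ is not needed for this part). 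Finally, since \eqref{trans-inv} implies \eqref{locally-bounded-increasing}, Theorem \ref{char-UB-smooth-GS} applies, and the equivalence $(i) \Leftrightarrow (ii)$ there gives that $\mathcal{O}'_C(\dot{\mathcal{B}}_{\mathcal{W}_w})$ is ultrabornological. I do not anticipate a genuine obstacle here: each verification is a one-line computation, and the only points requiring a little care are keeping $\theta$ strictly inside $(0,1)$ in $(\Omega)$ — handled by the slightly generous choice $\theta = 1/(K-N+1)$ — and the bookkeeping with the constants $H', C'_0$ from $(\alpha)$ in \eqref{trans-inv}.
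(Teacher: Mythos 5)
Your proof is correct and is precisely the straightforward verification the authors had in mind (the paper omits the proof as "simple"): \eqref{L1-cond} from $\log(1+t)=O(w(t))$, \eqref{trans-inv} from monotonicity plus $(\alpha)$ applied at $2\max\{|x|,|y|\}$, and $(\Omega)$ for $(e^{-Nw})_N$ by exact interpolation of the linear-in-$N$ exponents, with the concluding appeal to Theorem \ref{char-UB-smooth-GS}. Your observation that $(\alpha)$ is only needed for \eqref{trans-inv} and not for $(\Omega)$ is accurate and consistent with the contrast to Proposition \ref{char-omega-2}, where the nonlinear dependence on $N$ makes $(\Omega)$ equivalent to $(\alpha)$.
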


\begin{example}\label{ex 1 convolutors smooth}For $w(t) = \log(e+t)$ we have that $\mathcal{B}_{\mathcal{W}_w}(\R^d) = \mathcal{S}(\R^d)$. Proposition \ref{char-omega-1} implies that the space of convolutors $\mathcal{O}'_C(\mathcal{S}) = \mathcal{O}'_C(\R^d)$ is ultrabornological. This  was first proved by Grothendieck \cite[Chap.\ II, Thm.\ 16, p.\ 131]{Grothendieck}. He showed  that  $\mathcal{O}'_C(\R^d)$ is isomorphic to a complemented subspace of $s \widehat{\otimes} s'$ and proved that the latter space is ultrabornological. 
\end{example}

\begin{example}\label{exp-example}
 For $w(t) = t$ we have that $\mathcal{B}_{\mathcal{W}_w}(\R^d) = \mathcal{K}_1(\R^d)$, the space of exponentially decreasing smooth functions \cite{Hasumi}. Proposition \ref{char-omega-1} yields that the space of convolutors $\mathcal{O}'_C(\mathcal{K}_1)$ is ultrabornological. This  was  first claimed by Ziele\'{z}ny in \cite{Zielezny} but his argument seems to contain major gaps (in particular, the proof of \cite[Thm.\ 9]{Zielezny} does not seem to be correct). 
\end{example}
\begin{example} 
\label{ex 3 convolutors smooth}
More generally, let $w(t) = t^p$, $p > 0$, and set  $\mathcal{B}_{\mathcal{W}_w}(\R^d) = \mathcal{K}_p(\R^d)$. For $p > 1$ the convolutor spaces $\mathcal{O}'_C(\mathcal{K}_p)$ were studied in \cite{S-Z}, where  the hypoelliptic convolution operators in $(\mathcal{K}_p(\R^d))'$ are characterized in terms of their Fourier transform.
However, the topological properties of $\mathcal{O}'_C(\mathcal{K}_p)$ do not seem to have been studied yet. Proposition \ref{char-omega-1} implies that the space of convolutors $\mathcal{O}'_C(\mathcal{K}_p)$ is ultrabornological for each $p > 0$.
 \end{example}
Next, we treat increasing weight systems of type $\widetilde{\mathcal{W}}_{w}$.

\begin{proposition} \label{char-omega-2}
Let $w$ be a positive continuous increasing function on $[0,\infty)$ with $\log(1 + t) = O(w(t))$ such that
\begin{equation}
2w(t) \leq w(Ht) + \log C_0, \qquad t \geq 0, 
\label{M2like}
\end{equation}
for some $H, C_0 \geq 1$. Then, $\widetilde{\mathcal{W}}_{w}$ satisfies \eqref{L1-cond} and  \eqref{trans-inv}, while $\widetilde{\mathcal{W}}^\circ_{w}$ satisfies $(\Omega)$ if and only if $w$ satisfies $(\alpha)$. Consequently, $\mathcal{O}'_C({\mathcal{B}}_{\widetilde{\mathcal{W}}_{w}})$ is ultrabornological if and only if $w$ satisfies $(\alpha)$.
\end{proposition}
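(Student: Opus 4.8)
The plan is to verify the structural hypotheses \eqref{L1-cond} and \eqref{trans-inv} for $\widetilde{\mathcal{W}}_{w}$ directly, to reduce $(\Omega)$ for $\widetilde{\mathcal{W}}^\circ_{w}$ to a dilation inequality for $w$ by taking logarithms, and then to invoke Theorem \ref{char-UB-smooth-GS}. Two preliminary observations will be used throughout: $\log(1+t)=O(w(t))$ forces $w(t)\to\infty$, and together with \eqref{M2like} this forces $H>1$; iterating \eqref{M2like} gives $w(H^{k}t)\geq 2^{k}w(t)-(2^{k}-1)\log C_{0}$ for all $k\in\N$, $t\geq 0$, while iterating $(\alpha)$ gives, for each $\lambda\geq 1$, an estimate $w(\lambda t)\leq P(\lambda)w(t)+Q(\lambda)$ with $P(\lambda)=(H')^{\lceil\log_{2}\lambda\rceil}$.

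Condition \eqref{trans-inv} is immediate: since $w$ is radial, nonnegative and increasing, $w(N|x+y|)\leq w(2N\max\{|x|,|y|\})\leq w(2N|x|)+w(2N|y|)$, so $M=K=2N$ and $C=1$ work (this step uses neither \eqref{M2like} nor $(\alpha)$). For \eqref{L1-cond}, given $N$ I would set $M=\lceil H^{k}\rceil N$; iterating \eqref{M2like} yields $w(N|x|)-w(M|x|)\leq (2^{k}-1)(\log C_{0}-w(N|x|))$, and since $w(N|x|)\geq w(|x|)\geq c\log(1+|x|)$ for some $c>0$ and large $|x|$, for $k$ large (depending only on $d$ and the $O$-constant) this is $\leq -(d+1)\log(1+|x|)$ for large $|x|$, whence $w_{N}/w_{M}=e^{w(N|\cdot|)-w(M|\cdot|)}\in L^{1}(\R^{d})$.

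The core of the proof is the equivalence of $(\Omega)$ for $\widetilde{\mathcal{W}}^\circ_{w}$ with $(\alpha)$ for $w$. Writing $v_{N}=e^{-w(N|\cdot|)}$ and taking logarithms, $(\Omega)$ becomes
\[
\forall N\ \exists M\geq N\ \forall K\geq M\ \exists\theta\in(0,1)\ \exists C>0\ \forall t\geq 0:\ (1-\theta)w(Nt)+\theta w(Kt)\leq w(Mt)+\log C .
\]
For $(\Omega)\Rightarrow(\alpha)$ I would specialize to $N=1$, take $M$ from $(\Omega)$, and apply the displayed inequality with $K=2M$: dropping the nonnegative term $(1-\theta)w(t)$ and substituting $s=Mt$ yields $w(2s)\leq\theta^{-1}w(s)+\theta^{-1}\log\max\{C,1\}$ for all $s\geq 0$, which is $(\alpha)$. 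The converse $(\alpha)\Rightarrow(\Omega)$ is the main obstacle. Given $N$, put $M=\lceil H\rceil N$, so that the iterated \eqref{M2like} gives the lower bound $w(Mt)-w(Nt)\geq w(Nt)-\log C_{0}$ uniformly in $t$. For $K\geq M$, the iterated $(\alpha)$ gives $w(Kt)-w(Nt)\leq(P_{K}-1)w(Nt)+Q_{K}$ with $P_{K}=P(K/N)$, $Q_{K}=Q(K/N)$; taking $\theta_{K}=(2P_{K})^{-1}\in(0,1)$ makes $\theta_{K}(w(Kt)-w(Nt))\leq\tfrac12 w(Nt)+\tfrac12 Q_{K}$, and taking $C_{K}=C_{0}e^{Q_{K}/2}$ reduces the required inequality to $\tfrac12 w(Nt)\leq w(Nt)$, valid for all $t\geq 0$. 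The subtle point is precisely the division of labour between the two standing hypotheses: $(\Omega)$ needs a $t$-uniform lower bound for $w(Mt)-w(Nt)$, which only \eqref{M2like} provides, together with a $t$-uniform upper bound for $w(Kt)-w(Nt)$ for each fixed $K$, which only $(\alpha)$ provides; one must also keep track that $\theta$ and $C$ are permitted to depend on $K$ and absorb the bounded range of small $t$ into $C$.

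Finally, with \eqref{L1-cond} and \eqref{trans-inv} established, Theorem \ref{char-UB-smooth-GS} gives that $\mathcal{O}'_{C}(\dot{\mathcal{B}}_{\widetilde{\mathcal{W}}_{w}})$ is ultrabornological if and only if $\widetilde{\mathcal{W}}^\circ_{w}$ satisfies $(\Omega)$, hence if and only if $w$ satisfies $(\alpha)$.
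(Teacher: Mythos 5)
Your proposal is correct and follows essentially the same route as the paper: \eqref{trans-inv} from monotonicity, \eqref{L1-cond} from iterating \eqref{M2like} together with $\log(1+t)=O(w(t))$, and the equivalence of $(\Omega)$ with $(\alpha)$ via the logarithmic reformulation, using iterated $(\alpha)$ for the upper bound on $w(Kt)-w(Nt)$ and \eqref{M2like} for the lower bound on $w(Mt)-w(Nt)$, before invoking Theorem \ref{char-UB-smooth-GS}. The only differences are bookkeeping (the paper phrases the reformulated $(\Omega)$ as $w(Kt)-w(t)\leq C(w(Mt)-w(t))+S$ and works with $N=1$, while you track $\theta_K$ and $C_K$ explicitly for general $N$), and your explicit choice of integer indices $M=\lceil H\rceil N$ is in fact slightly more careful than the paper's $M=H$.
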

\begin{proof}  By iterating \eqref{M2like}, we obtain that for every $\lambda > 0$ there are $L, B \geq 1$ 
such that $\lambda w(t) \leq w(Lt) + B$ for all  $t \geq 0$. Condition $\eqref{L1-cond}$ therefore follows from the assumption $\log(1 + t) = O(w(t))$. Next,  \eqref{trans-inv} is a consequence of the fact that $w$ is increasing. We  now show that $\widetilde{\mathcal{W}}^\circ_{w}$ satisfies $(\Omega)$ if and only if $w$ satisfies $(\alpha)$. For this, observe that $\widetilde{\mathcal{W}}^\circ_{w}$ satisfies $(\Omega)$ if and only if there is $M \geq 1$ such that for all $K \geq M$ there are $C, S \geq 1$ such that
\begin{equation}
w(Kt) - w(t) \leq C(w(Mt) - w(t)) + S, \qquad t \geq 0. 
\label{Omega-equiv-2}
\end{equation}
This shows that  $w$ satisfies $(\alpha)$ if  $\widetilde{\mathcal{W}}^\circ_{w}$ satisfies $(\Omega)$. Conversely, suppose that $w$ satisfies $(\alpha)$. We shall show \eqref{Omega-equiv-2} for $M = H$, where $H$ is the constant occurring in \eqref{M2like}. Let $K \geq H$ be arbitrary. By iterating $(\alpha)$, we obtain that there are $L,B \geq 1$ such that
$w(Kt) \leq Lw(t) +  B$ for all $t \geq 0$.
Hence,
$$
w(Kt) - w(t) \leq (L-1) w(t) + B  \leq (L-1)(w(Ht) - w(t)) + (L-1)\log C_0 + B. \qedhere
$$
\end{proof}
\begin{remark}\label{remark-fast}
Since condition $(\alpha)$ implies that $w$ is polynomially bounded, Proposition \ref{char-omega-2} in particular yields that $\mathcal{O}'_C({\mathcal{B}}_{\widetilde{\mathcal{W}}_{w}})$ is not ultrabornological for any function $w$ satisfying \eqref{M2like} that is not polynomially bounded. Concrete examples are given by: $w(t) = \exp(t^\sigma \log^{\tau}(1+ t))$ with $\sigma > 0$ and $\tau \geq 0$ or $\sigma = 0$ and $\tau > 1$. On the other hand, there are also polynomially bounded functions $w$ for which $\mathcal{O}'_C({\mathcal{B}}_{\widetilde{\mathcal{W}}_{w}})$ fails to be ultrabornological, as shown by the following example.
\end{remark}

\begin{example}\label{remark-slow}
\emph{For any $\sigma > 0$ there is a  function $w$ with $\log(1 + t) = O(w(t))$ and $w(t) = O(t^\sigma)$ such that $w$ satisfies \eqref{M2like} but violates $(\alpha)$}. We shall use some results about weight sequences and their associated function; we refer to \cite{Komatsu, B-M-M} for more information on this topic. In \cite[Example 3.3]{Langenbruch-example}, Langenbruch constructed a weight sequence $(M_p)_{p \in \N}$ satisfying $(M.1)$, $(M.2)$ and $(M.3)'$ but not 
$$
\sup_{p \in \N} \frac{m_p}{m_{Qp}} < \infty, \qquad \mbox{for some $Q \in \Z_+$},
$$
where $m_p = M_{p+1}/M_{p}$, $p \in \N$. Hence, the associated function $M$ of $(M_p)_{p \in \N}$ is a weight function such that $\log(1 + t) = o(M(t))$ \cite[p.\ 49]{Komatsu}, $M(t) = o(t)$ \cite[Lemma 4.1]{Komatsu} and $M$ satisfies \eqref{M2like} \cite[Prop.\ 3.6]{Komatsu} but violates $(\alpha)$ \cite[Prop.\ 13]{B-M-M}. Therefore, $w(t) = M(t^\sigma)$ satisfies all requirements. 
\end{example}

We now further specialize Proposition \ref{char-omega-2} to a class of weights introduced by Gelfand and Shilov in \cite[Chap.\ IV, Appendix 2]{G-S}. Let $\mu$ be an increasing positive function on $[0,\infty)$ and consider $w(t) = \int_0^{t} \mu(s) \dss$, $t \geq 0$. We write $\mathcal{K}_w(\R^d) = \mathcal{B}_{\widetilde{\mathcal{W}}_w}(\R^d)$. 
\begin{theorem}\label{thm-UB-GS-2}
 Let $\mu$ be an increasing positive function on $[0,\infty)$ and set $w(t) = \int_0^{t} \mu(s) \dss$. Then, the following statements are equivalent:
 \begin{itemize}
 \item[$(i)$] $\mathcal{O}'_C(\mathcal{K}_w)$ is ultrabornological.
 \item[$(ii)$] $w$ satisfies $(\alpha)$.
 \item[$(iii)$] There are $L, B \geq 1$ such that
\begin{equation}
 \mu(2s) \leq L \mu(s) + \frac{B}{s}, \qquad s > 0.
 \label{mu-cond}
 \end{equation}
 \end{itemize} 
\end{theorem}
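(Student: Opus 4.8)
The plan is to obtain $(i)\Leftrightarrow(ii)$ as an immediate consequence of Proposition~\ref{char-omega-2}, once the hypotheses on $w$ have been checked, and to prove $(ii)\Leftrightarrow(iii)$ by elementary estimates relating $w$ to $\mu$.

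First I would verify that $w(t)=\int_0^t\mu(s)\dss$ meets the assumptions of Proposition~\ref{char-omega-2}. Since $\mu$ is positive and increasing it is locally bounded, so $w$ is continuous and strictly increasing; moreover $w(t)\ge\int_1^t\mu(s)\dss\ge\mu(1)(t-1)$ for $t\ge 1$, which gives $\log(1+t)=O(w(t))$. Monotonicity of $\mu$ also yields $w(2t)-w(t)=\int_t^{2t}\mu(s)\dss\ge t\mu(t)\ge\int_0^t\mu(s)\dss=w(t)$, hence $w(2t)\ge 2w(t)$ for all $t\ge 0$, i.e.\ \eqref{M2like} holds with $H=2$, $C_0=1$. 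Since $\mathcal{K}_w=\dot{\mathcal{B}}_{\widetilde{\mathcal{W}}_w}$, Proposition~\ref{char-omega-2} then shows that $\mathcal{O}'_C(\mathcal{K}_w)$ is ultrabornological exactly when $w$ satisfies $(\alpha)$, which is $(i)\Leftrightarrow(ii)$.

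For $(ii)\Rightarrow(iii)$ I would apply $(\alpha)$ twice to get $w(4s)\le H'^2 w(s)+(H'+1)\log C'_0$ for all $s\ge 0$, and combine this with the two monotonicity bounds $w(4s)\ge\int_{2s}^{4s}\mu(u)\du\ge 2s\,\mu(2s)$ and $w(s)=\int_0^s\mu(u)\du\le s\,\mu(s)$. Dividing the resulting inequality $2s\,\mu(2s)\le H'^2 s\,\mu(s)+(H'+1)\log C'_0$ by $2s$ produces exactly \eqref{mu-cond}, after enlarging the two constants so that $L,B\ge 1$. Conversely, for $(iii)\Rightarrow(ii)$ I would substitute $s=2u$ to write $w(2t)=2\int_0^t\mu(2u)\du$, split the integral at $u=1$, and apply \eqref{mu-cond} on $[1,t]$ to bound $w(2t)\le w(2)+2Lw(t)+2B\log t$ for $t\ge 1$. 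Because $\log t=O(w(t))$ by the linear lower bound on $w$, this gives $w(2t)\le H'w(t)+\log C'_0$ for large $t$, and the estimate extends to all $t\ge 0$ by continuity of $w$ (enlarging $C'_0$ so as to absorb the compact range), so $w$ satisfies $(\alpha)$.

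No step is genuinely difficult; the points that need a little attention are the behaviour near $t=0$ — handled uniformly by the additive constant in $(\alpha)$ together with the continuity of $w$ — and the absorption of the logarithmic term $2B\log t$ coming from $\int_1^t B u^{-1}\du$, for which the linear lower bound $w(t)\ge\mu(1)(t-1)$ is precisely what is required. I would also note that \eqref{mu-cond} cannot be integrated down to $0$, which is why the truncation at $u=1$ is introduced in $(iii)\Rightarrow(ii)$.
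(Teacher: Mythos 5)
Your proposal is correct and follows essentially the same route as the paper: check $\log(1+t)=O(w(t))$ and \eqref{M2like} from the monotonicity of $\mu$, invoke Proposition \ref{char-omega-2} for $(i)\Leftrightarrow(ii)$, and establish $(ii)\Leftrightarrow(iii)$ by elementary integral estimates (applying $(\alpha)$ twice and using $\int_{2s}^{4s}\mu \geq 2s\,\mu(2s)$, $w(s)\leq s\,\mu(s)$ in one direction, and integrating \eqref{mu-cond} in the other). The only cosmetic difference is in $(iii)\Rightarrow(ii)$: the paper writes $w(2t)-w(t)=2\int_{t/2}^{t}\mu(2s)\,{\rm d}s$ so that the $B/s$ term contributes only the constant $2(\log 2)B$, whereas you truncate at $u=1$ and then absorb the resulting $2B\log t$ into $w(t)$ via the linear lower bound --- both are valid.
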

\begin{proof}
Clearly, $\log(1 + t) = O(w(t))$ (in fact,  $t = O(w(t))$). Next, we verify that  $w$ satisfies \eqref{M2like}. Since $\mu$ is increasing, we obtain that
$$
2w(t) \leq \int_0^t \mu(s) \dss +  \int_0^t \mu(s+t) \dss = \int_0^t \mu(s) \dss +  \int_t^{2t} \mu(s) \dss =  \int_0^{2t} \mu(s) \dss = w(2t).
$$
Hence, in view of Proposition \ref{char-omega-2}, it suffices to show that $w$ satisfies $(\alpha)$ if and only if $\mu$ satisfies \eqref{mu-cond}. Suppose first that $\mu$ satisfies \eqref{mu-cond}. Then,
\begin{align*}
&w(2t) - w(t) = \int^{2t}_t \mu(s) \dss =  2\int^t_{t/2} \mu(2s) \dss \leq 2\int^t_{t/2} \left(L \mu(s) + \frac{B}{s}\right)\dss \\
&\leq 2L \int^t_0 \mu(s)\dss + 2(\log2)B = 2Lw(t) + 2(\log2)B.
\end{align*}
Conversely, assume that $w$ satisfies $(\alpha)$. Applying $(\alpha)$ twice, we obtain that
$$
w(4s) -w(2s) \leq (H'-1)w(2s) + \log C'_0 \leq H'(H'-1)w(s) + H' \log C'_0.
$$
Since $\mu$ is increasing, we have that
\begin{align*}
&\mu(2s) = \frac{\mu(2s)}{\log2} \int^{4s}_{2s} \frac{1}{u} \du \leq\frac{1}{\log2} \int^{4s}_{2s} \frac{\mu(u)}{u} \du \leq \frac{1}{2(\log2)s} \int^{4s}_{2s} \mu(u) \du \\
&= \frac{1}{2(\log2)s}(w(4s) - w(2s)) \leq \frac{H'(H'-1)}{2(\log2)s}w(s) + \frac{H'\log C'_0}{2(\log2)s} \\
&= \frac{H'(H'-1)}{2(\log2)s}\int^s_0\mu(u) \du + \frac{H'\log C'_0}{2(\log2)s} \leq \frac{H'(H'-1)\mu(s)}{2(\log2)} + \frac{H'\log C'_0}{2(\log2)s}.
\qedhere
\end{align*}
\end{proof}

\begin{remark}\label{remark-KM}
The topological properties of the convolutor spaces $\mathcal{O}'_C(\mathcal{K}_w)$ were discussed by Abdullah. On \cite[p.\ 179]{Abdullah-2} he states that the spaces $\mathcal{O}'_C(\mathcal{K}_w)$ are always ultrabornological, but he does not provide a proof of this assertion. Theorem \ref{thm-UB-GS-2} shows that his claim is actually false. By Remark \ref{remark-fast}, the space $\mathcal{O}'_C(\mathcal{K}_w)$ is not ultrabornological if $w$ is not polynomially bounded. Furthermore, one can use Example \ref{remark-slow} to construct weights $w$ of the form $w(t) = \int_0^{t} \mu(s) \dss$ with $w(t) = O(t^{1+\sigma})$, for a fixed but arbitrary $\sigma > 0$, such that $w$ does not satisfy $(\alpha)$ (take $\mu$ to be any function satisfying the conditions from Example \ref{remark-slow} and use Theorem  \ref{thm-UB-GS-2}$(iii)$).
\end{remark}

\end{document}